\newcommand{\summ}{\sum\limits}
\newtheorem{theorem}{Theorem}[section]
\newtheorem{corollary}[theorem]{Corollary}
\newtheorem{lemma}[theorem]{Lemma}
\newtheorem{problem}[theorem]{Problem}
\newtheorem{proposition}[theorem]{Proposition}
\newtheorem{question}[theorem]{Question}
\newtheorem{observation}[theorem]{Observation}
\title{\textbf{Boundary and Hearing Independent Broadcasts in \\
Graphs and Trees}}
\author{J. I. Hoepner, G. MacGillivray, and C. M. Mynhardt
\\
Department of Mathematics and Statistics
\\
University of Victoria
\\
PO BOX 1700 STN CSC \\
Victoria, B.C. \\
Canada V8W 2Y2 \\
\small{julesihoepner@gmail.com, kieka@uvic.ca, gmacgill@uvic.ca}}
\date{May 31, 2023}
\begin{document}
\maketitle

\begin{abstract}

A \textit{broadcast} on a connected graph $G$ with vertex set $V(G)$ is a function $f:V(G)\rightarrow \{0, 1, ..., \text{diam}(G)\}$ such that $f(v)\leq e(v)$ (the eccentricity of $v$) for all $v\in V$. A vertex $v$ is said to be \textit{broadcasting} if $f(v)>0$, with the set of all such vertices denoted $V_f^+$. A vertex $u$ \textit{hears} $f$ from $v\in V_f^+$ if $d_G(u, v)\leq f(v)$. The broadcast $f$ is \textit{hearing independent} if no broadcasting vertex hears another. If, in addition, any vertex $u$ that hears $f$ from multiple broadcasting vertices satisfies $f(v)\leq d_G(u, v)$ for all $v\in V_f^+$, the broadcast is said to be \textit{boundary independent.}

The \textit{cost} of $f$ is $\sigma(f)=\sum_{v\in V(G)}f(v)$.
The minimum cost of a maximal boundary independent broadcast on $G$, called the \textit{lower bn-independence number}, is denoted $i_{bn}(G)$. The \textit{lower h-independence number} $i_h(G)$ is defined analogously for hearing independent broadcasts. We prove that $i_{bn}(G)\leq i_h(G)$ for all graphs $G$ and show that $i_h(G)/i_{bn}(G)$ is bounded. For both parameters, we show that the lower bn-independence number (h-independence number) of an arbitrary connected graph $G$ equals the minimum lower bn-independence number (h-independence number) among those of its spanning trees.

We further study the maximum cost of boundary independent broadcasts, denoted $\alpha_{bn}(G)$.
We show $\alpha_{bn}(G)$ can be bounded in terms of the independence number $\alpha(G)$, and  prove that the maximum bn-independent broadcast problem is NP-hard by a reduction from the independent set problem to an instance of the maximum bn-independent
broadcast problem.

With particular interest in caterpillars, we investigate bounds on $\alpha_{bn}(T)$ when $T$ is a tree in terms of its order and the number of vertices of degree at least 3, known as the \textit{branch vertices} of $T$. We conclude by describing a polynomial-time algorithm to determine $\alpha_{bn}(T)$ for a given tree $T$.

\end{abstract}

\noindent \textbf{Keywords:} broadcast domination; broadcast independence; hearing independence; boundary independence

\noindent \textbf{AMS Subject Classification Number 2010:} 05C69

\section{Introduction}

There are several methods by which the concept of independent sets may be generalized to broadcast independence. If we require that no broadcasting vertex hears another, we obtain the definition of \textit{cost independent} broadcasts introduced by Erwin in \cite{erwin}, which we refer to as \textit{hearing independent}, abbreviated h-independent. 
The definition of boundary independent (or bn-independent) broadcasts, in which no broadcasts overlap on edges, was introduced by Neilson \cite{neilsonphd} and Mynhardt and Neilson \cite{mynhardtneilsonboundary} as an alternative to hearing independence. 
We further investigate the lower parameters $i_{h}(G)$ and $i_{bn}(G)$ on general graphs and trees, proving the comparability of these parameters and obtaining an upper bound on the ratio $i_h{(G)}/i_{bn}(G)$.

We present broadcast definitions and known results in Section 2, and preliminary results and observations in Section 3. In Section 4 we determine that for any connected graph $G$, 

\begin{center}
    $i_{bn}(G)=\min\{i_{bn}(T)\,:\,T\text{ is a spanning tree of G}\}$,
\end{center}

\noindent from which we prove that $i_{bn}(G)\leq i_h(G)$ for any graph $G$. As a corollary, by following the proof of the analogous result for the lower boundary independence number, we find that for any connected graph $G$,

\begin{center}
    $i_{h}(G)=\min\{i_{h}(T)\,:\,T\text{ is a spanning tree of G}\}$.
\end{center}

In Section 5 we show that $\frac{i_{h}
(G)}{i_{bn}(G)}\leq \frac{5}{4}$ for all graphs $G$.

In the latter half of this paper, our focus shifts to upper parameter for boundary independence.   
The maximum cost of a boundary independent broadcast on a given graph $G$ is referred to as its \textit{boundary independence number}, denoted $\alpha_{bn}(G)$. For a given integer $k\geq 0$, the problem of determining whether $\alpha_{bn}(G)\geq k$ is called the \textit{maximum bn-independent
broadcast problem}. The \textit{hearing independence number} $\alpha_h(G)$ and the \textit{maximum h-independent
broadcast problem} are defined similarly.

As any boundary independent broadcast is hearing independent, it follows from the definitions that $\alpha_{bn}(G)\leq \alpha_h(G)$ for all graphs $G$. 
In \cite{mynhardtneilsonboundary}, Mynhardt and Neilson showed that $\alpha_h(G)/ \alpha_{bn}(G)<2$, and that this bound is asymptotically best possible. They posed the problem of investigating the ratio $\alpha_{bn}(G)/\alpha(G)$ in \cite{mynhardt2021lowerexact}.

\begin{problem}\label{prob2}
Is it true that $\alpha_{bn}(G)<2\alpha(G)$ for any graph $G$? 
\end{problem}

It was shown in \cite{mynhardtneilsonboundary} that $\alpha_{bn}(G)\leq n-1$ for all graphs $G$ of order $n$, with equality if and only if $G$ is a path or a \textit{generalized spider}, a tree with exactly one vertex of degree greater than 2. 
It is easily observed that ${\alpha(G)\leq n-\delta(G)}$, where $\delta(G)$ denotes the minimum degree among the vertices of $G$. In \cite{mncomparingupper}, Mynhardt and Neilson asked whether a similar inequality existed for the maximum boundary independence number.

\begin{problem}\label{prob3}
Show that $\alpha_{bn}(G)\leq n-\delta(G)$ for any graph $G$ of order $n$. 
\end{problem}

For any tree $T$, the bound in Problem 1.2 follows immediately from the bound $\alpha_{bn}(T)\leq n-1$ and the fact that $\delta(T)=1$.

In Section 6, we show that  $\alpha_{bn}(G)<2\alpha(G)$ for all $G$, solving Problem \ref{prob2}. We further resolve Problem \ref{prob3} by showing that $\alpha_{bn}(G)\leq n-\delta(G)$ for any graph $G$.

In Section 7, by considering a transformation from independent sets to boundary independent broadcasts on graphs, we observe that determining whether $\alpha_{bn}(G)\geq k$ for a given integer $k$ is NP-Complete. 
In Section 8, we investigate the maximum boundary independence number of trees and determine $\alpha_{bn}(G)$ exactly for families of caterpillars. 

We continue our study of maximum boundary independence broadcasts in trees in Section 9. Using a method similar to the proof technique employed by Bessy and Rautenbach in \cite{bessy}, we derive an $O(n^9)$ time algorithm to determine $\alpha_{bn}(T)$ for a given tree $T$.

Open problems and directions for further research are discussed in Section 10.

\section{Definitions and Background}

Erwin \cite{erwin} defined
a \textit{broadcast} on a nontrivial connected graph $G$ as a function $f:V(G)\rightarrow \{0, 1, ..., \text{diam}(G)\}$ such that $f(v)$ is at most the eccentricity $e(v)$ for all vertices $v$. We say a vertex $v$ is \textit{broadcasting} if $f(v)\geq 1$, and that $f(v)$ is the \textit{strength} of $f$ from $v$. 
The \textit{cost} or \textit{weight} of $f$ is $\sigma(f)=\sum_{v\in V(G)}f(v)$. 

Given a broadcast $f$ on $G$ and a broadcasting vertex $v$, a vertex $u$ \textit{hears} $f$ from $v$ if $d_G(u, v)\leq f(v)$. 
We define the $f$-\textit{neighbourhood} of $v$, denoted by $N_f(v)$, as the set of all vertices which hear $f$ from $v$ (including $v$ itself). 

The $f$-\textit{private neighbourhood} of $v$, denoted by $PN_f(v)$, consists of those vertices that hear $f$ only from $v$. The $f$-\textit{boundary} of $v$ is $B_f(v)=\{u\in N_f(v)\,|\,d(u, v)= f(v)\}$. The $f$-\textit{private boundary} $PB_f(v)$ is defined analogously. In particular, $PB_f(v) = PN_f(v)\cap B_f(v)$. 
If $u\in N_f(v)\backslash B_f(v)$, $v$ is said to \textit{overdominate} $u$ by $k$, where $k=f(v)-d_G(u, v)$. A vertex which does not broadcast or hear $f$ from any broadcasting vertex is \textit{undominated}. 

\begin{sloppypar}
Throughout this paper, we partition the set of broadcasting vertices $V_f^+$ into 
${V_f^1=\{v\in V(G)\,|\,f(v)=1\}}$ and $V_f^{++}=\{v\in V(G)\,|\,f(v)>1\}$. We denote the set of undominated vertices by $U_f$. 
A broadcast $f$ is \textit{dominating} if $U_f=\emptyset$. The broadcast domination number, $\gamma_b(G)$, is the minimum weight of such a broadcast. 
An overview of broadcast domination in graphs is given by Henning, MacGillivray, and Yang in \cite{inbook}. 

\end{sloppypar}

We say an edge $e=uv$ \textit{hears} $f$ or \textit{is covered by} $w\in V_f^+$ if $u, v \in N_f(w)$ and at least one endpoint does not lie on the $f$-boundary of $w$. If no such vertex $w$ exists, then $e$ is $uncovered$. The set of uncovered edges is denoted $U_f^E$. 

An \textit{independent set} on a graph $G$ is a set of pairwise nonadjacent vertices. The minimum cardinality of a maximal independent set, called the \textit{independent domination number} of $G$, is denoted $i(G)$. 
A broadcast $f$ is \textit{hearing independent} if $x\notin N_f(v)$ for any $x, v \in V_f^+$. It is \textit{boundary independent} if $N_f(v)\backslash B_f(v)\subseteq PN_f(v)$ for all $v\in V_f^+$.

\begin{figure}[H]
    \centering

\tikzset{every picture/.style={line width=0.75pt}} 

\begin{tikzpicture}[x=0.75pt,y=0.75pt,yscale=-1,xscale=1]

\draw    (130,160) -- (170,160) ;
\draw [shift={(130,160)}, rotate = 0] [color={rgb, 255:red, 0; green, 0; blue, 0 }  ][fill={rgb, 255:red, 0; green, 0; blue, 0 }  ][line width=0.75]      (0, 0) circle [x radius= 3.35, y radius= 3.35]   ;
\draw    (290,160) -- (330,160) ;
\draw [shift={(290,160)}, rotate = 0] [color={rgb, 255:red, 0; green, 0; blue, 0 }  ][fill={rgb, 255:red, 0; green, 0; blue, 0 }  ][line width=0.75]      (0, 0) circle [x radius= 3.35, y radius= 3.35]   ;
\draw    (250,160) -- (290,160) ;
\draw [shift={(250,160)}, rotate = 0] [color={rgb, 255:red, 0; green, 0; blue, 0 }  ][fill={rgb, 255:red, 0; green, 0; blue, 0 }  ][line width=0.75]      (0, 0) circle [x radius= 3.35, y radius= 3.35]   ;
\draw    (210,160) -- (250,160) ;
\draw [shift={(210,160)}, rotate = 0] [color={rgb, 255:red, 0; green, 0; blue, 0 }  ][fill={rgb, 255:red, 0; green, 0; blue, 0 }  ][line width=0.75]      (0, 0) circle [x radius= 3.35, y radius= 3.35]   ;
\draw    (170,160) -- (210,160) ;
\draw [shift={(170,160)}, rotate = 0] [color={rgb, 255:red, 0; green, 0; blue, 0 }  ][fill={rgb, 255:red, 0; green, 0; blue, 0 }  ][line width=0.75]      (0, 0) circle [x radius= 3.35, y radius= 3.35]   ;
\draw    (330,160) -- (370,160) ;
\draw [shift={(330,160)}, rotate = 0] [color={rgb, 255:red, 0; green, 0; blue, 0 }  ][fill={rgb, 255:red, 0; green, 0; blue, 0 }  ][line width=0.75]      (0, 0) circle [x radius= 3.35, y radius= 3.35]   ;
\draw    (170,120) -- (170,160) ;
\draw [shift={(170,120)}, rotate = 90] [color={rgb, 255:red, 0; green, 0; blue, 0 }  ][fill={rgb, 255:red, 0; green, 0; blue, 0 }  ][line width=0.75]      (0, 0) circle [x radius= 3.35, y radius= 3.35]   ;
\draw    (290,120) -- (290,160) ;
\draw [shift={(290,120)}, rotate = 90] [color={rgb, 255:red, 0; green, 0; blue, 0 }  ][fill={rgb, 255:red, 0; green, 0; blue, 0 }  ][line width=0.75]      (0, 0) circle [x radius= 3.35, y radius= 3.35]   ;
\draw    (290,80) -- (290,120) ;
\draw [shift={(290,80)}, rotate = 90] [color={rgb, 255:red, 0; green, 0; blue, 0 }  ][fill={rgb, 255:red, 0; green, 0; blue, 0 }  ][line width=0.75]      (0, 0) circle [x radius= 3.35, y radius= 3.35]   ;
\draw    (370,160) -- (407.65,160) ;
\draw [shift={(410,160)}, rotate = 0] [color={rgb, 255:red, 0; green, 0; blue, 0 }  ][line width=0.75]      (0, 0) circle [x radius= 3.35, y radius= 3.35]   ;
\draw [shift={(370,160)}, rotate = 0] [color={rgb, 255:red, 0; green, 0; blue, 0 }  ][fill={rgb, 255:red, 0; green, 0; blue, 0 }  ][line width=0.75]      (0, 0) circle [x radius= 3.35, y radius= 3.35]   ;
\draw  [draw opacity=0][dash pattern={on 0.84pt off 2.51pt}] (128.32,160.11) .. controls (128.32,160.11) and (128.32,160.11) .. (128.32,160.11) .. controls (128.32,160.11) and (128.32,160.11) .. (128.32,160.11) .. controls (128.32,138.09) and (146.63,120.23) .. (169.22,120.23) .. controls (191.81,120.23) and (210.12,138.09) .. (210.12,160.11) -- (169.22,160.11) -- cycle ; \draw  [dash pattern={on 0.84pt off 2.51pt}] (128.32,160.11) .. controls (128.32,160.11) and (128.32,160.11) .. (128.32,160.11) .. controls (128.32,160.11) and (128.32,160.11) .. (128.32,160.11) .. controls (128.32,138.09) and (146.63,120.23) .. (169.22,120.23) .. controls (191.81,120.23) and (210.12,138.09) .. (210.12,160.11) ;  
\draw  [draw opacity=0][dash pattern={on 0.84pt off 2.51pt}] (210,160) .. controls (210,160) and (210,160) .. (210,160) .. controls (210,115.82) and (245.82,80) .. (290,80) .. controls (334.18,80) and (370,115.82) .. (370,160) -- (290,160) -- cycle ; \draw  [dash pattern={on 0.84pt off 2.51pt}] (210,160) .. controls (210,160) and (210,160) .. (210,160) .. controls (210,115.82) and (245.82,80) .. (290,80) .. controls (334.18,80) and (370,115.82) .. (370,160) ;  
\draw    (414,160) -- (447.65,160) ;
\draw [shift={(450,160)}, rotate = 0] [color={rgb, 255:red, 0; green, 0; blue, 0 }  ][line width=0.75]      (0, 0) circle [x radius= 3.35, y radius= 3.35]   ;

\draw (161,172.4) node [anchor=north west][inner sep=0.75pt]  [font=\small]  {$v_{1}$};
\draw (321,172.4) node [anchor=north west][inner sep=0.75pt]  [font=\small]  {$u$};
\draw (281,172.4) node [anchor=north west][inner sep=0.75pt]  [font=\small]  {$v_{2}$};
\draw (401,172.4) node [anchor=north west][inner sep=0.75pt]  [font=\small]  {$w_1$};
\draw (440,172.4) node [anchor=north west][inner sep=0.75pt]  [font=\small]  {$w_2$};

\end{tikzpicture}

    \caption{A boundary independent broadcast $f$ on a tree. Vertices $v_1$ and $v_2$ broadcast at strengths 1 and 2, respectively. The vertex $v_2$ overdominates $u$ by 1, whereas $w_1$ and $w_2$ are undominated.}
    \label{fig:firstexample}
\end{figure}
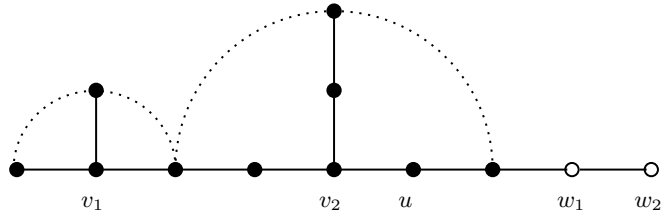

Although efficient broadcast domination was shown to be solvable in polynomial time for
every graph in \cite{heggernes}, the complexity of hearing independence was unknown even for
trees until an efficient algorithm was found by Bessy and Rautenbach in \cite{bessy}. 

\begin{theorem} \textup{\cite{bessy}}
For any tree $T$ of order $n$, $\alpha_h(T)$ can be determined in $O(n^9)$ time. 
\end{theorem}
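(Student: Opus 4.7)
The plan is to design a polynomial-time dynamic programming algorithm on $T$, rooted at an arbitrary vertex $r$. For each vertex $v$ with subtree $T_v$, I would compute the maximum cost of an h-independent broadcast on $T_v$, parameterized by a constant number of integers in $\{0, 1, \ldots, n-1\}$ that capture precisely how broadcasters in $T_v$ must interact with any broadcast placed outside $T_v$.

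The key observation is that the unique path between a broadcaster $u \in T_v$ and any vertex $w$ outside $T_v$ passes through $v$, so $d(u,w) = d(u,v) + d(v,w)$. The hearing-independence condition $d(u,w) > \max(f(u), f(w))$ therefore decomposes into two scalar constraints on the external placement: (i) $d(v,w) > f(u) - d(u,v)$, and (ii) $d(v,w) > f(w) - d(u,v)$. Maximizing and minimizing respectively over broadcasters $u \in T_v$, this means all external interaction is determined by two summary statistics of the subtree:
\[
\alpha(v) \;=\; \max_{u \in T_v,\, f(u) > 0} \bigl(f(u) - d(u,v)\bigr)
\qquad\text{and}\qquad
\gamma(v) \;=\; \min_{u \in T_v,\, f(u) > 0} d(u,v).
\]
Each takes $O(n)$ possible values, so the DP is indexed by triples $(v, \alpha, \gamma)$; in practice the state is enlarged slightly to record, e.g., whether $v$ itself broadcasts and at what strength.

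The recursion at $v$ merges the states of its children one by one into an accumulating state. For two children $v_i, v_j$ of $v$, a pair of broadcasters $u \in T_{v_i}$ and $u' \in T_{v_j}$ satisfies $d(u, u') = d(u,v) + d(u',v)$, so exactly the same two-parameter summary $(\alpha, \gamma)$ of each partial merge suffices to test h-independence across children. Iterating over the $O(n^2)$ parameter tuples for the accumulator and the $O(n^2)$ tuples for the next child, while also choosing the strength of $v$ (a further $O(n)$ factor) and tracking a few auxiliary bookkeeping parameters (such as whether the accumulated subtree already exhibits a broadcaster at $v$ or an inherited overhang passing through $v$), accounts for a polynomial cost per merge. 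Summed across all $O(n)$ edges processed, this yields a total runtime that is $O(n^9)$ after a coarse bound on the state space and merge cost.

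The main obstacle is confirming that no additional state is required beyond $\alpha(v)$ and $\gamma(v)$ to verify cross-subtree h-independence and to recover an optimal broadcast by traceback. This entails a careful case analysis at each merge (broadcasters may appear in both the accumulator and the next child, in only one, or in neither, and the new vertex $v$ may or may not broadcast), together with showing that the worst-case strength $f(w)$ of an external broadcaster can be folded into the summary $\gamma(v)$ without losing optimality. Eccentricities, required for the global constraint $f(v) \leq e(v)$, can be precomputed in $O(n^2)$ time for a tree and do not affect the asymptotic bound.
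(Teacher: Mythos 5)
This theorem is not proved in the paper at all: it is stated as a quoted result of Bessy and Rautenbach \cite{bessy}, so there is no in-paper argument to compare yours against. Judged on its own terms, your central idea is sound. For a broadcaster $u$ inside $T_v$ and a broadcaster $w$ outside, the hearing-independence condition $d(u,w)>\max(f(u),f(w))$ really does split along the cut vertex $v$ into $d(v,w)>f(u)-d(u,v)$ and $f(w)-d(v,w)<d(u,v)$; taking the binding broadcaster in each inequality shows that the cross-constraints between the two sides reduce to $\alpha_{\mathrm{in}}<\gamma_{\mathrm{out}}$ and $\alpha_{\mathrm{out}}<\gamma_{\mathrm{in}}$, where the ``out'' quantities are defined symmetrically, and the analogous reduction holds for two sibling subtrees meeting at their common parent (distances just shift by the two connecting edges). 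Since decreasing $\alpha$ and increasing $\gamma$ only relax the constraints imposed on the complement, a table indexed by $(v,a,g)$ recording the maximum cost subject to $\alpha(v)\le a$ and $\gamma(v)\ge g$ has the optimal-substructure property, and a sequential merge of children enforces exactly the pairwise conditions. Any polynomial bound (yours comes out well below $n^9$) suffices for the stated $O(n^9)$.

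That said, what you have written is a plan rather than a proof: you yourself flag the main obstacle (``confirming that no additional state is required'') and defer precisely the case analysis that constitutes the content of the argument. To complete it you would need to (i) state and prove the interface lemma above, including the degenerate cases where one side contains no broadcaster and the case where $v$ itself broadcasts, which forces every child's overhang $\alpha_i-d(v,v_i)$ to be negative and bounds $f(v)$ by each child's $\gamma_i+1$; (ii) prove the monotonicity that justifies indexing the table by inequalities rather than exact summary values, so that feasibility against an as-yet-unknown exterior can be tested; and (iii) carry out the runtime arithmetic instead of gesturing at ``a coarse bound.'' None of these steps appears problematic, and the design is in the same dynamic-programming spirit as the cited algorithm, but as submitted the proposal asserts the theorem rather than establishing it.
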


\textit{Maximal} independent broadcasts are those for which the broadcast strength cannot be increased at any vertex without violating the independence condition. If $f$ and $g$ are broadcasts on a graph $G$, we say that $g\leq f$ if $g(v)\leq f(v)$ for all $v\in V(G)$. If in addition $g(v)<f(v)$ for some $v$, we write $g<f$. 
A boundary independent broadcast $f$ is \textit{maximal boundary independent} if there exists no boundary independent broadcast $g$ such that $g > f$. Equivalently, $f$ is a maximal boundary independent broadcast if $B_f(v)-PB_f(v)\neq \emptyset$ for all $v\in V_f^+$. Mynhardt and Neilson  \cite{mynhardtneilsonboundary} defined $i_{bn}(G)$ as the minimum weight of a maximal boundary independent broadcast on $G$,  the \textit{lower boundary independence number}. 
The minimum weight of a maximal hearing independent broadcast is denoted $i_h(G)$, or the \textit{lower hearing independence number}.

Hearing independence was further studied by Bessy and Rautenbach \cite{bessy3, bessy2} and by Dunbar et al. \cite{efficient}.
The more recent study of boundary independent broadcasts was continued by Mynhardt and Neilson in \cite{mynhardt2021sharp, mncomparingupper, mynhardt2021lowerexact} and by Marchessalt and Mynhardt in \cite{mynhardtmarchessault2021lower}. 
For terminology and general concepts in graphs theory not defined in this paper, see Chartrand, Lesniak, and Zhang \cite{graphsanddigraphs}.

\section{Preliminaries}

Observe that if a broadcast $f$ is h-independent or bn-independent but not dominating on a graph $G$, then $f$ may be extended to a dominating broadcast $g$ by successively broadcasting at strength 1 from an uncovered vertex in $V(G)$ until no such vertices remain. We state this fact below for reference. 

\begin{observation}
\label{obs:f_is_dominating}
If $f$ is a maximal boundary independent or maximal hearing independent broadcast, then $f$ is dominating. 
\end{observation}

Mynhardt and Neilson extended Observation \ref{obs:f_is_dominating} to a necessary and sufficient condition for a boundary independent broadcast to be maximal boundary independent.

\begin{proposition} \textup{\cite{mynhardtneilsonboundary}}
Let $f$ be a boundary independent broadcast on a connected graph $G$. Then $f$ is maximal bn-independent if and only if $f$ is dominating, and either

\begin{enumerate}
    \item[$(i)$] $|V_f^+|=1$, or
    \item[$(ii)$] $B_f(v)-PB_f(v)\neq \emptyset$ for each $v\in V_f^+$. 
\end{enumerate}
 
\label{prop:necsufbnindependent}
\end{proposition}

It is natural to consider the analogous result for maximal hearing independence.

\begin{proposition}
\label{prop:nec_suf_hearing} Let $f$ be a hearing independent broadcast on a connected graph $G$. Then $f$ is maximal hearing
independent if and only if $f$ is dominating, and either 

\begin{enumerate}
\item[$(i)$] $|V_{f}^{+}|=1$, or

\item[$(ii)$]  for each $v\in V_{f}^{+}$
there exist $u\in B_{f}(v)$ and $w\in V_{f}^{+}-\{v\}$ such that $uw\in E(G)$, i.e., each broadcasting vertex has a vertex on its boundary that is adjacent to another vertex in $V_{f}^{+}$.
\end{enumerate}
\end{proposition}

To illustrate \textit{(ii)}, observe that in Figure \ref{fig:hearingdom}, the dominating h-independent broadcast $f$ cannot be increased at $v$, otherwise the vertex $w\in V_f^+$ adjacent to $u\in B_f(v)$ would hear $f$ from $v$. Similarly, $f$ cannot be increased at either vertex broadcasting at strength~1.

\begin{figure}[H]
    \centering

\tikzset{every picture/.style={line width=0.75pt}} 

\begin{tikzpicture}[x=0.75pt,y=0.75pt,yscale=-1,xscale=1]

\draw    (70,140) -- (110,140) ;
\draw [shift={(70,140)}, rotate = 0] [color={rgb, 255:red, 0; green, 0; blue, 0 }  ][fill={rgb, 255:red, 0; green, 0; blue, 0 }  ][line width=0.75]      (0, 0) circle [x radius= 3.35, y radius= 3.35]   ;
\draw    (110,140) -- (150,140) ;
\draw [shift={(110,140)}, rotate = 0] [color={rgb, 255:red, 0; green, 0; blue, 0 }  ][fill={rgb, 255:red, 0; green, 0; blue, 0 }  ][line width=0.75]      (0, 0) circle [x radius= 3.35, y radius= 3.35]   ;
\draw    (150,140) -- (190,100) ;
\draw [shift={(150,140)}, rotate = 315] [color={rgb, 255:red, 0; green, 0; blue, 0 }  ][fill={rgb, 255:red, 0; green, 0; blue, 0 }  ][line width=0.75]      (0, 0) circle [x radius= 3.35, y radius= 3.35]   ;
\draw    (190,100) -- (230,100) ;
\draw [shift={(190,100)}, rotate = 0] [color={rgb, 255:red, 0; green, 0; blue, 0 }  ][fill={rgb, 255:red, 0; green, 0; blue, 0 }  ][line width=0.75]      (0, 0) circle [x radius= 3.35, y radius= 3.35]   ;
\draw    (350,140) -- (310,180) ;
\draw [shift={(350,140)}, rotate = 135] [color={rgb, 255:red, 0; green, 0; blue, 0 }  ][fill={rgb, 255:red, 0; green, 0; blue, 0 }  ][line width=0.75]      (0, 0) circle [x radius= 3.35, y radius= 3.35]   ;
\draw    (310,100) -- (350,140) ;
\draw [shift={(310,100)}, rotate = 45] [color={rgb, 255:red, 0; green, 0; blue, 0 }  ][fill={rgb, 255:red, 0; green, 0; blue, 0 }  ][line width=0.75]      (0, 0) circle [x radius= 3.35, y radius= 3.35]   ;
\draw    (270,100) -- (310,100) ;
\draw [shift={(270,100)}, rotate = 0] [color={rgb, 255:red, 0; green, 0; blue, 0 }  ][fill={rgb, 255:red, 0; green, 0; blue, 0 }  ][line width=0.75]      (0, 0) circle [x radius= 3.35, y radius= 3.35]   ;
\draw    (230,100) -- (270,100) ;
\draw [shift={(230,100)}, rotate = 0] [color={rgb, 255:red, 0; green, 0; blue, 0 }  ][fill={rgb, 255:red, 0; green, 0; blue, 0 }  ][line width=0.75]      (0, 0) circle [x radius= 3.35, y radius= 3.35]   ;
\draw    (190,180) -- (150,140) ;
\draw [shift={(190,180)}, rotate = 225] [color={rgb, 255:red, 0; green, 0; blue, 0 }  ][fill={rgb, 255:red, 0; green, 0; blue, 0 }  ][line width=0.75]      (0, 0) circle [x radius= 3.35, y radius= 3.35]   ;
\draw    (390,140) -- (350,140) ;
\draw [shift={(390,140)}, rotate = 180] [color={rgb, 255:red, 0; green, 0; blue, 0 }  ][fill={rgb, 255:red, 0; green, 0; blue, 0 }  ][line width=0.75]      (0, 0) circle [x radius= 3.35, y radius= 3.35]   ;
\draw    (250,180) -- (190,180) ;
\draw [shift={(250,180)}, rotate = 180] [color={rgb, 255:red, 0; green, 0; blue, 0 }  ][fill={rgb, 255:red, 0; green, 0; blue, 0 }  ][line width=0.75]      (0, 0) circle [x radius= 3.35, y radius= 3.35]   ;
\draw    (310,180) -- (250,180) ;
\draw [shift={(310,180)}, rotate = 180] [color={rgb, 255:red, 0; green, 0; blue, 0 }  ][fill={rgb, 255:red, 0; green, 0; blue, 0 }  ][line width=0.75]      (0, 0) circle [x radius= 3.35, y radius= 3.35]   ;

\draw (267,82.4) node [anchor=north west][inner sep=0.75pt]  [font=\small]  {$1$};
\draw (347,122.4) node [anchor=north west][inner sep=0.75pt]  [font=\small]  {$1$};
\draw (141,122.4) node [anchor=north west][inner sep=0.75pt]  [font=\small]  {$2$};
\draw (141,146.4) node [anchor=north west][inner sep=0.75pt]  [font=\footnotesize]  {$v$};
\draw (267,106.4) node [anchor=north west][inner sep=0.75pt]  [font=\footnotesize]  {$w$};
\draw (227,106.4) node [anchor=north west][inner sep=0.75pt]  [font=\footnotesize]  {$u$};

\end{tikzpicture}

    \caption{ A dominating maximal h-independent broadcast $f$ with $|V_f^+|=3$.} 
    \label{fig:hearingdom}
\end{figure}
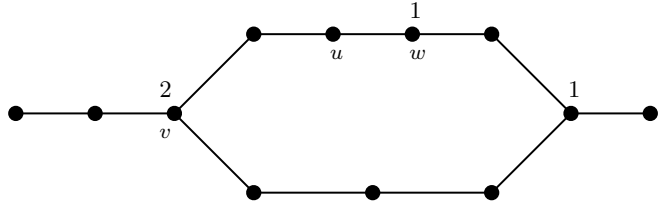

\begin{proof}
Let $f$ be a maximal h-independent broadcast on $G$. By Observation \ref{obs:f_is_dominating}, $f$ is dominating. Suppose there exists $v\in V_f^+$ such that $uw\notin E(G)$ for all $u\in B_f(v)$ and $w\in V_f^+-\{v\}$. Then either $|V_f^+|=1$ (in which case $f(v)=e(v)$, where $e(v)$ denotes the eccentricity of $v$), or we may define a new broadcast $f'$ where $f'(v)=f(v)+1$ and $f'(x)=f(x)$ for all $x\neq v$. Since $f'$ is dominating and no broadcasting vertex hears another, $f'$ is h-independent on $G$, contradicting the maximality of $f$. 

Conversely, let $f$ be a dominating h-independent broadcast such that (i) or (ii) hold. If $|V_f^+|=1$ and $V_f^+=\{v\}$, then $f(v)=e(v)$, otherwise $f$ would not be dominating.
Suppose for a contradiction that $f$ satisfies (ii) and is not maximal h-independent. Then there exists $v\in V_f^+$ such that increasing the strength of the broadcast on $v$ by 1 results in a new h-independent broadcast $f'$. By (ii), since $v\in V_f^+$, there exists $u\in B_f(u)$ adjacent to a broadcasting vertex $w\in V_f^+ -\{v\}$. But then $w\in B_{f'}(v)$, a contradiction. It follows that $f$ is maximal h-independent. 
\end{proof}

From Proposition \ref{prop:nec_suf_hearing}, we derive conditions satisfied by a maximal h-independent broadcast that is also bn-independent.

\begin{corollary}
\label{cor:hearing_no_overlaps}
Let $f$ be a maximal hearing independent broadcast on a connected graph $G$. If $f$ is boundary independent, either $|V_f^+|=1$ or there exists $u\in B_f(v)$ adjacent to a vertex in $V_f^1$ for each $v\in V_f^+$.
\end{corollary}

\begin{proof}

Suppose $f$ is a maximal h-independent broadcast on $G$ such that $f$ is boundary independent; that is, no edge of $G$ hears more than one broadcasting vertex. 

Since $f$ is dominating, by Proposition \ref{prop:nec_suf_hearing} (ii), either $|V_f^+|=1$ or every $v\in V_f^+$ has a vertex $u$ on its $f$-boundary adjacent to another vertex $w\in V_f^+$. In the latter case, if $f(w)\geq 2$, then $w$ overdominates $u$, hence $N_f(v)$ and $N_f(w)$ intersect on an edge. Therefore $f(w)=1$.
\end{proof}

The following results of Marchessault and Mynhardt will be useful throughout this section.
For a path $P$ in a tree $T$, let $d(v, P)$ denote the minimum distance from a vertex $v\in V(T)$ to a vertex on $P$.

\begin{proposition} \textup{\cite{mynhardtmarchessault2021lower}}
Let P be a path in a tree T and let f be a broadcast on T. Let Touch$\,(P)$ denote the set of broadcasting vertices whose $f$-neighbourhoods intersect $P$, and let Off$\,(P)$ denote the remaining broadcasting vertices, that is, those that do not broadcast to any vertex of $P$. Suppose

\begin{center}
    $ \sigma_1 = \summ_{v\in \textup{Touch}(P)}d(v,P) $ and  $ \sigma_2 = \summ_{v\in \textup{Off}(P)}f(v)$.
\end{center}
Then
\begin{enumerate}
  \item $f$ covers at most $2\left (\summ_{v\in \textup{Touch}(P)}f(v)-\sigma_1\right )$ edges of $P$, and
  \item if f covers b edges of P, then $\sigma(f)\geq \lceil \frac{b}{2}\rceil +\sigma_1 + \sigma_2$. 
\end{enumerate}

\label{prop:marchessaultpath}
\end{proposition}

\begin{figure}[H]
    \centering

\tikzset{every picture/.style={line width=0.75pt}} 

\begin{tikzpicture}[x=0.75pt,y=0.75pt,yscale=-1,xscale=1]

\draw    (110,180) -- (150,180) ;
\draw [shift={(110,180)}, rotate = 0] [color={rgb, 255:red, 0; green, 0; blue, 0 }  ][fill={rgb, 255:red, 0; green, 0; blue, 0 }  ][line width=0.75]      (0, 0) circle [x radius= 3.35, y radius= 3.35]   ;
\draw    (230,180) -- (270,180) ;
\draw [shift={(230,180)}, rotate = 0] [color={rgb, 255:red, 0; green, 0; blue, 0 }  ][fill={rgb, 255:red, 0; green, 0; blue, 0 }  ][line width=0.75]      (0, 0) circle [x radius= 3.35, y radius= 3.35]   ;
\draw    (190,180) -- (230,180) ;
\draw [shift={(190,180)}, rotate = 0] [color={rgb, 255:red, 0; green, 0; blue, 0 }  ][fill={rgb, 255:red, 0; green, 0; blue, 0 }  ][line width=0.75]      (0, 0) circle [x radius= 3.35, y radius= 3.35]   ;
\draw    (150,180) -- (190,180) ;
\draw [shift={(150,180)}, rotate = 0] [color={rgb, 255:red, 0; green, 0; blue, 0 }  ][fill={rgb, 255:red, 0; green, 0; blue, 0 }  ][line width=0.75]      (0, 0) circle [x radius= 3.35, y radius= 3.35]   ;
\draw  [dash pattern={on 0.84pt off 2.51pt}]  (190,140) -- (190,180) ;
\draw [shift={(190,140)}, rotate = 90] [color={rgb, 255:red, 0; green, 0; blue, 0 }  ][fill={rgb, 255:red, 0; green, 0; blue, 0 }  ][line width=0.75]      (0, 0) circle [x radius= 3.35, y radius= 3.35]   ;
\draw    (270,180) -- (310,180) ;
\draw [shift={(270,180)}, rotate = 0] [color={rgb, 255:red, 0; green, 0; blue, 0 }  ][fill={rgb, 255:red, 0; green, 0; blue, 0 }  ][line width=0.75]      (0, 0) circle [x radius= 3.35, y radius= 3.35]   ;
\draw    (310,180) -- (350,180) ;
\draw [shift={(350,180)}, rotate = 0] [color={rgb, 255:red, 0; green, 0; blue, 0 }  ][fill={rgb, 255:red, 0; green, 0; blue, 0 }  ][line width=0.75]      (0, 0) circle [x radius= 3.35, y radius= 3.35]   ;
\draw [shift={(310,180)}, rotate = 0] [color={rgb, 255:red, 0; green, 0; blue, 0 }  ][fill={rgb, 255:red, 0; green, 0; blue, 0 }  ][line width=0.75]      (0, 0) circle [x radius= 3.35, y radius= 3.35]   ;
\draw  [dash pattern={on 0.84pt off 2.51pt}]  (270,140) -- (270,180) ;
\draw [shift={(270,140)}, rotate = 90] [color={rgb, 255:red, 0; green, 0; blue, 0 }  ][fill={rgb, 255:red, 0; green, 0; blue, 0 }  ][line width=0.75]      (0, 0) circle [x radius= 3.35, y radius= 3.35]   ;
\draw  [dash pattern={on 0.84pt off 2.51pt}]  (270,100) -- (270,140) ;
\draw [shift={(270,100)}, rotate = 90] [color={rgb, 255:red, 0; green, 0; blue, 0 }  ][fill={rgb, 255:red, 0; green, 0; blue, 0 }  ][line width=0.75]      (0, 0) circle [x radius= 3.35, y radius= 3.35]   ;
\draw    (70,180) -- (110,180) ;
\draw [shift={(70,180)}, rotate = 0] [color={rgb, 255:red, 0; green, 0; blue, 0 }  ][fill={rgb, 255:red, 0; green, 0; blue, 0 }  ][line width=0.75]      (0, 0) circle [x radius= 3.35, y radius= 3.35]   ;

\draw (186,120.4) node [anchor=north west][inner sep=0.75pt]  [font=\small]  {$3$};
\draw (306,160.4) node [anchor=north west][inner sep=0.75pt]  [font=\small]  {$1$};
\draw (266,80.4) node [anchor=north west][inner sep=0.75pt]  [font=\small]  {$2$};
\draw (201,192.4) node [anchor=north west][inner sep=0.75pt]    {$P$};

\end{tikzpicture}

    \caption{Vertices broadcasting to a path $P$ of a tree. Observe that each broadcasting vertex $v$ covers at most $2(f(v)-d(v, P))$ edges of $P$.}
    \label{fig:my_label}
\end{figure}
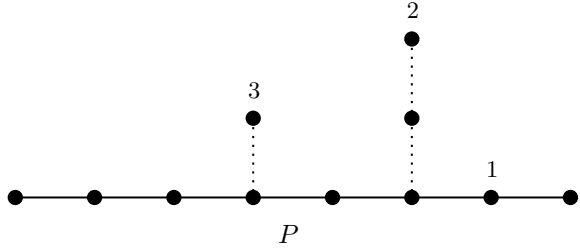

In particular, if $D$ is a diametrical path of a tree $T$ and  $f$ covers every edge of $D$, then $\sigma(f)\geq \text{rad}(T)$.

Recall that $U_f^E$ denotes the set of edges uncovered by a broadcast $f$.

\begin{proposition} \textup{\cite{mynhardtmarchessault2021lower}}
Let $f$ be a bn-independent broadcast on a connected graph $G$ such that $|V_f^+|\geq 2$. Then f is maximal bn-independent if and only if each component of $G-U_f^E$ contains at least two broadcasting vertices. 
\label{prop:marchessault2broadcasts}
\end{proposition}

Note that if each component of $G-U_f^E$ contains at least one broadcasting vertex, then $f$ is dominating, since $G-U_f^E$ is a spanning subgraph of $G$. 

It is clear that the first direction of Proposition \ref{prop:marchessault2broadcasts} must also hold for hearing independence, for if $f$ is maximal h-independent and some component $C$ of $G-U_f^E$ contains only a single broadcasting vertex $v$, all edges between $G-C$ and $B_f(v)$ are uncovered. But then increasing the broadcast strength of $v$ by 1 results in a new h-independent broadcast of greater cost, a contradiction. 

On the other hand, if $f$ is a hearing independent broadcast on a connected graph $G$ such that $|V_f^+|\geq 2$ and all components of $G-U_f^E$ contain at least two broadcasting vertices, $f$ is not necessarily maximal hearing independent as broadcasts may overlap on edges within components. Such a broadcast is illustrated in Figure \ref{fig:componentexample}.

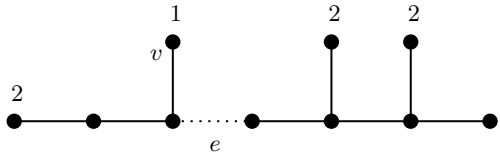
\begin{figure}[H]
    \centering

\tikzset{every picture/.style={line width=0.75pt}} 

\begin{tikzpicture}[x=0.75pt,y=0.75pt,yscale=-1,xscale=1]

\draw    (120,150) -- (160,150) ;
\draw [shift={(120,150)}, rotate = 0] [color={rgb, 255:red, 0; green, 0; blue, 0 }  ][fill={rgb, 255:red, 0; green, 0; blue, 0 }  ][line width=0.75]      (0, 0) circle [x radius= 3.35, y radius= 3.35]   ;
\draw    (160,150) -- (200,150) ;
\draw [shift={(160,150)}, rotate = 0] [color={rgb, 255:red, 0; green, 0; blue, 0 }  ][fill={rgb, 255:red, 0; green, 0; blue, 0 }  ][line width=0.75]      (0, 0) circle [x radius= 3.35, y radius= 3.35]   ;
\draw  [dash pattern={on 0.84pt off 2.51pt}]  (200,150) -- (240,150) ;
\draw [shift={(200,150)}, rotate = 0] [color={rgb, 255:red, 0; green, 0; blue, 0 }  ][fill={rgb, 255:red, 0; green, 0; blue, 0 }  ][line width=0.75]      (0, 0) circle [x radius= 3.35, y radius= 3.35]   ;
\draw    (240,150) -- (280,150) ;
\draw [shift={(240,150)}, rotate = 0] [color={rgb, 255:red, 0; green, 0; blue, 0 }  ][fill={rgb, 255:red, 0; green, 0; blue, 0 }  ][line width=0.75]      (0, 0) circle [x radius= 3.35, y radius= 3.35]   ;
\draw    (280,150) -- (320,150) ;
\draw [shift={(280,150)}, rotate = 0] [color={rgb, 255:red, 0; green, 0; blue, 0 }  ][fill={rgb, 255:red, 0; green, 0; blue, 0 }  ][line width=0.75]      (0, 0) circle [x radius= 3.35, y radius= 3.35]   ;
\draw    (200,110) -- (200,150) ;
\draw [shift={(200,110)}, rotate = 90] [color={rgb, 255:red, 0; green, 0; blue, 0 }  ][fill={rgb, 255:red, 0; green, 0; blue, 0 }  ][line width=0.75]      (0, 0) circle [x radius= 3.35, y radius= 3.35]   ;
\draw    (320,150) -- (360,150) ;
\draw [shift={(360,150)}, rotate = 0] [color={rgb, 255:red, 0; green, 0; blue, 0 }  ][fill={rgb, 255:red, 0; green, 0; blue, 0 }  ][line width=0.75]      (0, 0) circle [x radius= 3.35, y radius= 3.35]   ;
\draw [shift={(320,150)}, rotate = 0] [color={rgb, 255:red, 0; green, 0; blue, 0 }  ][fill={rgb, 255:red, 0; green, 0; blue, 0 }  ][line width=0.75]      (0, 0) circle [x radius= 3.35, y radius= 3.35]   ;
\draw    (320,110) -- (320,150) ;
\draw [shift={(320,110)}, rotate = 90] [color={rgb, 255:red, 0; green, 0; blue, 0 }  ][fill={rgb, 255:red, 0; green, 0; blue, 0 }  ][line width=0.75]      (0, 0) circle [x radius= 3.35, y radius= 3.35]   ;
\draw    (280,110) -- (280,150) ;
\draw [shift={(280,110)}, rotate = 90] [color={rgb, 255:red, 0; green, 0; blue, 0 }  ][fill={rgb, 255:red, 0; green, 0; blue, 0 }  ][line width=0.75]      (0, 0) circle [x radius= 3.35, y radius= 3.35]   ;

\draw (117,130.4) node [anchor=north west][inner sep=0.75pt]  [font=\small]  {$2$};
\draw (317,90.4) node [anchor=north west][inner sep=0.75pt]  [font=\small]  {$2$};
\draw (277,90.4) node [anchor=north west][inner sep=0.75pt]  [font=\small]  {$2$};
\draw (197,90.4) node [anchor=north west][inner sep=0.75pt]  [font=\small]  {$1$};
\draw (217,158.4) node [anchor=north west][inner sep=0.75pt]  [font=\small]  {$e$};
\draw (187,112.4) node [anchor=north west][inner sep=0.75pt]  [font=\small]  {$v$};

\end{tikzpicture}

    \caption{A hearing independent broadcast $f$ such that the removal of the $f$-uncovered edge $e$ leaves two components, each of which contains two broadcasting vertices. As increasing the strength of the broadcast from $v$ by 1 does not result in any broadcasting vertex hearing another, $f$ is not maximal h-independent.}
    \label{fig:componentexample}
\end{figure}

\section{The comparability of $i_{bn}$ and $i_h$}

Two graph parameters $p$ and $q$ are \textit{incomparable} if there exist graphs $G, G'$ for which $p(G)<q(G)$ and $p(G')>q(G')$. We write this as $p \diamond q$. In \cite{mynhardtneilsonboundary}, Mynhardt and Neilson observed that $i\diamond i_{bn}$ and~$i \diamond i_h$.

It is natural to ask whether there exist graphs for which $i_h(G)<i_{bn}(G)$. 
Suppose there exists a graph $G$ with two or more vertices of high degree such that broadcasts from each of these vertices will cover $G$ only if some broadcasts may overlap on edges. Assuming an $i_{bn}$-broadcast cannot be constructed by broadcasting from a single radial vertex, it seems reasonable to imagine a case in which a maximal h-independent broadcast has lower cost than a maximal bn-independent broadcast. 
We proceed to show that this is impossible, solving an open problem posed in \cite{mynhardtmarchessault2021lower}.

\begin{theorem}
\label{theorem:ibnih_gen2}
For any graph $G$, $i_{bn}(G)\leq i_h(G)$.
\end{theorem}

Since the cost of a broadcast is equal to the sum of the costs of the broadcasts on each of its components, it suffices to consider connected graphs. We begin by proving special cases of broadcasts or graphs, including when $G$ is a tree. The proof of Theorem \ref{theorem:ibnih_gen2} is presented in Subsection~4.3.

We first consider the case in which no vertices broadcast at strength greater than 1. 

\begin{proposition}
Let $f$ be a broadcast on $G$ such that $|V_f^{++}|=0$. Then $f$ is maximal boundary independent if and only if it is maximal hearing independent. 
\label{all1sbroadcast}
\end{proposition}

\begin{proof}
Suppose $f$ is a maximal bn-independent or maximal h-independent (and hence dominating) broadcast on $G$ such that $V_f^+=V_f^1$. If $|V_f^+|=1$, then $f$ is both maximal bn-independent and maximal h-independent by part (i) of Propositions \ref{prop:necsufbnindependent} and \ref{prop:nec_suf_hearing}. 

Otherwise, suppose $|V_f^1|\geq 2$ and let $v\in V_f^1$. Then $B_f(v)-PB_f(v)\neq \emptyset$ if and only if $v$ has a vertex on its boundary adjacent to another vertex broadcasting at strength 1, in other words, Propositions \ref{prop:necsufbnindependent} (ii) is equivalent to Proposition \ref{prop:nec_suf_hearing} (ii). Therefore $f$ is both maximal bn-independent and maximal h-independent. 
\end{proof}

\subsection{Trees}

Let $\ell(P)$ denote the length of the path $P$. The following result is a consequence of Proposition \ref{prop:marchessaultpath}, and is stated here for clarity. 

\begin{corollary}
\label{cor:connected_tree_case}
Suppose $f$ is a broadcast on a tree $T$ such that $T-U_f^E$ is connected. Then $\sigma(f) \geq \text{rad}(T)$.
\end{corollary}

\begin{proof}
If $T-U_f^E$ is connected, then every edge of $T$ is covered by $f$, otherwise the removal of an uncovered edge would disconnect the tree. Let $D$ be a diametrical path of $T$. By part 2 of Proposition \ref{prop:marchessaultpath} with $D=P$, $\sigma(f)\geq\left \lceil \frac{\ell(D)}{2}\right \rceil \geq \text{rad}(T)$. 
\end{proof}

As in Proposition \ref{prop:marchessaultpath}, given a path $P$ in a tree, let $Touch(P)$ denote the set of broadcasting vertices whose $f$-neighbourhoods intersect $P$. Recall that for $v\in Touch(P)$, we use $d(v, P)$ to denote the minimum distance from $v$ to a vertex on $P$.

\begin{proposition}
\label{prop:b_plus_k}
Let $P$ be a path of a tree $T$ and let $f$ be a broadcast on $T$. If $f$ covers $b$ edges of $P$, and $k$ edges are covered more
than once, then $\sigma(f)\geq\left\lceil \frac{b+k}{2}\right\rceil $.
\end{proposition}

\begin{proof}
Consider $v\in Touch(P)$ and let $u$ be the vertex on $P$ for which the distance to $v$ is smallest (possibly, $u=v$). Since  $T$ is a tree, there exists a unique $u-v$ path $P_{uv}$ which, by choice of $u$, intersects $P$ only on $u$. Thus, $v$ covers at most $2(f(v)-d(u, v))$ edges of $P$. It follows that 

\begin{center}
    $b+k\leq \summ_{v\in Touch(P)}2(f(v)-d(v, P))\leq \summ_{v\in Touch(P)}2f(v)\leq \summ_{v\in V_f^+}2f(v)$,
\end{center}

\noindent hence $\sigma(f)\geq \frac{b+k}{2}$. As $\sigma(f)$ is an integer, we have that $\sigma(f)\geq \lceil \frac{b+k}{2}\rceil $. \end{proof}

We show next that Theorem \ref{theorem:ibnih_gen2} holds for trees.

\begin{theorem}
For any tree $T$, $i_{bn}(T)\leq i_h(T)$.
\label{theorem:ibnihtrees}
\end{theorem}

\begin{proof}
Suppose $T$ is a tree such that $i_{h}(T)<i_{\mathrm{bn}}(T)$ and let $f$ be
an $i_{h}$-broadcast on $T$. By Corollary \ref{cor:connected_tree_case},  if $f$ covers every edge of $T$, then $\sigma(f)\geq \text{rad}(T)$. 
Since $i_{bn}(T)\leq \text{rad}(T)$, the cost of $f$ must be strictly less than $\text{rad}(T)$, hence some edge of $T$ is uncovered. In particular, $T-U_f^E$ contains at least two components.

Let $T_1, T_2, ..., T_k$ be the components of $T-U_f^E$ and let $f_i$ denote the restriction of $f$ to $T_i$. By Proposition \ref{prop:marchessault2broadcasts}, since $f$ is maximal h-independent, each component $T_i$ contains at least two broadcasting vertices. Hence, if $f_{i}$ is
bn-independent, then Proposition \ref{prop:marchessault2broadcasts} implies that it is maximal bn-independent. Since $i_{h}%
(T)<i_{\mathrm{bn}}(T)$, at least one restricted broadcast $f_{i}$ is not bn-independent. 

Assume without loss of generality that $f_{1}$ is not bn-independent on $T_1$. Then since no edge of $T_1$ is uncovered, at least one edge hears more than one broadcasting vertex. If this edge lies along a diametrical path of $T_1$, then 
$\sigma(f_{1})\geq\left\lceil \frac{\operatorname{diam}(T_{1})+1}%
{2}\right\rceil $
by  Proposition \ref{prop:b_plus_k}. If no edge along the diametrical path is covered by multiple broadcasts, then some vertex off the diametrical path is broadcasting. By part 2 of Proposition \ref{prop:marchessaultpath}, we again have that
$\sigma(f_{1})\geq\left\lceil \frac{\operatorname{diam}(T_{1})}%
{2}\right\rceil+1\geq \left\lceil \frac{\operatorname{diam}(T_{1})+1}%
{2}\right\rceil$. 

Since $T-U_f^E$ has at least two components, for some $i\neq 1$, there exists $y\in V(T_i)$ and $x\in V(T_1)$ such that $xy\in E(T)$. If $\left\lceil \frac{\operatorname{diam}(T_{1})+1}%
{2}\right\rceil > \text{rad}(T_1)$, then $d(c, x)\leq \text{rad}(T_1)< \sigma(f_1)$ for any central vertex $c$ of $T_1$.
In the case where $\left\lceil \frac{\operatorname{diam}(T_{1})+1}%
{2}\right\rceil =\text{rad}(T_1)$, such as illustrated in Figure $\ref{fig:odd_diam_tree}$, $\text{diam}(T_1)$ is odd, and so we may choose a central vertex $c$ such that $d(c, x)<\text{rad}(T_1)$. 
Let 
$g_1$ be the broadcast on $T$ defined by $g_{1}=(f-f_{1})\cup\{(c, \sigma(f_1))\}$.
Observe that
$V(T_{1})\subseteq N_{g_{1}}(c)$ and, by choice of $c$, the vertex $y$ hears $g_1$ from $c$. 

\begin{figure}[H]
    \centering

\tikzset{every picture/.style={line width=0.75pt}} 

\begin{tikzpicture}[x=0.75pt,y=0.75pt,yscale=-1,xscale=1]

\draw    (120,150) -- (160,150) ;
\draw [shift={(120,150)}, rotate = 0] [color={rgb, 255:red, 0; green, 0; blue, 0 }  ][fill={rgb, 255:red, 0; green, 0; blue, 0 }  ][line width=0.75]      (0, 0) circle [x radius= 3.35, y radius= 3.35]   ;
\draw    (280,150) -- (320,150) ;
\draw [shift={(280,150)}, rotate = 0] [color={rgb, 255:red, 0; green, 0; blue, 0 }  ][fill={rgb, 255:red, 0; green, 0; blue, 0 }  ][line width=0.75]      (0, 0) circle [x radius= 3.35, y radius= 3.35]   ;
\draw    (240,150) -- (280,150) ;
\draw [shift={(240,150)}, rotate = 0] [color={rgb, 255:red, 0; green, 0; blue, 0 }  ][fill={rgb, 255:red, 0; green, 0; blue, 0 }  ][line width=0.75]      (0, 0) circle [x radius= 3.35, y radius= 3.35]   ;
\draw    (200,150) -- (240,150) ;
\draw [shift={(200,150)}, rotate = 0] [color={rgb, 255:red, 0; green, 0; blue, 0 }  ][fill={rgb, 255:red, 0; green, 0; blue, 0 }  ][line width=0.75]      (0, 0) circle [x radius= 3.35, y radius= 3.35]   ;
\draw    (160,148.5) -- (200,148.5)(160,151.5) -- (200,151.5) ;
\draw [shift={(160,150)}, rotate = 0] [color={rgb, 255:red, 0; green, 0; blue, 0 }  ][fill={rgb, 255:red, 0; green, 0; blue, 0 }  ][line width=0.75]      (0, 0) circle [x radius= 3.35, y radius= 3.35]   ;
\draw  [dash pattern={on 0.84pt off 2.51pt}]  (320,150) -- (360,150) ;
\draw [shift={(360,150)}, rotate = 0] [color={rgb, 255:red, 0; green, 0; blue, 0 }  ][fill={rgb, 255:red, 0; green, 0; blue, 0 }  ][line width=0.75]      (0, 0) circle [x radius= 3.35, y radius= 3.35]   ;
\draw [shift={(320,150)}, rotate = 0] [color={rgb, 255:red, 0; green, 0; blue, 0 }  ][fill={rgb, 255:red, 0; green, 0; blue, 0 }  ][line width=0.75]      (0, 0) circle [x radius= 3.35, y radius= 3.35]   ;
\draw    (110,110) -- (120,150) ;
\draw [shift={(110,110)}, rotate = 75.96] [color={rgb, 255:red, 0; green, 0; blue, 0 }  ][fill={rgb, 255:red, 0; green, 0; blue, 0 }  ][line width=0.75]      (0, 0) circle [x radius= 3.35, y radius= 3.35]   ;
\draw    (130,110) -- (120,150) ;
\draw [shift={(130,110)}, rotate = 104.04] [color={rgb, 255:red, 0; green, 0; blue, 0 }  ][fill={rgb, 255:red, 0; green, 0; blue, 0 }  ][line width=0.75]      (0, 0) circle [x radius= 3.35, y radius= 3.35]   ;
\draw    (240,110) -- (240,150) ;
\draw [shift={(240,110)}, rotate = 90] [color={rgb, 255:red, 0; green, 0; blue, 0 }  ][fill={rgb, 255:red, 0; green, 0; blue, 0 }  ][line width=0.75]      (0, 0) circle [x radius= 3.35, y radius= 3.35]   ;
\draw    (240,70) -- (240,110) ;
\draw [shift={(240,70)}, rotate = 90] [color={rgb, 255:red, 0; green, 0; blue, 0 }  ][fill={rgb, 255:red, 0; green, 0; blue, 0 }  ][line width=0.75]      (0, 0) circle [x radius= 3.35, y radius= 3.35]   ;
\draw    (40,150) -- (80,150) ;
\draw [shift={(40,150)}, rotate = 0] [color={rgb, 255:red, 0; green, 0; blue, 0 }  ][fill={rgb, 255:red, 0; green, 0; blue, 0 }  ][line width=0.75]      (0, 0) circle [x radius= 3.35, y radius= 3.35]   ;
\draw    (80,150) -- (120,150) ;
\draw [shift={(80,150)}, rotate = 0] [color={rgb, 255:red, 0; green, 0; blue, 0 }  ][fill={rgb, 255:red, 0; green, 0; blue, 0 }  ][line width=0.75]      (0, 0) circle [x radius= 3.35, y radius= 3.35]   ;
\draw    (362.02,151.21) -- (410,180) ;
\draw [shift={(410,180)}, rotate = 30.96] [color={rgb, 255:red, 0; green, 0; blue, 0 }  ][fill={rgb, 255:red, 0; green, 0; blue, 0 }  ][line width=0.75]      (0, 0) circle [x radius= 3.35, y radius= 3.35]   ;
\draw [shift={(360,150)}, rotate = 30.96] [color={rgb, 255:red, 0; green, 0; blue, 0 }  ][line width=0.75]      (0, 0) circle [x radius= 3.35, y radius= 3.35]   ;
\draw    (420,90) -- (361.66,148.34) ;
\draw [shift={(360,150)}, rotate = 135] [color={rgb, 255:red, 0; green, 0; blue, 0 }  ][line width=0.75]      (0, 0) circle [x radius= 3.35, y radius= 3.35]   ;
\draw [shift={(420,90)}, rotate = 135] [color={rgb, 255:red, 0; green, 0; blue, 0 }  ][fill={rgb, 255:red, 0; green, 0; blue, 0 }  ][line width=0.75]      (0, 0) circle [x radius= 3.35, y radius= 3.35]   ;

\draw (171,172.4) node [anchor=north west][inner sep=0.75pt]    {$T_{1}$};
\draw (389,142.4) node [anchor=north west][inner sep=0.75pt]    {$T_{i}$};
\draw (127,133.4) node [anchor=north west][inner sep=0.75pt]  [font=\small]  {$2$};
\draw (245,133.4) node [anchor=north west][inner sep=0.75pt]  [font=\small]  {$2$};
\draw (197,154.4) node [anchor=north west][inner sep=0.75pt]  [font=\small]  {$c$};
\draw (351,154.4) node [anchor=north west][inner sep=0.75pt]  [font=\small]  {$y$};
\draw (311,154.4) node [anchor=north west][inner sep=0.75pt]  [font=\small]  {$x$};

\end{tikzpicture}

    \caption{A component $T_1$ with $\sigma(f_1)=\text{rad}(T_1)=4$. A broadcast of strength 4 from $c$ covers the edge $xy$ joining $T_1$ to $T_i$.}
    \label{fig:odd_diam_tree}
\end{figure}
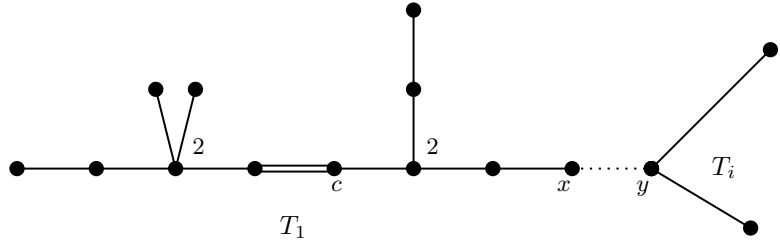

Let $G_{2}$ be the component of $T-U_{g_{1}}^{E}$
that contains $T_{1}$. As $y$ hears $g_1$ from a vertex in $T_1$, $G_2$ must also contain the component of $T-U_f^E$ containing $y$. It follows that $T-U_{g_1}^E$ has fewer components than $T-U_f^E$.

Let $h_{2}$ be the restriction of $g_{1}$ to $G_{2}$. Note that $h_{2}$ covers all edges of $G_{2}$, and all other components of
$T-U_{g_{1}}^{E}$ are trees $T_{i}$. 
In particular, each component of $T-U_{g_{1}}^{E}$
contains at least two vertices of $V_{g_{1}}^{+}$. Since $\sigma
(g_{1})=\sigma(f)<i_{\mathrm{bn}}(T)$, $g_{1}$ is not bn-independent, and so there exists at least one component of $T-U_{g_{1}}^{E}$ containing an edge that
hears two broadcasting vertices. Assume without loss of generality that $G_{2}$ contains such an edge.
Repeating the process, we again have that $\sigma(h_{2})\geq\left\lceil \frac{\operatorname{diam}(G_{2})+1}%
{2}\right\rceil $. 

As $T$ is finite, we may repeat the process until we eventually
obtain a broadcast $g_{\ell}$ on $T$ such that, for some central vertex $c$ of
$T$, $\sigma(g_{\ell})=g_{\ell}(c)=\sigma(f)=\operatorname{rad}(T)$, 
a contradiction. \end{proof}

\subsection{Spanning trees}

Our aim in this subsection is to show that the lower broadcast independent domination number of a graph $G$ is given by the minimum of this parameter among all spanning trees of $G$.
Recall that a vertex of $G$ is \textit{peripheral} if its eccentricity equals the diameter of $G$. Two peripheral vertices $p$ and $q$ are said to be \textit{antipodal} if $d(p, q)=\text{diam}(G)$.  

\begin{proposition}

\label{Prop_Rad}Let $f$ be a dominating broadcast on a connected graph $G$. If
$G-U_f^E$ is
connected, then $\sigma(f)\geq\operatorname{rad}(G)$. If, in addition, an edge
of $G$ hears a broadcast from more than one vertex, then for any
peripheral vertex $p$ of
$G$, there exists a dominating broadcast $g_{p}$ on
$G$ such that $\sigma(g_{p})=\sigma(f)$, $|V_{g_{p}}^{+}|=1$ and
$g_{p}$ overdominates $p$.

\end{proposition}

\begin{proof}
If $f$ is a dominating broadcast such that $|V_f^+|=1$, then $\sigma(f)\geq \text{rad}(G)$, 
so suppose $|V_{f}^{+}|\geq2$. Suppose $H_0=G-U_f^E$ is connected and let $p$ be an arbitrarily chosen peripheral vertex of $G$. Our goal is to define a sequence of equal-cost dominating broadcasts $f_0=f, f_1, f_2, ..., f_k=g_p$ on $G$ such that $|V_{f_{1}}^{+}|>|V_{f_{2}}^{+}|>\cdots>|V_{f_{k}}^{+}|=1$.

By careful construction of broadcasts, we will ensure that if an edge of $G$ hears $f$ from more than one broadcasting vertex, then some broadcast $f_i$ in the sequence overdominates $p$. Furthermore, we will show that if $p$ is overdominated in $f_i$, then $p$ is overdominated in $f_j$ for all $i\leq j\leq k$, such that the resulting radial broadcast $f_k$ overdominates $p$.

Let $u,w\in V_{f}^{+}$ such that $N_{f}(u)\cap N_{f}(w)\neq\emptyset$ and
let $P_{uw}$ be a $u-w$ geodesic in $H_0$. Then
$\ell(P_{uw})\leq f(u)+f(w)$. If an edge hears $f$ from both $u$ and
$w$, then $P_{uw}$ contains such an edge, in which case $\ell(P_{uw})<f(u)+f(w)$. 

If $p\in N_{f}(u)\cup N_{f}(w)$, assume without loss of generality that $p\in N_f(w)$.
Otherwise, if $p\notin N_{f}(u)\cup N_{f}(w)$, then $|V_f^+|=3$ since $f$ is dominating. Since $H_0$ is connected, some
vertex in $x\in N_{f}(u)\cup N_{f}(w)$ hears a vertex $y\in V_{f}^{+}%
\setminus\{u,w\}$. Assume without loss of generality that $x\in
N_{f}(w)$.

Let $b_1$ be the vertex on $P_{uw}$ at distance $f(w)$ from $u$ (and hence distance at most $f(u)$ from $w$). Observe that if $\ell(P_{uw})<f(u)+f(w)$, then $b_1$ is at distance at most $f(u)-1$ from $w$. 
Define the broadcast $f_{1}$ by 
\[
f_{1}(v)=\left\{
\begin{tabular}
[c]{ll}%
$f(v)$ & if $v\in V(G)\setminus\{u,w,b_{1}\}$\\
$0$ & if $v\in\{u,w\}$\\
$f(u)+f(w)$ & if $v=b_{1}.$%
\end{tabular}
\right.
\]

Clearly, $\sigma(f_{1})=\sigma(f)$. Since $f$ is dominating, to prove $f_{1}$ is dominating, it suffices to
show that each vertex in $N_{f}(u)\cup N_{f}(w)$ hears $f_{1}$.
For any $v\in N_{f}(u)$,
\[
d(v,b_{1})\leq d(v,u)+d(u,b_{1})\leq f(u)+f(w)=f_{1}(b_{1}),
\]
\noindent hence $v$ hears $f_{1}$ from $b_{1}$. Similarly, any vertex in $N_{f}(w)$
hears $b_{1}$. 

Let $H_1=G-U_{f_1}^E$. Note that $f_{1}$ covers all edges of $H_{1}$, 
and if $v\in H_0$ is overdominated by $f$, then $v$ is overdominated by $f_1$ in $H_1$.

We repeat the above procedure. At each step $i$, we define a dominating broadcast $f_i$ and a graph $H_i=G-U_{f_i}^E$ such that $\sigma(f_i)=\sigma(f_{i-1})$,  $|V_{f_i}^+|<|V_{f_{i-1}}^+|$, 
and if a vertex $v$ is overdominated under $f_{i-1}$, then $v$ is overdominated in $f_i$. Finally, if $H_{i-1}$ is connected, so is $H_i$. Thus, if $|V_{f_i}^+|\geq 2$, we may always find broadcasting vertices $u, w\in V_{f_i}^+$ such that $f_i(u)+f_i(w)\leq d(u, w)$. 

Let $k$ be the first $i$ such that $|V_{f_k}^+|=1$ and consider $f_{k-1}$. Let $V_{f_{k-1}}^+=\{u, w\}$, and assume without loss of generality that $p\in N_f(w)$. If an edge of $H_{k-1}$ hears $f_{k-1}$ from more than one broadcasting vertex, then such an edge lies along a $u-w$ geodesic $P_{uw}$. By definition of $f_k$, 

\begin{center}
    $d(p, b_k)\leq d(p, w)+d(w, b_k)<f_{k-1}(w)+f_{k-1}(u)=f_k(b_k)$,
\end{center}

\noindent hence $p$ is overdominated by $b_k$.

Let $g_p=f_k$. Since $f_{k}$ dominates $H_k$, and since $H_k$ is a connected spanning subgraph of $G$, we have that 
\[
\sigma(f)=\sigma(f_{k})\geq \text{rad}(H_k)\geq
\operatorname{rad}(G),
\]

\noindent therefore $g_p$ is a dominating broadcast on $G$. Furthermore, if an edge of $G$ hears $f$ from more than one broadcasting vertex, then $g_p$ overdominates $p$. 
\end{proof}

We proceed to show that the lower boundary independence number of an arbitrary connected graph $G$ equals the minimum lower boundary independence number among those of its spanning trees.

\begin{theorem}
\label{theorem:spanning_trees_ibn}
For any connected graph $G$,
\begin{center}
    $i_{bn}(G)=\min\{i_{bn}(T)\,:\,T\text{ is a spanning tree of G}\}$.
\end{center}
\end{theorem}

\begin{proof}

Suppose there exists a tree $T$ spanning $G$ such that ${i_{\mathrm{bn}}(T)<i_{\mathrm{bn}}(G)}$, and let $f$ be an $i_{\mathrm{bn}}%
$-broadcast on $T$. 

If $|V_f^+|=1$, by the minimality of $i_{bn}(T)$, $f(v)=\text{rad}(T)=\sigma(f)$ for some central vertex $v$. Therefore $i_{bn}(T)=\text{rad}(T)\geq \text{rad}(G)\geq i_{bn}(G)$, a contradiction. Suppose instead that $|V_f^+|\geq 2$ and the set of uncovered edges $U_f^E$ is nonempty. Then $T-U_f^E$ is disconnected. By Proposition \ref{prop:marchessault2broadcasts}, every component of $T-U_f^E$ contains at least two broadcasting vertices.

Since $f$ is maximal, $f$ is a dominating broadcast on $G$. However, since $i_{bn}(T)<i_{bn}(G)$, $f$ is not bn-independent on $G$, and so $G$ contains an edge that hears $f$ from more than one broadcasting vertex. 

Arbitrarily add edges of $G-T$ to $T$ until the addition of some edge, say $e_1$, results in a spanning subgraph $G_1$ of $G$ such that $f$ is not bn-independent on $G_1$. 
Let $H_{1}$ be the component of $G_{1}-U_{f}^{E}$ that contains $e_{1}$, and let
$h_{1}$ be the restriction of $f$ to $H_{1}$. By Proposition \ref{Prop_Rad}, there exists a dominating broadcast $g_{1}$ on $H_{1}$ such that $\sigma(g_1)=\sigma(h_1)$ and $V_{g_1}^+=\{b_1\}$ for some $b_1\in V(H_1)$. In particular, if $G_1-U_f^E$ contains more than one component, $g_1$ may be constructed such that $b_1$ covers an edge joining $H_1$ to different component. 

Define a new broadcast $f_1$ on $G$ by 

\[
f_{1}(x)=\left\{
\begin{tabular}
[c]{rl}%
$f(x)$ & if $x\in V(G)\setminus V(H_1)$
\\
$0$ & if $x\in V(H_1) -b_1$
\\
$\sigma(g_1)$ & if $x=b_{1}.$%
\end{tabular}
\right.
\]

Then $\sigma(f_1)=\sigma(f)$, and $G_1-U_{f_1}^E$ has fewer components than $G_1-U_{f}^E$. If $f_1$ is bn-independent on $G_1$, since $\sigma(f_1)=\sigma(f)<i_{bn}(G)$, we may continue adding edges of $G-G_1$ to $G_1$ until the addition of $e_2$ results in a spanning subgraph $G_2$ of $G$ such that $f_1$ is not bn-independent on $G_2$. 

By Proposition \ref{Prop_Rad}, we may repeat the process until we obtain a broadcast $f_k$ such that $\sigma(f)=\sigma(f_k)$ and $G-U_{f_k}^E$ has only one component.
Since the resulting broadcast is dominating and the spanning subgraph of $G$ induced by the set of covered edges is connected, there exists a dominating broadcast $f'$ on $G$ such that $\sigma(f')=\text{rad}(G)$ and $|V_{f'}^+|=1$. But then $f'$ is maximal bn-independent on $G$, a contradiction.

It remains to show that there exists a tree $T$ spanning $G$ such that $i_{bn}(T)=i_{bn}(G)$. Let $f$ be an $i_{\mathrm{bn}}$-broadcast on $G$ and suppose $V_{f}^{+}=\{v_{1},...,v_{k}\}$. For $i=1,...,k$, consider the subgraph
$G_{i}$ of $G$ induced by $N_{f}(v_{i})$. If $G_i$ is acyclic, let $T_i=G_i$; otherwise, successively delete edges from cycles lying in $N_f(v_i)$ to obtain a spanning tree $T_i$. For each such cycle, we may always choose an edge furthest from $v_i$ such that the remaining edges are covered by $v_i$. Thus, the restriction of $f$ to
$T_{i}$, denoted $f_i$, covers all edges of $T_{i}$. Let $H$ be the subgraph of $G$ induced by
$\bigcup_{i=1}^{k}E(T_{i})$. 

Suppose $H$ contains a cycle $C$. By construction, the edges of $C$ are covered by a set of broadcasting vertices $V_C\subseteq V_f^+$ such that $|V_C|\geq 2$. Observe that each $v_i\in V_C$ covers an even number of edges on $C$. In particular, there exist $v_i, v_j\in V_C$ such that $B_f(v_i)\cap  B_f(v_j)$ contains a vertex $b\in V(C)$. Let $a$ be the vertex on $C$ adjacent to $b$ in $T_i$ and let $H_1 = H-ab$. Since $B_f(v_i)\cap C$ contains at least two vertices, there exists $b'\in V(B_f(v_i)\cap C)\setminus \{b\}$ lying on the boundary of another broadcasting vertex in $H_1$. The same holds for $v_j$. Thus, $f$ is maximal bn-independent on $H_1$.

\begin{figure}[H]
    \centering

\tikzset{every picture/.style={line width=0.75pt}} 

\begin{tikzpicture}[x=0.75pt,y=0.75pt,yscale=-1,xscale=1]

\draw    (170,210) -- (200,180) ;
\draw [shift={(170,210)}, rotate = 315] [color={rgb, 255:red, 0; green, 0; blue, 0 }  ][fill={rgb, 255:red, 0; green, 0; blue, 0 }  ][line width=0.75]      (0, 0) circle [x radius= 3.35, y radius= 3.35]   ;
\draw    (140,180) -- (170,210) ;
\draw [shift={(140,180)}, rotate = 45] [color={rgb, 255:red, 0; green, 0; blue, 0 }  ][fill={rgb, 255:red, 0; green, 0; blue, 0 }  ][line width=0.75]      (0, 0) circle [x radius= 3.35, y radius= 3.35]   ;
\draw    (140,140) -- (140,180) ;
\draw [shift={(140,140)}, rotate = 90] [color={rgb, 255:red, 0; green, 0; blue, 0 }  ][fill={rgb, 255:red, 0; green, 0; blue, 0 }  ][line width=0.75]      (0, 0) circle [x radius= 3.35, y radius= 3.35]   ;
\draw    (200,100) -- (170,70) ;
\draw [shift={(200,100)}, rotate = 225] [color={rgb, 255:red, 0; green, 0; blue, 0 }  ][fill={rgb, 255:red, 0; green, 0; blue, 0 }  ][line width=0.75]      (0, 0) circle [x radius= 3.35, y radius= 3.35]   ;
\draw  [dash pattern={on 4.5pt off 4.5pt}]  (170,70) -- (140,100) ;
\draw [shift={(170,70)}, rotate = 135] [color={rgb, 255:red, 0; green, 0; blue, 0 }  ][fill={rgb, 255:red, 0; green, 0; blue, 0 }  ][line width=0.75]      (0, 0) circle [x radius= 3.35, y radius= 3.35]   ;
\draw    (140,100) -- (140,140) ;
\draw [shift={(140,100)}, rotate = 90] [color={rgb, 255:red, 0; green, 0; blue, 0 }  ][fill={rgb, 255:red, 0; green, 0; blue, 0 }  ][line width=0.75]      (0, 0) circle [x radius= 3.35, y radius= 3.35]   ;
\draw    (200,180) -- (200,140) ;
\draw [shift={(200,180)}, rotate = 270] [color={rgb, 255:red, 0; green, 0; blue, 0 }  ][fill={rgb, 255:red, 0; green, 0; blue, 0 }  ][line width=0.75]      (0, 0) circle [x radius= 3.35, y radius= 3.35]   ;
\draw    (200,140) -- (200,100) ;
\draw [shift={(200,140)}, rotate = 270] [color={rgb, 255:red, 0; green, 0; blue, 0 }  ][fill={rgb, 255:red, 0; green, 0; blue, 0 }  ][line width=0.75]      (0, 0) circle [x radius= 3.35, y radius= 3.35]   ;
\draw    (320,210) -- (350,180) ;
\draw [shift={(320,210)}, rotate = 315] [color={rgb, 255:red, 0; green, 0; blue, 0 }  ][fill={rgb, 255:red, 0; green, 0; blue, 0 }  ][line width=0.75]      (0, 0) circle [x radius= 3.35, y radius= 3.35]   ;
\draw    (290,180) -- (320,210) ;
\draw [shift={(290,180)}, rotate = 45] [color={rgb, 255:red, 0; green, 0; blue, 0 }  ][fill={rgb, 255:red, 0; green, 0; blue, 0 }  ][line width=0.75]      (0, 0) circle [x radius= 3.35, y radius= 3.35]   ;
\draw    (290,140) -- (290,180) ;
\draw [shift={(290,140)}, rotate = 90] [color={rgb, 255:red, 0; green, 0; blue, 0 }  ][fill={rgb, 255:red, 0; green, 0; blue, 0 }  ][line width=0.75]      (0, 0) circle [x radius= 3.35, y radius= 3.35]   ;
\draw    (350,100) -- (320,70) ;
\draw [shift={(350,100)}, rotate = 225] [color={rgb, 255:red, 0; green, 0; blue, 0 }  ][fill={rgb, 255:red, 0; green, 0; blue, 0 }  ][line width=0.75]      (0, 0) circle [x radius= 3.35, y radius= 3.35]   ;
\draw  [dash pattern={on 4.5pt off 4.5pt}]  (320,70) -- (290,100) ;
\draw [shift={(320,70)}, rotate = 135] [color={rgb, 255:red, 0; green, 0; blue, 0 }  ][fill={rgb, 255:red, 0; green, 0; blue, 0 }  ][line width=0.75]      (0, 0) circle [x radius= 3.35, y radius= 3.35]   ;
\draw    (290,100) -- (290,140) ;
\draw [shift={(290,100)}, rotate = 90] [color={rgb, 255:red, 0; green, 0; blue, 0 }  ][fill={rgb, 255:red, 0; green, 0; blue, 0 }  ][line width=0.75]      (0, 0) circle [x radius= 3.35, y radius= 3.35]   ;
\draw    (350,180) -- (350,140) ;
\draw [shift={(350,180)}, rotate = 270] [color={rgb, 255:red, 0; green, 0; blue, 0 }  ][fill={rgb, 255:red, 0; green, 0; blue, 0 }  ][line width=0.75]      (0, 0) circle [x radius= 3.35, y radius= 3.35]   ;
\draw    (350,140) -- (350,100) ;
\draw [shift={(350,140)}, rotate = 270] [color={rgb, 255:red, 0; green, 0; blue, 0 }  ][fill={rgb, 255:red, 0; green, 0; blue, 0 }  ][line width=0.75]      (0, 0) circle [x radius= 3.35, y radius= 3.35]   ;
\draw    (100,140) -- (140,140) ;
\draw [shift={(100,140)}, rotate = 0] [color={rgb, 255:red, 0; green, 0; blue, 0 }  ][fill={rgb, 255:red, 0; green, 0; blue, 0 }  ][line width=0.75]      (0, 0) circle [x radius= 3.35, y radius= 3.35]   ;

\draw (207,132.4) node [anchor=north west][inner sep=0.75pt]  [font=\small]  {$2$};
\draw (101,122.4) node [anchor=north west][inner sep=0.75pt]  [font=\small]  {$3$};
\draw (157,56.4) node [anchor=north west][inner sep=0.75pt]  [font=\footnotesize]  {$b$};
\draw (307,56.4) node [anchor=north west][inner sep=0.75pt]  [font=\footnotesize]  {$b$};
\draw (167,216.4) node [anchor=north west][inner sep=0.75pt]  [font=\footnotesize]  {$b'$};
\draw (271,132.4) node [anchor=north west][inner sep=0.75pt]  [font=\small]  {$2$};
\draw (351,82.4) node [anchor=north west][inner sep=0.75pt]  [font=\small]  {$1$};
\draw (357,174.4) node [anchor=north west][inner sep=0.75pt, shape=rectangle]  [font=\small]  {$1$};
\draw (277,86.4) node [anchor=north west][inner sep=0.75pt]  [font=\footnotesize]  {$a$};
\draw (127,86.4) node [anchor=north west][inner sep=0.75pt]  [font=\footnotesize]  {$a$};
\draw (317,216.4) node [anchor=north west][inner sep=0.75pt]  [font=\footnotesize]  {$b'$};

\end{tikzpicture}

    \caption{Two cycles whose edges are covered by multiple broadcasting vertices. We may always remove an edge $ab$ from such a cycle without violating the maximal boundary independence condition.}
    \label{fig:my_label123}
\end{figure}
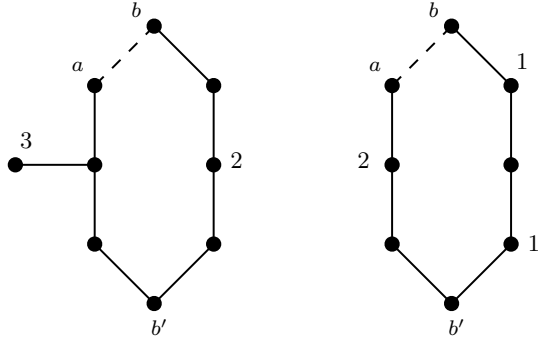

If $H_1$ contains a cycle, repeat the process, successively removing edges from cycles until the resulting graph $H_r$ is acyclic. If $H_r$ is connected, let $T=H_r$. Otherwise, since $G$ is connected, we may add edges of $G-H_r$ to $H_r$ joining components of $H_r$ without creating cycles until we obtain a tree $T$ spanning $G$. 

Since $f$ is maximal bn-independent on $H_r$, the construction ensures that $f$ is a maximal bn-independent broadcast on $T$, as $B_f(v)-PB_f(v)\neq \emptyset$ in $H_r$ for each $v\in V_f^+$. Hence $i_{\mathrm{bn}}(T)\leq\sigma(f)=i_{\mathrm{bn}}(G)$. But we have
already shown that $i_{\mathrm{bn}}(G)\leq i_{\mathrm{bn}}(T^{\prime})$ for
any spanning tree $T^{\prime}$ of $G$. Consequently, $i_{\mathrm{bn}%
}(G)=i_{\mathrm{bn}}(T)=\min\{i_{\mathrm{bn}}(T^{\prime}):T^{\prime}$ is a
spanning tree of $G\}$. 
\end{proof}

\subsection{Proof of Theorem 4.1}

Theorem \ref{theorem:ibnihtrees} may be extended to general graphs by an application of Theorem \ref{theorem:spanning_trees_ibn}. Suppose again that there exists a graph $G$ such that $i_{h}(G)<i_{\mathrm{bn}}(G)$. Then
$i_{h}(G)<\operatorname{rad}(G)$. Let $f$ be a hearing independent broadcast
on $G$ such that $|V_{f}^{+}|\geq2$, and such that some
edge of $G$ hears at least two broadcasting vertices.

If $u$ and $v$ are vertices in $V_{f}^{+}$ such that $u$ is adjacent to a
vertex in $B_{f}(v)$ (Figure \ref{fig:certif_digraph}), we write $u\rightarrow v$ and say that $u$
\emph{provides a certificate that the broadcast cannot be increased at }$v$,
or, in short, that $u$ \emph{certifies }$v$. We now define two graphs and a
digraph associated with $G$ and $f$.

\begin{itemize}
\item The \emph{neighbourhood graph }$\mathcal{N}_f(G)$ has as its
vertex set the set $V_{f}^{+}$, and two vertices $u,v\in V_{f}^{+}$ are
adjacent in $\mathcal{N}_f(G)$ if and only if $N_{f}(u)\cap
N_{f}(v)\neq\emptyset.$

\item The \emph{certification digraph} $\mathcal{C}_f(G)$ has as its
vertex set the set $V_{f}^{+}$, and $(v,u)$ is an arc of $\mathcal{C}_f(G)$ if and only if $v\rightarrow u$. Note that if $(v,u)$ is an
arc of $\mathcal{C}_f(G)$, then $(u,v)$ may or may not be an arc as
well. We say that $(v,u)$ is a \emph{double arc }if $(u,v)$ is also an arc,
otherwise we say that $(v,u)$ is a \emph{single arc}. Note that if $(v,u)$ is
an arc of $\mathcal{C}_f(G)$, then $(v,u)$ is
a double arc if and only if $f(v)=f(u)$.
\end{itemize}

\begin{figure}[H]
    \centering

\tikzset{every picture/.style={line width=0.75pt}} 

\begin{tikzpicture}[x=0.75pt,y=0.75pt,yscale=-1,xscale=1]

\draw    (120,80) -- (105,105) ;
\draw [shift={(105,105)}, rotate = 120.96] [color={rgb, 255:red, 0; green, 0; blue, 0 }  ][fill={rgb, 255:red, 0; green, 0; blue, 0 }  ][line width=0.75]      (0, 0) circle [x radius= 3.35, y radius= 3.35]   ;
\draw [shift={(120,80)}, rotate = 120.96] [color={rgb, 255:red, 0; green, 0; blue, 0 }  ][fill={rgb, 255:red, 0; green, 0; blue, 0 }  ][line width=0.75]      (0, 0) circle [x radius= 3.35, y radius= 3.35]   ;
\draw    (105,105) -- (90,130) ;
\draw [shift={(90,130)}, rotate = 120.96] [color={rgb, 255:red, 0; green, 0; blue, 0 }  ][fill={rgb, 255:red, 0; green, 0; blue, 0 }  ][line width=0.75]      (0, 0) circle [x radius= 3.35, y radius= 3.35]   ;
\draw [shift={(105,105)}, rotate = 120.96] [color={rgb, 255:red, 0; green, 0; blue, 0 }  ][fill={rgb, 255:red, 0; green, 0; blue, 0 }  ][line width=0.75]      (0, 0) circle [x radius= 3.35, y radius= 3.35]   ;
\draw    (90,130) -- (75,155) ;
\draw [shift={(75,155)}, rotate = 120.96] [color={rgb, 255:red, 0; green, 0; blue, 0 }  ][fill={rgb, 255:red, 0; green, 0; blue, 0 }  ][line width=0.75]      (0, 0) circle [x radius= 3.35, y radius= 3.35]   ;
\draw [shift={(90,130)}, rotate = 120.96] [color={rgb, 255:red, 0; green, 0; blue, 0 }  ][fill={rgb, 255:red, 0; green, 0; blue, 0 }  ][line width=0.75]      (0, 0) circle [x radius= 3.35, y radius= 3.35]   ;
\draw    (75,155) -- (60,180) ;
\draw [shift={(60,180)}, rotate = 120.96] [color={rgb, 255:red, 0; green, 0; blue, 0 }  ][fill={rgb, 255:red, 0; green, 0; blue, 0 }  ][line width=0.75]      (0, 0) circle [x radius= 3.35, y radius= 3.35]   ;
\draw [shift={(75,155)}, rotate = 120.96] [color={rgb, 255:red, 0; green, 0; blue, 0 }  ][fill={rgb, 255:red, 0; green, 0; blue, 0 }  ][line width=0.75]      (0, 0) circle [x radius= 3.35, y radius= 3.35]   ;
\draw    (180,180) -- (165,155) ;
\draw [shift={(180,180)}, rotate = 239.04] [color={rgb, 255:red, 0; green, 0; blue, 0 }  ][fill={rgb, 255:red, 0; green, 0; blue, 0 }  ][line width=0.75]      (0, 0) circle [x radius= 3.35, y radius= 3.35]   ;
\draw    (165,155) -- (150,130) ;
\draw [shift={(165,155)}, rotate = 239.04] [color={rgb, 255:red, 0; green, 0; blue, 0 }  ][fill={rgb, 255:red, 0; green, 0; blue, 0 }  ][line width=0.75]      (0, 0) circle [x radius= 3.35, y radius= 3.35]   ;
\draw    (150,130) -- (135,105) ;
\draw [shift={(150,130)}, rotate = 239.04] [color={rgb, 255:red, 0; green, 0; blue, 0 }  ][fill={rgb, 255:red, 0; green, 0; blue, 0 }  ][line width=0.75]      (0, 0) circle [x radius= 3.35, y radius= 3.35]   ;
\draw    (135,105) -- (120,80) ;
\draw [shift={(135,105)}, rotate = 239.04] [color={rgb, 255:red, 0; green, 0; blue, 0 }  ][fill={rgb, 255:red, 0; green, 0; blue, 0 }  ][line width=0.75]      (0, 0) circle [x radius= 3.35, y radius= 3.35]   ;
\draw    (60,180) -- (100,180) ;
\draw [shift={(100,180)}, rotate = 0] [color={rgb, 255:red, 0; green, 0; blue, 0 }  ][fill={rgb, 255:red, 0; green, 0; blue, 0 }  ][line width=0.75]      (0, 0) circle [x radius= 3.35, y radius= 3.35]   ;
\draw    (290,80) -- (230,180) ;
\draw [shift={(230,180)}, rotate = 120.96] [color={rgb, 255:red, 0; green, 0; blue, 0 }  ][fill={rgb, 255:red, 0; green, 0; blue, 0 }  ][line width=0.75]      (0, 0) circle [x radius= 3.35, y radius= 3.35]   ;
\draw [shift={(290,80)}, rotate = 120.96] [color={rgb, 255:red, 0; green, 0; blue, 0 }  ][fill={rgb, 255:red, 0; green, 0; blue, 0 }  ][line width=0.75]      (0, 0) circle [x radius= 3.35, y radius= 3.35]   ;
\draw    (350,180) -- (290,80) ;
\draw [shift={(350,180)}, rotate = 239.04] [color={rgb, 255:red, 0; green, 0; blue, 0 }  ][fill={rgb, 255:red, 0; green, 0; blue, 0 }  ][line width=0.75]      (0, 0) circle [x radius= 3.35, y radius= 3.35]   ;
\draw    (230,180) -- (350,180) ;
\draw    (460,80) -- (400,180) ;
\draw [shift={(400,180)}, rotate = 120.96] [color={rgb, 255:red, 0; green, 0; blue, 0 }  ][fill={rgb, 255:red, 0; green, 0; blue, 0 }  ][line width=0.75]      (0, 0) circle [x radius= 3.35, y radius= 3.35]   ;
\draw [shift={(433.34,124.43)}, rotate = 120.96] [fill={rgb, 255:red, 0; green, 0; blue, 0 }  ][line width=0.08]  [draw opacity=0] (8.93,-4.29) -- (0,0) -- (8.93,4.29) -- cycle    ;
\draw [shift={(460,80)}, rotate = 120.96] [color={rgb, 255:red, 0; green, 0; blue, 0 }  ][fill={rgb, 255:red, 0; green, 0; blue, 0 }  ][line width=0.75]      (0, 0) circle [x radius= 3.35, y radius= 3.35]   ;
\draw    (520,180) -- (460,80) ;
\draw [shift={(487.43,125.71)}, rotate = 59.04] [fill={rgb, 255:red, 0; green, 0; blue, 0 }  ][line width=0.08]  [draw opacity=0] (8.93,-4.29) -- (0,0) -- (8.93,4.29) -- cycle    ;
\draw [shift={(520,180)}, rotate = 239.04] [color={rgb, 255:red, 0; green, 0; blue, 0 }  ][fill={rgb, 255:red, 0; green, 0; blue, 0 }  ][line width=0.75]      (0, 0) circle [x radius= 3.35, y radius= 3.35]   ;
\draw    (100,180) -- (140,180) ;
\draw [shift={(140,180)}, rotate = 0] [color={rgb, 255:red, 0; green, 0; blue, 0 }  ][fill={rgb, 255:red, 0; green, 0; blue, 0 }  ][line width=0.75]      (0, 0) circle [x radius= 3.35, y radius= 3.35]   ;
\draw    (140,180) -- (180,180) ;
\draw    (400,180) -- (520,180) ;
\draw [shift={(453.5,180)}, rotate = 0] [fill={rgb, 255:red, 0; green, 0; blue, 0 }  ][line width=0.08]  [draw opacity=0] (8.93,-4.29) -- (0,0) -- (8.93,4.29) -- cycle    ;
\draw    (400,180) .. controls (440,150) and (479,150) .. (520,180) ;
\draw [shift={(464.99,157.68)}, rotate = 181.73] [fill={rgb, 255:red, 0; green, 0; blue, 0 }  ][line width=0.08]  [draw opacity=0] (8.93,-4.29) -- (0,0) -- (8.93,4.29) -- cycle    ;

\draw (117,59.4) node [anchor=north west][inner sep=0.75pt]  [font=\footnotesize]  {$3$};
\draw (178,161.4) node [anchor=north west][inner sep=0.75pt]  [font=\footnotesize]  {$2$};
\draw (47,161.4) node [anchor=north west][inner sep=0.75pt]  [font=\footnotesize]  {$2$};
\draw (113,192.4) node [anchor=north west][inner sep=0.75pt]    {$G$};
\draw (268,192.4) node [anchor=north west][inner sep=0.75pt]    {$\mathcal{N}_{f}( G)$};
\draw (438,192.4) node [anchor=north west][inner sep=0.75pt]    {$C_{f}( G)$};
\draw (44,176.4) node [anchor=north west][inner sep=0.75pt]  [font=\footnotesize]  {$u$};
\draw (105,74.4) node [anchor=north west][inner sep=0.75pt]  [font=\footnotesize]  {$v$};
\draw (184,176.4) node [anchor=north west][inner sep=0.75pt]  [font=\footnotesize]  {$w$};

\end{tikzpicture}

    \caption{A maximal h-independent broadcast $f$ on a graph $G$ along with the corresponding neighbourhood graph and certification digraph. Observe that $v$ is certified by $u$ and $w$, and $u$ and $w$ certify each other.}
    \label{fig:certif_digraph}
\end{figure}
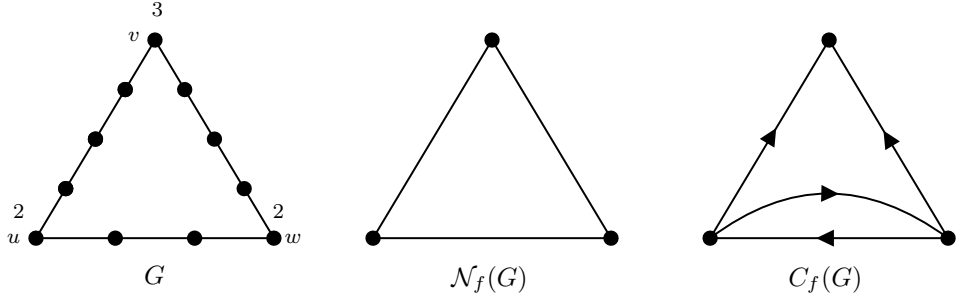

\begin{itemize}
\item The \emph{underlying graph} of $\mathcal{C}_f(G)$ (or of a
subgraph $\mathcal{H}$ of $\mathcal{C}_f(G)$) is the graph obtained by replacing arcs of  $\mathcal{C}_f(G)$ (or
$\mathcal{H}$) by edges and identifying double edges.
\end{itemize}

The underlying graph of $\mathcal{C}_f(G)$ in the example shown in Figure \ref{fig:certif_digraph} is a triangle. Note that the underlying graph of $\mathcal{C}_f(G)$ need not contain all edges of $\mathcal{N}_f(G)$.

\begin{proposition}
\label{prop:mar_prop6}
If $f$ is a maximal hearing independent broadcast on $G$ such
that $|V_{f}^{+}|\geq2$, then each vertex $u\in V_{f}^{+}$ is adjacent, in
$\mathcal{N}_{f}(G)$, to a vertex $v\in V_{f}^{+}$ such that $f(v)\leq
f(u)$.
\end{proposition}

\begin{proof}
Suppose there exists a vertex $u\in V_f^+$ such that 
$d_{G}(u,v)\geq
f(v)+1>f(u)+1$ for all $v\in V_{f}^{+}-\{u\}$. Then the broadcast obtained by
increasing the strength of $f$ at $u$ by $1$ is also hearing
independent, contradicting the maximality of $f$. 
\end{proof}

\begin{proposition}
\label{prop:mar_prop7}Let $f$ be a hearing independent broadcast on $G$ such that
$|V_{f}^{+}|\geq2$. Then $f$ is maximal hearing independent if and only if $f$ is dominating 
and each vertex of $\mathcal{C}_{f}(G)$ has positive in-degree.
\end{proposition}

\begin{proof}

If $v$ has in-degree 0 for some $v\in \mathcal{C}_f(G)$, by definition of $\mathcal{C}_f(G)$, no vertex on the $f$-boundary of $v$ is adjacent to a vertex in $V_f^+-\{v\}$. By part (ii) of Proposition \ref{prop:nec_suf_hearing}, $f$ is not maximal.

Conversely, if $f$ is dominating and every vertex of $\mathcal{C}_f(G)$ has positive in-degree, then $f$ is maximal by part (ii) of Proposition \ref{prop:nec_suf_hearing}.
\end{proof}

\begin{proposition}
\label{prop:mar_prop8}
Let $f$ be a maximal hearing independent broadcast on $G$ such
that $|V_{f}^{+}|\geq2$. Suppose $C$ is a cycle in the underlying graph of
$\mathcal{C}_{f}(G)$. Then the subgraph of $\mathcal{C}_f(G)$ with arcs corresponding to $E(C)$ contains a directed cycle of length at least 3 if and only if every edge of $C$ corresponds to a double arc.
\end{proposition}

\begin{proof}
Suppose the subgraph of $\mathcal{C}_f(G)$ corresponding to $C$ contains a directed cycle. Label the vertices of $C$ as $v_1, v_2, ..., v_k$ such that $v_i$ certifies $v_{i+1}$ for all $1\leq i\leq k-1$ and $v_k$ certifies $v_1$. Then $f(v_i)\leq f(v_{i+1})$ for all $1\leq i\leq k-1$ and $f(v_k)\leq f(v_1)$.

Without loss of generality, suppose for a contradiction that $v_1\rightarrow v_2$ but $v_2\not\rightarrow v_1$. Since $(v_1, v_2)$ is a single arc if and only if $f(v_1)<f(v_2)$, we have that
\begin{center}
    $f(v_2)>f(v_1)\geq f(v_k) \geq f(v_{k-1})\geq \cdots \geq f(v_2)$,
\end{center}
which is impossible. 

The converse is obvious. 
\end{proof}

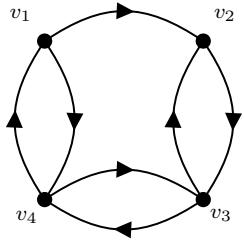
\begin{figure}[H]
    \centering

\tikzset{every picture/.style={line width=0.75pt}} 

\begin{tikzpicture}[x=0.75pt,y=0.75pt,yscale=-1,xscale=1]

\draw    (140,160) ;
\draw [shift={(140,160)}, rotate = 0] [color={rgb, 255:red, 0; green, 0; blue, 0 }  ][fill={rgb, 255:red, 0; green, 0; blue, 0 }  ][line width=0.75]      (0, 0) circle [x radius= 3.35, y radius= 3.35]   ;
\draw    (220,160) ;
\draw [shift={(220,160)}, rotate = 0] [color={rgb, 255:red, 0; green, 0; blue, 0 }  ][fill={rgb, 255:red, 0; green, 0; blue, 0 }  ][line width=0.75]      (0, 0) circle [x radius= 3.35, y radius= 3.35]   ;
\draw    (140,80) .. controls (170,59.8) and (190,59.8) .. (220,80) ;
\draw [shift={(220,80)}, rotate = 33.95] [color={rgb, 255:red, 0; green, 0; blue, 0 }  ][fill={rgb, 255:red, 0; green, 0; blue, 0 }  ][line width=0.75]      (0, 0) circle [x radius= 3.35, y radius= 3.35]   ;
\draw [shift={(185.26,65.15)}, rotate = 182.77] [fill={rgb, 255:red, 0; green, 0; blue, 0 }  ][line width=0.08]  [draw opacity=0] (8.93,-4.29) -- (0,0) -- (8.93,4.29) -- cycle    ;
\draw [shift={(140,80)}, rotate = 326.05] [color={rgb, 255:red, 0; green, 0; blue, 0 }  ][fill={rgb, 255:red, 0; green, 0; blue, 0 }  ][line width=0.75]      (0, 0) circle [x radius= 3.35, y radius= 3.35]   ;
\draw    (220,80) .. controls (240.2,110) and (240.2,130) .. (220,160) ;
\draw [shift={(234.85,125.26)}, rotate = 272.77] [fill={rgb, 255:red, 0; green, 0; blue, 0 }  ][line width=0.08]  [draw opacity=0] (8.93,-4.29) -- (0,0) -- (8.93,4.29) -- cycle    ;
\draw    (220,160) .. controls (190,180.2) and (170,180.2) .. (140,160) ;
\draw [shift={(174.74,174.85)}, rotate = 2.77] [fill={rgb, 255:red, 0; green, 0; blue, 0 }  ][line width=0.08]  [draw opacity=0] (8.93,-4.29) -- (0,0) -- (8.93,4.29) -- cycle    ;
\draw    (140,160) .. controls (119.8,130) and (119.8,110) .. (140,80) ;
\draw [shift={(125.15,114.74)}, rotate = 92.77] [fill={rgb, 255:red, 0; green, 0; blue, 0 }  ][line width=0.08]  [draw opacity=0] (8.93,-4.29) -- (0,0) -- (8.93,4.29) -- cycle    ;
\draw    (220,160) .. controls (199.8,130) and (199.8,110) .. (220,80) ;
\draw [shift={(205.15,114.74)}, rotate = 92.77] [fill={rgb, 255:red, 0; green, 0; blue, 0 }  ][line width=0.08]  [draw opacity=0] (8.93,-4.29) -- (0,0) -- (8.93,4.29) -- cycle    ;
\draw    (140,80) .. controls (160.2,110) and (160.2,130) .. (140,160) ;
\draw [shift={(154.85,125.26)}, rotate = 272.77] [fill={rgb, 255:red, 0; green, 0; blue, 0 }  ][line width=0.08]  [draw opacity=0] (8.93,-4.29) -- (0,0) -- (8.93,4.29) -- cycle    ;
\draw    (140,160) .. controls (170,139.8) and (190,139.8) .. (220,160) ;
\draw [shift={(185.26,145.15)}, rotate = 182.77] [fill={rgb, 255:red, 0; green, 0; blue, 0 }  ][line width=0.08]  [draw opacity=0] (8.93,-4.29) -- (0,0) -- (8.93,4.29) -- cycle    ;

\draw (121,62.4) node [anchor=north west][inner sep=0.75pt]  [font=\footnotesize]  {$v_{1}$};
\draw (222,163.4) node [anchor=north west][inner sep=0.75pt]  [font=\footnotesize]  {$v_{3}$};
\draw (224,62.4) node [anchor=north west][inner sep=0.75pt]  [font=\footnotesize]  {$v_{2}$};
\draw (124,164.4) node [anchor=north west][inner sep=0.75pt]  [font=\footnotesize]  {$v_{4}$};

\end{tikzpicture}

    \caption{An example of the situation considered in Proposition \ref{prop:mar_prop8}. Since 
    $(v_1, v_2)$ is a single arc, we have that $f(v_1)<f(v_2)$, contradicting $f(v_2)=f(v_3)=f(v_4)=f(v_1)$ along the double arcs.}
    \label{fig:dir_cycle}
\end{figure}

\begin{corollary}
\label{cor:opposite_arcs}
Let $f$ be a maximal hearing independent broadcast on $G$ such
that $|V_{f}^{+}|\geq2$, and suppose $C$ is a cycle in the underlying graph of
$\mathcal{C}_{f}(G)$. If $C$ contains an edge $e$ that does not corresponds to a double arc in $\mathcal{C}_f(G)$, then $C$ contains an edge $e'$ corresponding to a single arc such that $e$ and $e'$ are oriented in opposite directions in the subgraph of $\mathcal{C}_f(G)$ with arcs corresponding to the edges of $C$.

\end{corollary}

\begin{proof}
Suppose not. Then the vertices of $C$ may be labelled $v_1, v_2, ..., v_k$ such that $e$ correspond to the single arc $(v_k, v_1)$, and for all $1\leq i\leq k-1$, $v_i$ certifies $v_{i+1}$. But then the subgraph of $\mathcal{C}_f(G)$ contains a directed cycle $v_1 \rightarrow v_2 \rightarrow \cdots \rightarrow v_k \rightarrow v_1$, contradicting Proposition \ref{prop:mar_prop8}.
 \end{proof}

\noindent We may now prove our main result, restated here for convenience. 
\vspace{8pt}

\noindent \textbf{Theorem 4.1.}  
    For any graph $G$, $i_{bn}(G)\leq i_h(G)$.

\begin{proof}

Let $f$ be an $i_h$-broadcast on a connected graph $G$. 
If $\sigma(f)=\text{rad}(G)$, then the claim follows from the fact that $i_{bn}(G)\leq \text{rad}(G)$. Suppose $\sigma(f)<\text{rad}(G)$ (in which case $|V_f^+|\geq 2$), and let $G_0=G-U_f^E$. By the maximality of $f$, each component of $G_0$ contains at least two broadcasting vertices. 

Suppose $i_h(G)<i_{bn}(G)$. 
By Proposition \ref{prop:nec_suf_hearing} (ii), for each $v\in V_f^+$ there exists $u\in V_f^+$ such that $u$ is adjacent to a vertex in $B_f(v)$. 
Let $P_{u\rightarrow v}$ denote an arbitrarily chosen $u-v$ geodesic in $G_0$, which must exist as every edge on the shortest path between $u$ and $v$ is covered by $f$.

We aim to find a tree $T$ containing at least one geodesic $P_{u\rightarrow v}$ for every $v\in V_f^+$, such that $T$ spans $G$ and the restriction of $f$ to $T$, denoted $f_T$, is a dominating broadcast on $T$.
Then, since $\sigma(f_T)<\text{rad}(G)\leq \text{rad}(T)$, Proposition \ref{prop:nec_suf_hearing} (ii) will imply that $f_T$ is a maximal h-independent broadcast on $T$ such that $\sigma(f)=\sigma(f_T)$. We may then follow the proof of Theorem \ref{theorem:ibnihtrees} and apply Theorem \ref{theorem:spanning_trees_ibn} to obtain a contradiction.

Consider the certification digraph of $G_0$, denoted $\mathcal{C}_f(G_0)$, and let $H_0$ be its underlying graph. Suppose $C_0$ is a cycle in $H_0$.

If $C_0$ corresponds to a directed cycle in $\mathcal{C}_f(G_0)$, then each of its arcs corresponds to a double arc by Proposition \ref{prop:mar_prop8}. Let $xy$ be an arbitrary edge of $C_0$ and let $\mathcal{H}_1$ be the subgraph of $\mathcal{C}_f(G_0)$ obtained by deleting the arcs $(x, y)$ and $(y, x)$ from $\mathcal{C}_f(G_0)$. 

If $C_0$ does not correspond to a directed cycle, then by Corollary \ref{cor:opposite_arcs}, there exist two single arcs $(x, y)$ and $(x', y')$ oriented in opposite directions along the subgraph of $\mathcal{C}_f(G_0)$ corresponding to $C_0$. We may select $(x, y)$ and $(x', y')$ such that either $y=y'$ or all edges between $y$ and $y'$ on $C_0$ correspond to double arcs, so that $y$ and $y'$ each have in-degree at least $2$ in $\mathcal{C}_f(G_0)$. 
Let $\mathcal{H}_1$ be the subgraph obtained by deleting $(x, y)$.  By Proposition \ref{prop:mar_prop7}, every vertex of $\mathcal{C}_f(G_0)$ must have positive in-degree, hence $x$ is certified by $w$ for some $w\in V_f^+ -\{x, y\}$ (which may or may not lie on~$C_0$.)

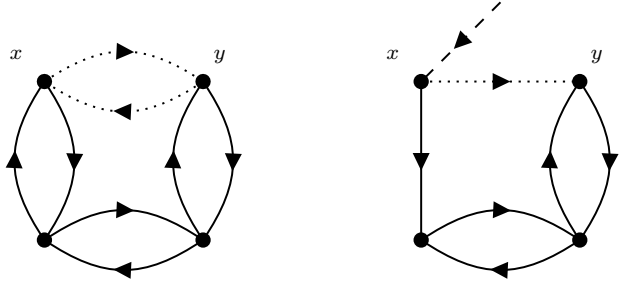
\begin{figure}[H]
    \centering

\tikzset{every picture/.style={line width=0.75pt}} 

\begin{tikzpicture}[x=0.75pt,y=0.75pt,yscale=-1,xscale=1]

\draw    (140,160) ;
\draw [shift={(140,160)}, rotate = 0] [color={rgb, 255:red, 0; green, 0; blue, 0 }  ][fill={rgb, 255:red, 0; green, 0; blue, 0 }  ][line width=0.75]      (0, 0) circle [x radius= 3.35, y radius= 3.35]   ;
\draw    (220,160) ;
\draw [shift={(220,160)}, rotate = 0] [color={rgb, 255:red, 0; green, 0; blue, 0 }  ][fill={rgb, 255:red, 0; green, 0; blue, 0 }  ][line width=0.75]      (0, 0) circle [x radius= 3.35, y radius= 3.35]   ;
\draw [line width=0.75]  [dash pattern={on 0.84pt off 2.51pt}]  (140,80) .. controls (170,59.8) and (190,59.8) .. (220,80) ;
\draw [shift={(220,80)}, rotate = 33.95] [color={rgb, 255:red, 0; green, 0; blue, 0 }  ][fill={rgb, 255:red, 0; green, 0; blue, 0 }  ][line width=0.75]      (0, 0) circle [x radius= 3.35, y radius= 3.35]   ;
\draw [shift={(185.26,65.15)}, rotate = 182.77] [fill={rgb, 255:red, 0; green, 0; blue, 0 }  ][line width=0.08]  [draw opacity=0] (8.93,-4.29) -- (0,0) -- (8.93,4.29) -- cycle    ;
\draw [shift={(140,80)}, rotate = 326.05] [color={rgb, 255:red, 0; green, 0; blue, 0 }  ][fill={rgb, 255:red, 0; green, 0; blue, 0 }  ][line width=0.75]      (0, 0) circle [x radius= 3.35, y radius= 3.35]   ;
\draw    (220,80) .. controls (240.2,110) and (240.2,130) .. (220,160) ;
\draw [shift={(234.85,125.26)}, rotate = 272.77] [fill={rgb, 255:red, 0; green, 0; blue, 0 }  ][line width=0.08]  [draw opacity=0] (8.93,-4.29) -- (0,0) -- (8.93,4.29) -- cycle    ;
\draw    (220,160) .. controls (190,180.2) and (170,180.2) .. (140,160) ;
\draw [shift={(174.74,174.85)}, rotate = 2.77] [fill={rgb, 255:red, 0; green, 0; blue, 0 }  ][line width=0.08]  [draw opacity=0] (8.93,-4.29) -- (0,0) -- (8.93,4.29) -- cycle    ;
\draw    (140,160) .. controls (119.8,130) and (119.8,110) .. (140,80) ;
\draw [shift={(125.15,114.74)}, rotate = 92.77] [fill={rgb, 255:red, 0; green, 0; blue, 0 }  ][line width=0.08]  [draw opacity=0] (8.93,-4.29) -- (0,0) -- (8.93,4.29) -- cycle    ;
\draw    (220,160) .. controls (199.8,130) and (199.8,110) .. (220,80) ;
\draw [shift={(205.15,114.74)}, rotate = 92.77] [fill={rgb, 255:red, 0; green, 0; blue, 0 }  ][line width=0.08]  [draw opacity=0] (8.93,-4.29) -- (0,0) -- (8.93,4.29) -- cycle    ;
\draw    (140,80) .. controls (160.2,110) and (160.2,130) .. (140,160) ;
\draw [shift={(154.85,125.26)}, rotate = 272.77] [fill={rgb, 255:red, 0; green, 0; blue, 0 }  ][line width=0.08]  [draw opacity=0] (8.93,-4.29) -- (0,0) -- (8.93,4.29) -- cycle    ;
\draw    (140,160) .. controls (170,139.8) and (190,139.8) .. (220,160) ;
\draw [shift={(185.26,145.15)}, rotate = 182.77] [fill={rgb, 255:red, 0; green, 0; blue, 0 }  ][line width=0.08]  [draw opacity=0] (8.93,-4.29) -- (0,0) -- (8.93,4.29) -- cycle    ;
\draw [line width=0.75]  [dash pattern={on 0.84pt off 2.51pt}]  (220,80) .. controls (190,100.2) and (170,100.2) .. (140,80) ;
\draw [shift={(174.74,94.85)}, rotate = 2.77] [fill={rgb, 255:red, 0; green, 0; blue, 0 }  ][line width=0.08]  [draw opacity=0] (8.93,-4.29) -- (0,0) -- (8.93,4.29) -- cycle    ;
\draw    (330,160) ;
\draw [shift={(330,160)}, rotate = 0] [color={rgb, 255:red, 0; green, 0; blue, 0 }  ][fill={rgb, 255:red, 0; green, 0; blue, 0 }  ][line width=0.75]      (0, 0) circle [x radius= 3.35, y radius= 3.35]   ;
\draw    (410,160) ;
\draw [shift={(410,160)}, rotate = 0] [color={rgb, 255:red, 0; green, 0; blue, 0 }  ][fill={rgb, 255:red, 0; green, 0; blue, 0 }  ][line width=0.75]      (0, 0) circle [x radius= 3.35, y radius= 3.35]   ;
\draw    (410,80) .. controls (430.2,110) and (430.2,130) .. (410,160) ;
\draw [shift={(424.85,125.26)}, rotate = 272.77] [fill={rgb, 255:red, 0; green, 0; blue, 0 }  ][line width=0.08]  [draw opacity=0] (8.93,-4.29) -- (0,0) -- (8.93,4.29) -- cycle    ;
\draw    (410,160) .. controls (380,180.2) and (360,180.2) .. (330,160) ;
\draw [shift={(364.74,174.85)}, rotate = 2.77] [fill={rgb, 255:red, 0; green, 0; blue, 0 }  ][line width=0.08]  [draw opacity=0] (8.93,-4.29) -- (0,0) -- (8.93,4.29) -- cycle    ;
\draw    (410,160) .. controls (389.8,130) and (389.8,110) .. (410,80) ;
\draw [shift={(395.15,114.74)}, rotate = 92.77] [fill={rgb, 255:red, 0; green, 0; blue, 0 }  ][line width=0.08]  [draw opacity=0] (8.93,-4.29) -- (0,0) -- (8.93,4.29) -- cycle    ;
\draw    (330,160) .. controls (360,139.8) and (380,139.8) .. (410,160) ;
\draw [shift={(375.26,145.15)}, rotate = 182.77] [fill={rgb, 255:red, 0; green, 0; blue, 0 }  ][line width=0.08]  [draw opacity=0] (8.93,-4.29) -- (0,0) -- (8.93,4.29) -- cycle    ;
\draw  [dash pattern={on 0.84pt off 2.51pt}]  (330,80) -- (410,80) ;
\draw [shift={(410,80)}, rotate = 0] [color={rgb, 255:red, 0; green, 0; blue, 0 }  ][fill={rgb, 255:red, 0; green, 0; blue, 0 }  ][line width=0.75]      (0, 0) circle [x radius= 3.35, y radius= 3.35]   ;
\draw [shift={(375,80)}, rotate = 180] [fill={rgb, 255:red, 0; green, 0; blue, 0 }  ][line width=0.08]  [draw opacity=0] (8.93,-4.29) -- (0,0) -- (8.93,4.29) -- cycle    ;
\draw [shift={(330,80)}, rotate = 0] [color={rgb, 255:red, 0; green, 0; blue, 0 }  ][fill={rgb, 255:red, 0; green, 0; blue, 0 }  ][line width=0.75]      (0, 0) circle [x radius= 3.35, y radius= 3.35]   ;
\draw    (330,80) -- (330,160) ;
\draw [shift={(330,125)}, rotate = 270] [fill={rgb, 255:red, 0; green, 0; blue, 0 }  ][line width=0.08]  [draw opacity=0] (8.93,-4.29) -- (0,0) -- (8.93,4.29) -- cycle    ;
\draw  [dash pattern={on 4.5pt off 4.5pt}]  (370,40) -- (330,80) ;
\draw [shift={(346.46,63.54)}, rotate = 315] [fill={rgb, 255:red, 0; green, 0; blue, 0 }  ][line width=0.08]  [draw opacity=0] (8.93,-4.29) -- (0,0) -- (8.93,4.29) -- cycle    ;

\draw (121,62.4) node [anchor=north west][inner sep=0.75pt]  [font=\footnotesize]  {$x$};
\draw (224,62.4) node [anchor=north west][inner sep=0.75pt]  [font=\footnotesize]  {$y$};
\draw (311,62.4) node [anchor=north west][inner sep=0.75pt]  [font=\footnotesize]  {$x$};
\draw (414,62.4) node [anchor=north west][inner sep=0.75pt]  [font=\footnotesize]  {$y$};

\end{tikzpicture}

    \caption{ The two cases considered. 
In either case, both $x$ and $y$ have positive in-degree in $\mathcal{H}_1$. }
    \label{fig:cycles_in_h}
\end{figure}

Repeat the process: at each step $i$, 
if the underlying graph $H_i$ of $\mathcal{H}_i$ contains a cycle $C_i$, delete corresponding arcs as described for $C_0$.
Eventually, we obtain a spanning subgraph
$\mathcal{H}_{k}$ of $\mathcal{C}_{f}({G}_{0})$ such that its underlying graph $H_k$ is a tree or forest. 

Construct $T$ as follows. For each arc $(u,v)$
of $\mathcal{H}_{k}$, let $P_{u\rightarrow v}$ be a $u-v$ geodesic in $G$,
where only one such path is chosen if $(u,v)$ is a double arc. 
Since $H_k$ is acyclic, so is the spanning subgraph $T_0$ obtained by removing all edges of $G$ not lying on one of the chosen paths. 

If $H_k$ is a forest, then $T_0$ contains at least 2 components dominated by the restriction of $f$ to $T_0$, denoted $f_{T_0}$. If there exists an edge $a_0b_0\in E(G)$ joining these two components, let $T_1=T_0\cup \{a_0b_0\}$ and let $f_{T_1}$ denote the restriction of $f$ to $T_1$. Repeat the process. At each step $i$, we construct a spanning forest $T_i$ of $G$ such that $T_{i}=T_{i-1}\cup \{a_{i-1}b_{i-1}\}$ for some edge $a_{i-1} b_{i-1}$ joining two components dominated by $f_{i-1}$. 

Suppose $T_k$ is the first spanning subgraph in the sequence consisting of a tree dominated by $f_k$ and a set of isolated vertices $S$. If $S=\emptyset$, let $T_k=T$. Otherwise, suppose $|S|=m$ and let  $u_1, u_2, ..., u_m$ be an ordering of the vertices in $S$ such that if $i\leq j$, then $\text{max}\{f(v)-d_G(v, u_i)\,|\,v\in V_f^+\}\leq  \text{max}\{f(v)-d_G(v, u_j)\,|\,v\in V_f^+\} $. Successively join each vertex $u_i\in S$ to a vertex $w$ such that $d_G(w, v)<f(v)$ for some broadcasting vertex $v$ in $G$, which must exist as $f$ dominates $u_i$ in $G$. Continue joining edges along a $w-v$ geodesic until $u_i$ hears $f_k$ from a broadcasting vertex in $T_k$.  
Repeat the process for all remaining vertices of $ S$, and let $T$ be the resulting spanning tree of~$G$. 

\begin{figure}[H]
    \centering

\tikzset{every picture/.style={line width=0.75pt}} 

\begin{tikzpicture}[x=0.75pt,y=0.75pt,yscale=-1,xscale=1]

\draw    (160,60) -- (160,100) ;
\draw [shift={(160,60)}, rotate = 90] [color={rgb, 255:red, 0; green, 0; blue, 0 }  ][fill={rgb, 255:red, 0; green, 0; blue, 0 }  ][line width=0.75]      (0, 0) circle [x radius= 3.35, y radius= 3.35]   ;
\draw    (160,180) -- (160,220) ;
\draw [shift={(160,220)}, rotate = 90] [color={rgb, 255:red, 0; green, 0; blue, 0 }  ][fill={rgb, 255:red, 0; green, 0; blue, 0 }  ][line width=0.75]      (0, 0) circle [x radius= 3.35, y radius= 3.35]   ;
\draw [shift={(160,180)}, rotate = 90] [color={rgb, 255:red, 0; green, 0; blue, 0 }  ][fill={rgb, 255:red, 0; green, 0; blue, 0 }  ][line width=0.75]      (0, 0) circle [x radius= 3.35, y radius= 3.35]   ;
\draw    (160,140) -- (160,180) ;
\draw [shift={(160,140)}, rotate = 90] [color={rgb, 255:red, 0; green, 0; blue, 0 }  ][fill={rgb, 255:red, 0; green, 0; blue, 0 }  ][line width=0.75]      (0, 0) circle [x radius= 3.35, y radius= 3.35]   ;
\draw    (160,100) -- (160,140) ;
\draw [shift={(160,100)}, rotate = 90] [color={rgb, 255:red, 0; green, 0; blue, 0 }  ][fill={rgb, 255:red, 0; green, 0; blue, 0 }  ][line width=0.75]      (0, 0) circle [x radius= 3.35, y radius= 3.35]   ;
\draw  [dash pattern={on 0.84pt off 2.51pt}]  (200,140) -- (200,180) ;
\draw [shift={(200,180)}, rotate = 90] [color={rgb, 255:red, 0; green, 0; blue, 0 }  ][fill={rgb, 255:red, 0; green, 0; blue, 0 }  ][line width=0.75]      (0, 0) circle [x radius= 3.35, y radius= 3.35]   ;
\draw [shift={(200,140)}, rotate = 90] [color={rgb, 255:red, 0; green, 0; blue, 0 }  ][fill={rgb, 255:red, 0; green, 0; blue, 0 }  ][line width=0.75]      (0, 0) circle [x radius= 3.35, y radius= 3.35]   ;
\draw  [dash pattern={on 0.84pt off 2.51pt}]  (200,100) -- (200,140) ;
\draw [shift={(200,100)}, rotate = 90] [color={rgb, 255:red, 0; green, 0; blue, 0 }  ][fill={rgb, 255:red, 0; green, 0; blue, 0 }  ][line width=0.75]      (0, 0) circle [x radius= 3.35, y radius= 3.35]   ;
\draw  [dash pattern={on 0.84pt off 2.51pt}]  (160,60) -- (200,100) ;
\draw  [dash pattern={on 0.84pt off 2.51pt}]  (160,220) -- (200,180) ;
\draw  [dash pattern={on 0.84pt off 2.51pt}]  (200,140) -- (240,140) ;
\draw [shift={(240,140)}, rotate = 0] [color={rgb, 255:red, 0; green, 0; blue, 0 }  ][fill={rgb, 255:red, 0; green, 0; blue, 0 }  ][line width=0.75]      (0, 0) circle [x radius= 3.35, y radius= 3.35]   ;

\draw (141,44.4) node [anchor=north west][inner sep=0.75pt]  [font=\small]  {$3$};
\draw (141,204.4) node [anchor=north west][inner sep=0.75pt]  [font=\small]  {$2$};
\draw (157,42.4) node [anchor=north west][inner sep=0.75pt]  [font=\small]  {$v$};
\draw (202,182.4) node [anchor=north west][inner sep=0.75pt]  [font=\small]  {$u_{3}$};
\draw (202,103.4) node [anchor=north west][inner sep=0.75pt]  [font=\small]  {$u_{4}$};
\draw (242,143.4) node [anchor=north west][inner sep=0.75pt]  [font=\small]  {$u_{1}$};
\draw (202,143.4) node [anchor=north west][inner sep=0.75pt]  [font=\small]  {$u_{2}$};
\draw (162,223.4) node [anchor=north west][inner sep=0.75pt]  [font=\small]  {$v'$};

\end{tikzpicture}

    \caption{Isolated vertices are joined by edges in increasing order of how much they are overdominated in $G$. Observe that if the edges $u_3 v'$ and $u_2 u_3 $ were added to $T_k$ before $u_1 u_2$, then $f_k$ would not dominate $u_1$ unless both edges $u_2 u_4$ and $u_4$ were added, creating a cycle.}
    \label{fig:my_label2}
\end{figure}
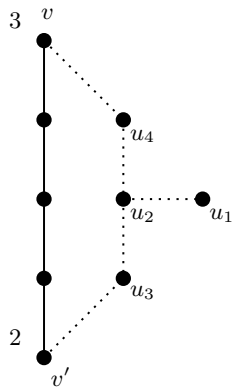

Let $f_{T}$ be the restriction of $f$ to $T$. Note that $|V_{f_{T}}%
^{+}|=|V_{f}^{+}|$ and $f_{T}(v)=f(v)$ for each $v\in V(G)$. Since $f$ is
hearing independent and dominating, so is $f_{T}$. Thus, since each vertex of
$\mathcal{H}_{k}$ has positive in-degree, Proposition \ref{prop:mar_prop7} implies that
$f_{T}$ is maximal hearing independent. Therefore $i_{h}(T)\leq
\sigma(f_{T})=\sigma(f)<\text{rad}(T)$.

Consider the restriction of $f_T$ to each component of $T-U_{f_T}^E$. By the maximality of $f_T$, each component contains at least two broadcasting vertices. If $f_T$ is maximal boundary independent, then $i_{bn}(T)\leq \sigma(f_T)=i_h(G)<i_{bn}(G)$, contradicting Theorem \ref{theorem:spanning_trees_ibn}. Therefore an edge in some component of $T-U_{f_T}^E$ hears $f_T$ from more than one broadcasting vertex. Following the proof of Theorem \ref{theorem:ibnihtrees}, we may obtain a dominating broadcast $g_l$ on $G$ such that for some central vertex $y$ of $G$, $\sigma(g_l)=g_l(y)=\sigma(f_T)\geq \text{rad}(T)$, a contradiction. 
\end{proof}

Applying the same approach as in the first half of the proof of Theorem \ref{theorem:spanning_trees_ibn}, we show that $i_h(T)\geq i_h(G)$ for any spanning tree of a connected graph $G$. We may then prove the analogous result to Theorem \ref{theorem:spanning_trees_ibn} for hearing independence as a corollary of Theorem~\ref{theorem:ibnih_gen2}.

\begin{theorem}
For any connected graph $G$, 
\begin{center}
    $i_h(G)=\textup{min}\{i_h(T)\,:\,T \text{ is a spanning tree of } G\}$.
\end{center}
\end{theorem}

\begin{proof}

Suppose there exists a tree $T$ spanning $G$ such that ${i_{\mathrm{h}}(T)<i_{\mathrm{h}}(G)}$, and let $f$ be an $i_{\mathrm{h}}%
$-broadcast on $T$. If $f$ is maximal boundary independent on $T$, then $\sigma(f)\leq i_{h}(T)\leq i_{h}(G)$ by Theorem \ref{theorem:spanning_trees_ibn}, so suppose at least one edge of $T$ hears $f$ from more than one broadcasting vertex. 

It follows that $|V_f^+|\geq 2$. Since $T$ is a tree and $i_h(T)<i_h(G)\leq \text{rad}(G)$, the set of uncovered edges $U_f^E$ is nonempty. Then $T-U_f^E$ is disconnected. By the maximality of $f$, every component of $T-U_f^E$ contains at least two broadcasting vertices.

Since $f$ is dominating but not h-independent on $G$, there exist vertices $u, v\in V_f^+$ such that $d_G(u, v)\leq f(v)$.
Arbitrarily add edges of $G-T$ to $T$ until the addition of some edge, say $e_1$, results in a spanning subgraph $G_1$ of $G$ such that $f$ is not h-independent on $G_1$. 
Proceed as in the proof of Theorem \ref{theorem:spanning_trees_ibn} to obtain a dominating broadcast $f'$ on $G$ such that $\sigma(f')=\text{rad}(G)$ and $|V_{f'}^+|=1$. But then $f'$ is maximal h-independent on $G$, a contradiction. 

\bigskip 
To show there exists a tree $T$ spanning $G$ such that $i_h(T)=i_h(G)$, let $f$ be an $i_h$-broadcast on $G$ and construct $T$ as in the proof of Theorem \ref{theorem:ibnih_gen2}, such that the restriction $f_T$ is maximal hearing independent on $T$. Then $i_h(T)\leq \sigma(f_T)=\sigma(f)=i_h(G)$. But we have already shown that $i_{h}(G)\leq i_h(T)$, hence $i_h(T)=i_h(G)$.
\end{proof}

In \cite{mynhardtmarchessault2021lower}, Marchessault and Mynhardt asked whether the difference $i_h(G)-i_{bn}(G)$ may be arbitrarily large. A construction of such an infinite family of graphs is presented in \cite{thesis}. 
Marchessault and Mynhardt also posed the problem of bounding the ratio $i_h(G)/i_{bn}(G)$ for general graphs, which we consider in the following section. 

\section{The Ratio $i_h(G)/i_{bn}(G)$}

Recall that $i\,\diamond \,i_{bn}$ and $i \,\diamond \, i_h$.
The ratios $\frac{i(G)}{i_{bn}(G)}$ and  $\frac{i(G)}{i_{h}(G)}$, in general, may be arbitrarily large: for example, $i(K_{n, n})=n$ whereas $i_{bn}(K_{n,n})=i_h(K_{n,n})=2$ for all $n\geq 2$.
In \cite{mynhardtmarchessault2021lower}, Marchessault and Mynhardt found that $i_{bn}(G)\leq \lceil \frac{4i(G)}{3}\rceil$ and asked whether the ratio $\frac{i_h(G)}{i_{bn}(G)}$ may be similarly bounded.

In the previous section, we found that $i_h$ and $i_{bn}$ are comparable. In particular, since $i_{bn}(G)\leq i_{h}(G)$ for all $G$, $\frac{i_{bn}(G)}{i_h(G)}\leq 1$.
We now prove that $\frac{i_{h}(G)}{i_{bn}(G)}\leq \frac{5}{4}$ for all graphs $G.$

\begin{proposition}\textup{\cite{mynhardtmarchessault2021lower}}
\label{prop:subtrees}
If $T'$ is a subtree of a tree $T$, then $i_{bn}(T')\leq i_{bn}(T)$. 
\end{proposition}

Recall that $P_n$ denotes the path on $n$ vertices. Since a tree $T$ with diameter $d$ contains the path $P_{d+1}$ as a subtree, we may bound $i_{bn}(T)$ below by the value $i_{bn}(P_{d+1})$, which was determined exactly by Neilson in \cite{neilsonphd}.

\begin{proposition}
\textup{\cite{neilsonphd}}
\label{prop:ibn_paths}
For any $n\neq 3$, $i_{bn}(P_n)=\lceil \frac{2n}{5}\rceil$.
\end{proposition}

The exception is $P_3$, which admits a maximal bn-independent broadcast of cost 1.

\begin{theorem}
For any graph $G$, $1\leq i_{h}(G)/i_{bn}(G)\leq 5/4$.
\end{theorem}

\begin{proof}
Since $i_{bn}(G)$ is equal to the sum of the costs of $i_{bn}$-broadcasts on all components of $G$, it suffices to consider graph with one component, so assume $G$ is connected. By Theorem \ref{theorem:spanning_trees_ibn}, there exists a tree $T$ spanning $G$ such that $i_{bn}(T)=i_{bn}(G)$. 

Let $d=\text{diam}(T)$ and let $D \cong P_{d+1}$ be a diametrical path of $T$. It follows from Proposition \ref{prop:subtrees} and Proposition \ref{prop:ibn_paths} that $i_{bn}(T)\geq \lceil\frac{2(d+1)}{5}\rceil\geq \frac{2(d+1)}{5}$. 
Since $T$ spans $G$, $\text{rad}(G)\leq \text{rad}(T)$.
Finally, since $i_h(G)\leq \text{rad}(G)$ for any connected graph $G$, we have that

\begin{align*}
     \frac{i_h(G)}{i_{bn}(G)}=\frac{i_h(G)}{i_{bn}(T)}\leq \frac{5\cdot\text{rad}(G)}{2(d+1)} \leq \frac{5\cdot \text{rad}(G)}{4\cdot\text{rad}(T)}\leq \frac{5\cdot\text{rad}(T)}{4\cdot\text{rad}(T)}=\frac{5}{4}.
\end{align*}
         
The lower bound follows from Theorem \ref{theorem:ibnih_gen2}. \end{proof}

\section{Upper and Lower Bounds on $\alpha_{bn}(G)$}

We turn our attention to the upper parameter corresponding to boundary independent broadcasts. 
Recall that $\alpha_{bn}(G)$ denotes the maximum weight of a boundary independent broadcast on a graph $G$, called the \textit{upper boundary independence number} or simply the \textit{boundary independence number} of $G$. The corresponding parameter for the maximum weight of a hearing independent broadcast, denoted $\alpha_h(G)$, is called the \textit{hearing independence number} of $G$. 
Our focus in this section is to establish bounds on $\alpha_{bn}(G)$ for general graphs, and determine parameters comparable to $\alpha_{bn}$ to place the parameter within a chain of inequalities.

In \cite{bessy2}, Bessy and Rautenbach found that $\alpha_h(G)< 4\alpha(G)$, adapting a proof technique used by Neilson \cite{neilsonphd} to show that $\alpha_h(G)< 2\alpha_{bn}(G)$. 
 Mynhardt and Neilson further studied the ratio $\frac{\alpha_{bn}(G)}{\alpha(G)}$ in \cite{mynhardt2021lowerexact} and asked whether it can be shown that $\frac{\alpha_{bn}(G)}{\alpha(G)}<2$ for all graphs $G$.

\begin{theorem}
\label{theorem:2alpha}
For any graph $G$, $\alpha_{bn}(G)< 2\alpha(G)$.
\end{theorem}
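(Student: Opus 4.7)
The plan is to construct, from any boundary independent broadcast $f$, an independent set $S$ of $G$ with $|S| > \sigma(f)/2$; applied to an $f$ realizing $\alpha_{bn}(G)$, this yields $\alpha_{bn}(G) < 2\alpha(G)$. The enabling observation is that boundary independence makes the strict interiors $N_f(v) \setminus B_f(v)$ for $v \in V_f^+$ not merely pairwise disjoint (since each sits inside $PN_f(v)$) but pairwise \emph{non-adjacent} in $G$: an edge from $u \in N_f(v) \setminus B_f(v)$ to $u' \in N_f(v') \setminus B_f(v')$ with $v \neq v'$ would yield $d_G(v', u) \leq d_G(v', u') + 1 \leq f(v')$, placing $u \in N_f(v')$ and contradicting $u \in PN_f(v)$.

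For each $v \in V_f^+$ write $k_v := f(v)$, fix a shortest path $P_v \colon v = v_0, v_1, \ldots, v_{k_v}$ (which exists because $f(v) \leq e(v)$), and let $S_v$ be the set of alternating vertices $v_0, v_2, v_4, \ldots$ drawn from the interior $\{v_0, \ldots, v_{k_v - 1}\}$, so that $|S_v| = \lceil k_v/2 \rceil$. Within a single $S_v$, the chosen vertices lie at pairwise distance at least $2$ along a shortest path, hence in $G$; between distinct $S_v$ and $S_{v'}$ the non-adjacency from the first paragraph applies, since $S_v \subseteq N_f(v) \setminus B_f(v)$. Thus $S := \bigcup_{v \in V_f^+} S_v$ is independent in $G$ with $|S| = \sum_{v \in V_f^+} \lceil k_v/2 \rceil$.

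It remains to compare $|S|$ with $\sigma(f)/2 = \sum_{v} k_v/2$. If any $k_v$ is odd, then $\lceil k_v/2 \rceil > k_v/2$ already gives $|S| > \sigma(f)/2$. The main obstacle is the parity case where every $k_v$ is even and $V_f^+ \neq \emptyset$, in which $|S| = \sigma(f)/2$ exactly and one extra independent vertex must be squeezed out. I plan to adjoin the boundary endpoint $w_{k_w}$ of $P_w$ for an arbitrary $w \in V_f^+$: non-adjacency to $S_w$ is immediate from $d_G(w_{k_w}, w_{2j}) = k_w - 2j \geq 2$, while non-adjacency to $S_v$ for $v \neq w$ follows by the same boundary-independence trick, since an edge from $w_{k_w}$ to some $v_{2j} \in S_v$ with $2j \leq k_v - 2$ would force $d_G(v, w_{k_w}) \leq k_v - 1$, placing $w_{k_w} \in PN_f(v)$ and contradicting $w_{k_w} \in N_f(w)$. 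The same argument also shows $w_{k_w} \notin S$, so $|S \cup \{w_{k_w}\}| = \sigma(f)/2 + 1 > \sigma(f)/2$; the trivial case $V_f^+ = \emptyset$ gives $\sigma(f) = 0 < 2\alpha(G)$, completing the plan.
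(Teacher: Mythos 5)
Your proof is correct, and it takes a genuinely different route from the paper's. The paper argues by repeated surgery on the broadcast itself: for a boundary vertex $u$ heard by several broadcasting vertices it forms the spider $T_u$ of geodesics into $u$, properly $2$-colours it, replaces the broadcasts from the leaves of $T_u$ by strength-one broadcasts from the larger colour class, and iterates until only strength-one broadcasts (hence an independent set) remain; the strict inequality comes from the central vertex $u$ contributing the ``$+1$'' to $|V(T_u)|=1+\sum_{w}f(w)$. You instead build the independent set in one pass: the key structural fact you isolate --- that the strict interiors $N_f(v)\setminus B_f(v)$ lie in $PN_f(v)$ and are therefore pairwise \emph{non-adjacent}, not just disjoint --- lets you take the $\lceil f(v)/2\rceil$ even-position vertices of a geodesic inside each interior and union them directly, with the boundary endpoint $w_{k_w}$ supplying the extra vertex in the all-even parity case (your verification that $w_{k_w}$ is non-adjacent to every $S_v$ and lies outside $S$ is exactly the point that needs checking, and it goes through). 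The counting is parallel ($\lceil k/2\rceil$ per broadcaster plus one), but your version avoids the paper's iterative step and its attendant claims that each intermediate broadcast remains boundary independent and that $T_u$ is an induced path or spider; it is arguably cleaner and makes the role of boundary independence (interiors are private, hence mutually non-adjacent) more transparent. One cosmetic remark: your non-adjacency lemma implicitly assumes $u'$ is at distance at most $f(v')-1$ from $v'$, which is exactly what $u'\in N_f(v')\setminus B_f(v')$ gives, so the inequality $d_G(v',u)\le d_G(v',u')+1\le f(v')$ is right as stated.
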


\begin{proof} Let $f$ be an $\alpha_{bn}$-broadcast on $G$.
If $V_f^+ = V_f^1$, $V_f^+$ is an independent set, so assume there exists $v$ such that $f(v)\geq 2$. Let $u\in B_f(v)$ and consider a subgraph $T_u$ consisting of $u$ and unique geodesics from $u$ to each vertex in $S=\{x \in V_f^+ : u \in B_f(x)\}$, the set of all broadcasting vertices heard by $u$ (if $u\in PB_f(v)$, $S=\{v\}$). 
Since no edge can be covered by two different broadcasts, $T_u$ is an induced path or spider, hence $|V(T_u)| = 1+ \sum\limits_{w\in S} f(w)$. 

Consider a proper two-colouring of $T_u$ and define a new boundary independent broadcast by deleting all broadcasts from the leaves of $T_u$ and adding strength-one broadcasts from each vertex in the color set of highest cardinality (if they have equal cardinality, select one arbitrarily). 

Since broadcasts overlap only on boundaries,
the broadcasting vertices of the resulting bn-independent broadcast form an independent set of cardinality at least $|V(T_u)|/2$. 
Repeating the process until no vertices broadcasting at strength greater than 1 remain yields an independent set on $G$. Since $|V(T_u)|<\sum\limits_{w\in S}f(w)$, it follows that ${\alpha_{bn}(G)/2 < \alpha(G)}$.
\end{proof}

As $\alpha_{bn}(G)\leq \alpha_h(G)$ for any graph $G$, Bessy and Rautenbach's bound now follows easily from Neilson's result and Theorem \ref{theorem:2alpha}.

\begin{corollary}
For any graph $G$, $\alpha_{h}(G)< 4\alpha(G)$.
\end{corollary}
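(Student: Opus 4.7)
The plan is simply to chain two inequalities that are already on the table. Neilson's result, cited in the paragraph preceding Theorem \ref{theorem:2alpha}, gives $\alpha_h(G) < 2\alpha_{bn}(G)$ for every graph $G$, and Theorem \ref{theorem:2alpha} has just established $\alpha_{bn}(G) < 2\alpha(G)$. Composing these yields $\alpha_h(G) < 2\alpha_{bn}(G) < 4\alpha(G)$, which is the desired bound.

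The only thing to verify is that the two strict inequalities can be composed to produce a strict inequality with constant $4$ rather than something weaker. Since $\alpha_h(G)$, $\alpha_{bn}(G)$, and $\alpha(G)$ are all nonnegative integers, $\alpha_h(G) < 2\alpha_{bn}(G)$ implies $\alpha_h(G) \le 2\alpha_{bn}(G) - 1$, and $\alpha_{bn}(G) < 2\alpha(G)$ implies $\alpha_{bn}(G) \le 2\alpha(G) - 1$. Substituting gives $\alpha_h(G) \le 2(2\alpha(G) - 1) - 1 = 4\alpha(G) - 3 < 4\alpha(G)$, so strictness is preserved with room to spare. There is no real obstacle here: the work was done in Theorem \ref{theorem:2alpha} and in Neilson's lemma, and the corollary is a one-line composition.
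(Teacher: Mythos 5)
Your proof is correct and matches the paper's: both simply chain Neilson's bound $\alpha_h(G)<2\alpha_{bn}(G)$ with Theorem \ref{theorem:2alpha} to get $\alpha_h(G)<2\alpha_{bn}(G)<4\alpha(G)$. The integrality digression is unnecessary---multiplying the strict inequality $\alpha_{bn}(G)<2\alpha(G)$ by the positive constant $2$ already preserves strictness---but it does no harm.
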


Let $\delta(G)$ denote the minimum degree of $G$. It is easy to see that ${\alpha(G)\leq n-\delta(G)}$, as the inclusion of any vertex to an independent set excludes its neighbours. We determine the analogous result for maximum boundary independence, thereby solving an open problem posed in  \cite{mncomparingupper}.

\begin{theorem}
For any graph $G$ of order $n$, $\alpha_{bn}(G)\leq n-\delta(G)$.
\end{theorem}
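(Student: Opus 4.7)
The plan is to take an arbitrary bn-independent broadcast $f$ on $G$ and show directly that $\sigma(f) \leq n - \delta(G)$. For each $v \in V_f^+$, I would write $I_f(v) = N_f(v) \setminus B_f(v)$ for its strict interior. The defining property of bn-independence, $I_f(v) \subseteq PN_f(v)$, makes these interiors pairwise disjoint, so $\sum_{v \in V_f^+}|I_f(v)|$ is a lower bound on $n$. The strategy is to push this sum up to $\sigma(f) + \delta - 1$, and then to locate one extra vertex lying outside every $I_f(v)$.

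First I would dispose of the case $V_f^+ = V_f^1$: then $V_f^+$ is an independent set and $\sigma(f) = |V_f^+| \leq \alpha(G) \leq n - \delta(G)$. Otherwise, fix some $v^* \in V_f^{++}$. The key inequality is $|I_f(v)| \geq \delta(G) + f(v) - 1$ whenever $f(v) \geq 2$: every neighbour of $v$ is at distance $1 < f(v)$, so $N[v] \subseteq I_f(v)$ contributes $\deg(v) + 1 \geq \delta + 1$ vertices, and a geodesic from $v$ to a vertex at distance $f(v) \leq e(v)$ contributes $f(v) - 2$ further interior vertices at distances $2, \ldots, f(v) - 1$, disjoint from $N[v]$. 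Together with the trivial $|I_f(v)| = 1$ when $f(v) = 1$, summing yields
$$\sum_{v \in V_f^+} |I_f(v)| \;\geq\; \sigma(f) + (\delta - 1)\,|V_f^{++}| \;\geq\; \sigma(f) + \delta - 1.$$

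For the final unit I would observe that $B_f(v^*) \neq \emptyset$ (take any geodesic from $v^*$ to a vertex realizing $e(v^*)$) and that bn-independence forces $B_f(v^*)$ to be disjoint from every $I_f(v)$, since any $u \in I_f(v)$ lies in $PN_f(v)$ and so cannot also belong to $N_f(v^*)$. Therefore $n \geq \sum_{v \in V_f^+} |I_f(v)| + |B_f(v^*)| \geq \sigma(f) + \delta$, as required. The main obstacle is the interior bound $|I_f(v)| \geq \delta + f(v) - 1$: a geodesic alone yields only $|I_f(v)| \geq f(v)$, which recovers merely the known bound $\alpha_{bn}(G) \leq n - 1$, so one must combine the geodesic contribution with $N[v]$ and carefully account for their overlap (they share only $v$ and its successor on the geodesic) to extract the extra $\delta - 1$ vertices needed to reach the target.
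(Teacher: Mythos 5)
Your argument is correct, and it is genuinely different from the paper's. The paper proves the bound by induction on $n$: it takes an $\alpha_{bn}$-broadcast maximizing $|V_f^1|$, deletes $PN_f(v)$ for a broadcasting vertex $v$, applies the inductive hypothesis to $G'=G-PN_f(v)$, and then runs a case analysis on how the degree of a minimum-degree vertex of $G'$ relates to $\delta(G)$ (splitting on whether that vertex lies in $B_f(v)$ and on the size of $|PN_f(v)\cap N_G(u)|$). You instead give a direct, non-inductive double count: the strict interiors $I_f(v)=N_f(v)\setminus B_f(v)$ are pairwise disjoint because each is contained in $PN_f(v)$; the interior of any $v$ with $f(v)\geq 2$ contains all of $N[v]$ plus the $f(v)-2$ geodesic vertices at distances $2,\dots,f(v)-1$, giving $|I_f(v)|\geq \delta+f(v)-1$; and the nonempty boundary $B_f(v^*)$ of one strong broadcaster is disjoint from every interior (from its own by definition, from the others because their interiors are private). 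Summing gives $n\geq \sigma(f)+\delta$, with the all-strength-one case handled by $\sigma(f)=|V_f^+|\leq\alpha(G)\leq n-\delta(G)$. All the individual claims check out (in particular, adjacent broadcasting vertices are impossible under bn-independence, so $V_f^1=V_f^+$ really does give an independent set, and a vertex at distance exactly $f(v^*)$ exists since $f(v^*)\leq e(v^*)$). What your approach buys is a shorter, self-contained proof that avoids induction and the paper's delicate bookkeeping of how $\delta$ can drop when $PN_f(v)$ is removed; what it gives up is the structural information the paper's proof extracts along the way (e.g., that one may assume $PB_f(v)=\emptyset$ for all $v\in V_f^{++}$ in a suitable extremal broadcast), which is of independent use elsewhere in arguments of this type.
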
 

\begin{proof}

The result is clear if $|V(G)|=1$, so assume the theorem holds for all graphs $G$ such that $|V(G)|\leq n-1$ and consider a graph $G$ of order $n$. 
Let $f$ be an $\alpha_{bn}$-broadcast on $G$ with $|V_{f}^1|$ maximum. For any $v\in V_f^{++}$, $PB_f(v)=\emptyset$, otherwise a new boundary independent broadcast of equal weight could be constructed by reducing $f(v)$ by 1, and broadcasting at strength 1 from a vertex in the $f$-private boundary of $v$. 

For some $v\in V_f^{+}$, consider the graph $G'=G-PN_f(v)$ of order $n'$, and let $f_{G'}$ be $f$ restricted to $G'$. By induction, $\sigma(f_{G'})\leq \alpha_{bn}(G')\leq n'-\delta(G')$, 
hence $\alpha_{bn}(G)=\sigma(f_{G'})+f(v)\leq f(v)+n'-\delta(G')$.

Consider a vertex $u$ of minimum degree in $G'$. First, suppose $u\in B_f(v)$ and let $k=|PN_f(v)\cap N_G(u)|$. Let $P$ be a $u-v$ geodesic in $G$. Since $P$ has length $f(v)$ and $u$ is adjacent to $k-1$ vertices in $PN_f(v)-V(P$), we have that $k-1+f(v)\leq |PN_f(v)|$. In particular, if $k+f(v)= |PN_f(v)|+1$, then either $PN_f(v)$ consists of a path on $ f(v)$ vertices (and thus $d(v)=1$), or $f(v)=2$ and $d_G(v)=k$. In either case, since $u$ hears more than one broadcasting vertex under $f$,  $\delta(G)<d_G(u)$. Thus,

\begin{center}
    $\alpha_{bn}(G)
\leq f(v)+n'-\delta(G')
\leq f(v)+n'-d_G(u)+k\leq n+1-d_G(u)\leq n-\delta(G)$.
\end{center}

Otherwise, if $k+f(v)\leq |PN_f(v)|$, 

\[ 
\alpha_{bn}(G)\leq f(v)+n'-d_G(u)+k\leq  n-\delta(G).
\]

Finally, if $u\notin B_f(v)$, then $d_G(u)=d_{G'}(u)$, 
hence $\delta(G)\leq \delta(G')$ and $\alpha_{bn}(G)\leq n-\delta(G')\leq n-\delta(G)$ as desired. 
\end{proof}

\section{The Hardness of Determining $\alpha_{bn}(G)$}

Given a graph $G$ and a positive integer $k$, we may verify that a broadcast $f$ on $G$ of cost at least $k$ is boundary independent in polynomial time by checking that $f(u)+f(v)\leq d_G(u, v)$ for every pair of distinct broadcasting vertices $u, v\in V_f^+$. It follows that the maximum bn-independent broadcast problem is NP.
We proceed to show that the problem is NP-complete by a transformation from the independent set problem on $G$, which was shown to be NP-complete by Karp \cite{karp}, to the maximum bn-independent broadcast problem on a corresponding graph $C_G$.

The \textit{corona} $G \bigodot H$ of graphs $G$ and $H$ is constructed from $G$ and $n=|V(G)|$ copies of $H$ by joining the $i$th vertex in $G$ by edges to every vertex in the $i$th copy of $H$. Let $C_G=G\bigodot K_1$.

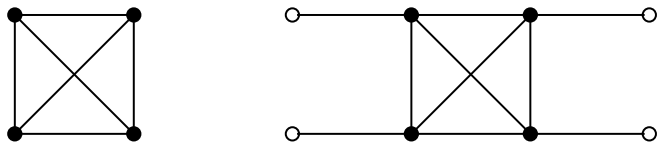
\begin{figure}[H]
    \centering

\tikzset{every picture/.style={line width=0.75pt}} 

\begin{tikzpicture}[x=0.75pt,y=0.75pt,yscale=-1,xscale=1]

\draw    (110,110) -- (170,110) ;
\draw [shift={(110,110)}, rotate = 0] [color={rgb, 255:red, 0; green, 0; blue, 0 }  ][fill={rgb, 255:red, 0; green, 0; blue, 0 }  ][line width=0.75]      (0, 0) circle [x radius= 3.35, y radius= 3.35]   ;
\draw    (170,170) -- (110,170) ;
\draw [shift={(170,170)}, rotate = 180] [color={rgb, 255:red, 0; green, 0; blue, 0 }  ][fill={rgb, 255:red, 0; green, 0; blue, 0 }  ][line width=0.75]      (0, 0) circle [x radius= 3.35, y radius= 3.35]   ;
\draw    (170,110) -- (170,170) ;
\draw [shift={(170,110)}, rotate = 90] [color={rgb, 255:red, 0; green, 0; blue, 0 }  ][fill={rgb, 255:red, 0; green, 0; blue, 0 }  ][line width=0.75]      (0, 0) circle [x radius= 3.35, y radius= 3.35]   ;
\draw    (110,170) -- (110,110) ;
\draw [shift={(110,170)}, rotate = 270] [color={rgb, 255:red, 0; green, 0; blue, 0 }  ][fill={rgb, 255:red, 0; green, 0; blue, 0 }  ][line width=0.75]      (0, 0) circle [x radius= 3.35, y radius= 3.35]   ;
\draw    (310,110) -- (370,110) ;
\draw [shift={(310,110)}, rotate = 0] [color={rgb, 255:red, 0; green, 0; blue, 0 }  ][fill={rgb, 255:red, 0; green, 0; blue, 0 }  ][line width=0.75]      (0, 0) circle [x radius= 3.35, y radius= 3.35]   ;
\draw    (370,170) -- (310,170) ;
\draw [shift={(370,170)}, rotate = 180] [color={rgb, 255:red, 0; green, 0; blue, 0 }  ][fill={rgb, 255:red, 0; green, 0; blue, 0 }  ][line width=0.75]      (0, 0) circle [x radius= 3.35, y radius= 3.35]   ;
\draw    (370,110) -- (370,170) ;
\draw [shift={(370,110)}, rotate = 90] [color={rgb, 255:red, 0; green, 0; blue, 0 }  ][fill={rgb, 255:red, 0; green, 0; blue, 0 }  ][line width=0.75]      (0, 0) circle [x radius= 3.35, y radius= 3.35]   ;
\draw    (310,170) -- (310,110) ;
\draw [shift={(310,170)}, rotate = 270] [color={rgb, 255:red, 0; green, 0; blue, 0 }  ][fill={rgb, 255:red, 0; green, 0; blue, 0 }  ][line width=0.75]      (0, 0) circle [x radius= 3.35, y radius= 3.35]   ;
\draw    (252.35,170) -- (310,170) ;
\draw [shift={(250,170)}, rotate = 0] [color={rgb, 255:red, 0; green, 0; blue, 0 }  ][line width=0.75]      (0, 0) circle [x radius= 3.35, y radius= 3.35]   ;
\draw    (252.35,110) -- (310,110) ;
\draw [shift={(250,110)}, rotate = 0] [color={rgb, 255:red, 0; green, 0; blue, 0 }  ][line width=0.75]      (0, 0) circle [x radius= 3.35, y radius= 3.35]   ;
\draw    (427.65,170) -- (370,170) ;
\draw [shift={(430,170)}, rotate = 180] [color={rgb, 255:red, 0; green, 0; blue, 0 }  ][line width=0.75]      (0, 0) circle [x radius= 3.35, y radius= 3.35]   ;
\draw    (427.65,110) -- (370,110) ;
\draw [shift={(430,110)}, rotate = 180] [color={rgb, 255:red, 0; green, 0; blue, 0 }  ][line width=0.75]      (0, 0) circle [x radius= 3.35, y radius= 3.35]   ;
\draw    (110,110) -- (170,170) ;
\draw    (310,110) -- (370,170) ;
\draw    (170,110) -- (110,170) ;
\draw    (370,110) -- (310,170) ;

\end{tikzpicture}

    \caption{The construction of the corona $K_4 \bigodot K_1$.}
    \label{fig:my_label}
\end{figure}

\begin{proposition}
Let $G$ be a connected graph on $n\geq 2$ vertices and let  $C_G=G\bigodot K_1$. Then $\alpha_{bn}(C_G)= n+\alpha(G)$.
\label{prop:coronas}
\end{proposition}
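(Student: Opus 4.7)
The proof splits into two directions. For the lower bound $\alpha_{bn}(C_G) \geq n + \alpha(G)$, let $I$ be a maximum independent set of $G$ and let $u_i$ denote the leaf of $C_G$ attached to $v_i$. Define the broadcast $f$ by $f(u_i) = 2$ when $v_i \in I$, $f(u_i) = 1$ otherwise, and $f(v_i) = 0$ for every $i$. Its cost is $2|I| + (n - |I|) = n + \alpha(G)$. To verify boundary independence I would argue case by case on two broadcasting leaves $u_i, u_j$: since $d_{C_G}(u_i, u_j) = d_G(v_i, v_j) + 2$, in each of the three $I$-membership scenarios the interiors (equal to $\{u_i, v_i\}$ or $\{u_i\}$) remain pairwise disjoint while shared $f$-neighbourhood is confined to boundaries.

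For the upper bound, let $f$ be an $\alpha_{bn}$-broadcast and set $V_f^{++} = \{w : f(w) \geq 2\}$. Define $\phi : V_f^+ \to V(G)$ by $\phi(w) = v_i$ whenever $w \in \{v_i, u_i\}$. The first claim is that $\phi|_{V_f^{++}}$ is injective with independent image in $G$. A short case analysis verifies this: in every configuration with $\phi(w) = v_i$, $\phi(w') = v_j$, $w, w' \in V_f^{++}$, and $v_i = v_j$ or $v_i v_j \in E(G)$, either one broadcaster is forced into the $f$-neighbourhood of another (contradicting hearing independence), or, in the subtlest case $w = u_i$ and $w' = u_j$ both broadcast at strength $2$ with $v_i v_j \in E(G)$, the hub $v_j$ lies in the interior of $u_j$'s broadcast and simultaneously in $N_f(u_i)$, contradicting the private-neighbourhood condition. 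Letting $d$ be the number of leaves broadcasting at strength at least $2$, we obtain $d \leq |V_f^{++}| \leq \alpha(G)$.

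The heart of the argument is a counting step. For each $w \in V_f^+$ set $P(w) = \{j : \{v_j, u_j\} \cap (N_f(w) \setminus B_f(w)) \neq \emptyset\}$. The key lemma is that the sets $P(w)$ are pairwise disjoint: if $v_j \in N_f(w) \setminus B_f(w)$ then $v_j \in PN_f(w)$, and if some $w' \neq w$ had $u_j \in N_f(w') \setminus B_f(w')$ then $d_{C_G}(w', v_j) \leq d_{C_G}(w', u_j) + 1 \leq f(w')$ would place $v_j \in N_f(w')$, contradicting $v_j \in PN_f(w)$; the three other sub-cases are symmetric or follow from the disjointness of interiors. Hence $\sum_{w \in V_f^+} |P(w)| \leq n$. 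Using $f(w) \leq e_{C_G}(w)$ together with a geodesic in $C_G$ realizing this eccentricity, I would verify $|P(w)| \geq f(w)$ when $w$ is a hub or a strength-$1$ leaf, and $|P(w)| \geq f(w) - 1$ when $w$ is a leaf with $f(w) \geq 2$. Combining,
\[
\sigma(f) \;\leq\; \sum_{w \in V_f^+} |P(w)| + d \;\leq\; n + d \;\leq\; n + \alpha(G).
\]

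The main obstacle is discovering this charging scheme. A naive per-vertex charge to $V(C_G)$ yields only $\sigma(f) \leq 2n$; the insight is that boundary independence so tightly couples each hub $v_j$ with its attached leaf $u_j$ that the pair $\{v_j, u_j\}$ cannot straddle two distinct interiors. This collapses the bound from $2n$ to $n$, and together with the independent-set constraint on $V_f^{++}$ established in the second paragraph produces the sharp value $n + \alpha(G)$.
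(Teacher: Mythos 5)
Your proof is correct, but your upper bound argument is genuinely different from the paper's. The lower bound construction (strength $2$ on leaves over a maximum independent set, strength $1$ elsewhere) is identical. For the upper bound, the paper normalizes the optimal broadcast: it first shows no hub broadcasts (shifting a hub's broadcast to its pendant leaf with strength increased by one would beat optimality), then argues that any leaf broadcasting at strength $k\geq 3$ can be cut down to strength $2$ while activating the $k-2$ silent leaves it was dominating, so that some maximum broadcast has every leaf at strength $1$ or $2$; the cost is then visibly $n+|R|$ with $R$ independent. Your route instead leaves $f$ untouched and runs a charging argument: the pairs $\{v_j,u_j\}$ met by the interior of each broadcast are pairwise disjoint across broadcasters (by the private-interior property), each broadcaster's interior meets at least $f(w)$ pairs except for a deficit of one at leaves of strength at least $2$, and that deficit is absorbed by your separate observation that such leaves inject into an independent set of $G$. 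What each approach buys: the paper's is shorter on the page but rests on a somewhat delicate compensation step (one must check that the leaves activated to offset different reduced broadcasters do not collide and that the modified broadcast stays boundary independent); your charging scheme avoids modifying $f$ entirely and makes the bound $n+\alpha(G)$ fall out of two clean disjointness statements, at the price of more case analysis. The steps you flag as ``I would verify'' (the geodesic count giving $|P(w)|\geq f(w)$ or $f(w)-1$, using that internal vertices of a geodesic in $C_G$ are hubs) do go through, so there is no gap.
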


\begin{proof}
Let $l_1,...,l_n$ be a labelling of the leaves of $C_G$, and let $v_1, ..., v_n$ be a labelling of the remaining vertices such that $l_i$ is adjacent to $v_i$ for all $i$. 
Let $S\subseteq \{v_1, ..., v_n\}$ be a maximum independent set on the subgraph of $C_G$ corresponding to $G$. 

Define a broadcast $f$ on $C_G$ by
\begin{center}
    $f(u)=
 \begin{cases}
2 & \text{ if } u=l_i \text{ for some \textit{i} and } v_i\in S\\ 
1 & \text{ if } u=l_i \text{ for some \textit{i} and } v_i\notin S\\ 
0 & \text{ otherwise.} 
\end{cases}$
\end{center}
Then $\sum_{u}f(u) =n+\alpha(G)$.
Since only leaves broadcast, and since $d_{C_G}(l_i, l_j)\leq f(l_i)+f(l_j)$ for any pair of leaves $l_i\neq l_j$, $f$ is boundary independent. Therefore $\alpha_{bn}(C_G)\geq n+\alpha(G)$. 

To show that $\alpha_{bn}(C_G)\leq n+\alpha(G)$, let $f$ be a $\alpha_{bn}$-broadcast on $C_G$. If a vertex $v_i$ is broadcasting, the bn-independent broadcast $f'$ on $C_G$ defined by $f'(l_i)=f(v_i)+1$, $f'(v_i)=0$, and $f'(u)=f(u)$ otherwise has greater cost than $f$, a contradiction. It follows that $V_f^+\subseteq\{ l_1, ..., l_n\}$.  

Suppose some leaf $l_i$ broadcasts at strength $k\geq 3$ under $f$. Since $f(l_i)\leq e(l_i)$, there exist at least $k-2$ leaves which hear $l_i$ from a distance greater than $2$. 
Define a new maximum bn-independent broadcast
$f'$ by
\begin{center}
      $f'(l_i)=
 \begin{cases}
f(l_i) & \text{ if } f(l_i)\in \{1, 2\}
\\ 
2 & \text{ if } f(l_i)=k>2
\\ 
1 & \text{ if } f(l_i)=0 
\end{cases}$
\end{center}
and consider $R\subseteq\{v_1, ..., v_n\}$, where $v_i\in R$ if and only if $f'(l_i)=2$. Two vertices adjacent to leaves broadcasting at strength 2 cannot be adjacent, hence $R$ is an independent set on the subgraph of $C_G$ corresponding to $G$. Since every leaf broadcasts at strength 1 or 2 under $f'$, we have that $|R|=\alpha_{bn}(C_G)-n$. Therefore $\alpha_{bn}(C_G)\leq n+|R|\leq n+\alpha(G)$.
\end{proof}

\begin{corollary}
Let $G$ be a graph and $k$ a positive integer. The problem of deciding whether $\alpha_{bn}(G)\geq k$ is NP-complete.
\end{corollary}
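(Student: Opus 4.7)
The plan is to combine the NP verification already sketched in the paragraph preceding Proposition~\ref{prop:coronas} with a polynomial-time many-one reduction from the independent set problem, using the corona construction of that proposition essentially verbatim.

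First I would reaffirm membership in NP: a candidate broadcast $f$ on $G$ of total cost at least $k$ serves as a polynomial-size certificate, since boundary independence can be checked by verifying $f(u)+f(v)\leq d_G(u,v)$ for every pair of broadcasting vertices after computing all pairwise distances, and $\sigma(f)\geq k$ is trivially checkable. This was already argued in the text.

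For NP-hardness I would reduce from the independent set problem, shown to be NP-complete by Karp~\cite{karp} and standardly known to remain NP-complete on connected graphs of order at least $2$. Given an instance $(G,k')$ of that problem with $G$ connected and $n=|V(G)|\geq 2$, I would construct the corona $C_G=G\bigodot K_1$ by attaching a new pendant leaf to each vertex of $G$, and output the instance $(C_G,\,n+k')$ of the maximum bn-independent broadcast problem. The construction is clearly polynomial: $|V(C_G)|=2n$ and $|E(C_G)|=|E(G)|+n$, and the integer $n+k'$ is encoded in size linear in the input. By Proposition~\ref{prop:coronas}, $\alpha_{bn}(C_G)=n+\alpha(G)$, so the equivalence
\[
\alpha_{bn}(C_G)\geq n+k'\quad\Longleftrightarrow\quad \alpha(G)\geq k'
\]
is immediate, establishing correctness of the reduction.

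The main obstacle is already resolved by Proposition~\ref{prop:coronas}; nothing nontrivial remains except noting the standard fact that independent set stays NP-complete on connected graphs (otherwise one could apply the reduction component by component), and handling the trivial edge case $n=1$ separately. Combined with membership in NP, this yields NP-completeness.
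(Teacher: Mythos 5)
Your proposal is correct and follows the same route as the paper: membership in NP via the pairwise-distance check, plus NP-hardness via the corona construction of Proposition~\ref{prop:coronas}, mapping $(G,k')$ to $(C_G,\,n+k')$. You spell out the reduction and the connectivity caveat more explicitly than the paper, which simply cites the proposition, but the argument is the same.
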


\begin{proof}
We have already observed that the maximum bn-independent broadcast problem is NP. As the transformation described in Proposition \ref{prop:coronas} can be carried out in polynomial time, the problem is NP-complete. 
\end{proof}

It follows that for any graph class $\mathcal{C}$ for which $G\in \mathcal{C}$ implies $C_G\in \mathcal{C}$, if the independent set problem is known to be NP-complete for all graphs in $\mathcal{C}$, then so is the maximum bn-independent set problem.

Graph classes for which the independent set problem is known to be NP-complete are listed in \cite{graphclasses}. In particular, the maximum bn-independent set problem is NP-complete for planar and toroidal graphs, graphs with maximum degree $\Delta \in \{3, 4, 5, 6\}$, triangle-free and $C_n$-free graphs for $n\in \{4, 5, 6\}$, $K_n$-free graphs for $n\in\{4, 6, 7\}$, and house-free graphs. 

\section{Trees}

Let $T$ be a tree. Its \textit{branch-leaf representation} $\mathcal{BL}(T)$, also known as the \textit{homeomorphic reduction} of $T$, is obtained by successively removing a vertex of degree 2 and adding an edge between its two neighbours until no such vertices remain. Deleting all leaves yields the \textit{branch representation} of $T$, denoted $\mathcal{B}(T)$.

\begin{figure}[H]
    \centering

\tikzset{every picture/.style={line width=0.75pt}} 

\begin{tikzpicture}[x=0.75pt,y=0.75pt,yscale=-1,xscale=1]

\draw    (120,50) -- (150,90) ;
\draw [shift={(150,90)}, rotate = 53.13] [color={rgb, 255:red, 0; green, 0; blue, 0 }  ][fill={rgb, 255:red, 0; green, 0; blue, 0 }  ][line width=0.75]      (0, 0) circle [x radius= 3.35, y radius= 3.35]   ;
\draw [shift={(120,50)}, rotate = 53.13] [color={rgb, 255:red, 0; green, 0; blue, 0 }  ][fill={rgb, 255:red, 0; green, 0; blue, 0 }  ][line width=0.75]      (0, 0) circle [x radius= 3.35, y radius= 3.35]   ;
\draw    (120,90) -- (120,50) ;
\draw [shift={(120,90)}, rotate = 270] [color={rgb, 255:red, 0; green, 0; blue, 0 }  ][fill={rgb, 255:red, 0; green, 0; blue, 0 }  ][line width=0.75]      (0, 0) circle [x radius= 3.35, y radius= 3.35]   ;
\draw    (120,170) -- (120,130) ;
\draw [shift={(120,170)}, rotate = 270] [color={rgb, 255:red, 0; green, 0; blue, 0 }  ][fill={rgb, 255:red, 0; green, 0; blue, 0 }  ][line width=0.75]      (0, 0) circle [x radius= 3.35, y radius= 3.35]   ;
\draw    (120,130) -- (120,90) ;
\draw [shift={(120,130)}, rotate = 270] [color={rgb, 255:red, 0; green, 0; blue, 0 }  ][fill={rgb, 255:red, 0; green, 0; blue, 0 }  ][line width=0.75]      (0, 0) circle [x radius= 3.35, y radius= 3.35]   ;
\draw    (90,90) -- (120,50) ;
\draw [shift={(90,90)}, rotate = 306.87] [color={rgb, 255:red, 0; green, 0; blue, 0 }  ][fill={rgb, 255:red, 0; green, 0; blue, 0 }  ][line width=0.75]      (0, 0) circle [x radius= 3.35, y radius= 3.35]   ;
\draw    (60,130) -- (90,90) ;
\draw [shift={(60,130)}, rotate = 306.87] [color={rgb, 255:red, 0; green, 0; blue, 0 }  ][fill={rgb, 255:red, 0; green, 0; blue, 0 }  ][line width=0.75]      (0, 0) circle [x radius= 3.35, y radius= 3.35]   ;
\draw    (170,130) -- (150,90) ;
\draw [shift={(170,130)}, rotate = 243.43] [color={rgb, 255:red, 0; green, 0; blue, 0 }  ][fill={rgb, 255:red, 0; green, 0; blue, 0 }  ][line width=0.75]      (0, 0) circle [x radius= 3.35, y radius= 3.35]   ;
\draw    (150,130) -- (150,90) ;
\draw [shift={(150,130)}, rotate = 270] [color={rgb, 255:red, 0; green, 0; blue, 0 }  ][fill={rgb, 255:red, 0; green, 0; blue, 0 }  ][line width=0.75]      (0, 0) circle [x radius= 3.35, y radius= 3.35]   ;
\draw    (60,130) -- (80,170) ;
\draw [shift={(80,170)}, rotate = 63.43] [color={rgb, 255:red, 0; green, 0; blue, 0 }  ][fill={rgb, 255:red, 0; green, 0; blue, 0 }  ][line width=0.75]      (0, 0) circle [x radius= 3.35, y radius= 3.35]   ;
\draw    (40,170) -- (60,130) ;
\draw [shift={(40,170)}, rotate = 296.57] [color={rgb, 255:red, 0; green, 0; blue, 0 }  ][fill={rgb, 255:red, 0; green, 0; blue, 0 }  ][line width=0.75]      (0, 0) circle [x radius= 3.35, y radius= 3.35]   ;
\draw    (60,170) -- (60,130) ;
\draw [shift={(60,170)}, rotate = 270] [color={rgb, 255:red, 0; green, 0; blue, 0 }  ][fill={rgb, 255:red, 0; green, 0; blue, 0 }  ][line width=0.75]      (0, 0) circle [x radius= 3.35, y radius= 3.35]   ;
\draw    (300,50) -- (340,90) ;
\draw [shift={(300,50)}, rotate = 45] [color={rgb, 255:red, 0; green, 0; blue, 0 }  ][fill={rgb, 255:red, 0; green, 0; blue, 0 }  ][line width=0.75]      (0, 0) circle [x radius= 3.35, y radius= 3.35]   ;
\draw    (300,170) -- (300,50) ;
\draw [shift={(300,170)}, rotate = 270] [color={rgb, 255:red, 0; green, 0; blue, 0 }  ][fill={rgb, 255:red, 0; green, 0; blue, 0 }  ][line width=0.75]      (0, 0) circle [x radius= 3.35, y radius= 3.35]   ;
\draw    (240,130) -- (300,50) ;
\draw [shift={(240,130)}, rotate = 306.87] [color={rgb, 255:red, 0; green, 0; blue, 0 }  ][fill={rgb, 255:red, 0; green, 0; blue, 0 }  ][line width=0.75]      (0, 0) circle [x radius= 3.35, y radius= 3.35]   ;
\draw    (340,90) -- (360,130) ;
\draw [shift={(360,130)}, rotate = 63.43] [color={rgb, 255:red, 0; green, 0; blue, 0 }  ][fill={rgb, 255:red, 0; green, 0; blue, 0 }  ][line width=0.75]      (0, 0) circle [x radius= 3.35, y radius= 3.35]   ;
\draw [shift={(340,90)}, rotate = 63.43] [color={rgb, 255:red, 0; green, 0; blue, 0 }  ][fill={rgb, 255:red, 0; green, 0; blue, 0 }  ][line width=0.75]      (0, 0) circle [x radius= 3.35, y radius= 3.35]   ;
\draw    (340,170) -- (340,90) ;
\draw [shift={(340,170)}, rotate = 270] [color={rgb, 255:red, 0; green, 0; blue, 0 }  ][fill={rgb, 255:red, 0; green, 0; blue, 0 }  ][line width=0.75]      (0, 0) circle [x radius= 3.35, y radius= 3.35]   ;
\draw    (520,90) -- (490,50) ;
\draw [shift={(490,50)}, rotate = 233.13] [color={rgb, 255:red, 0; green, 0; blue, 0 }  ][fill={rgb, 255:red, 0; green, 0; blue, 0 }  ][line width=0.75]      (0, 0) circle [x radius= 3.35, y radius= 3.35]   ;
\draw [shift={(520,90)}, rotate = 233.13] [color={rgb, 255:red, 0; green, 0; blue, 0 }  ][fill={rgb, 255:red, 0; green, 0; blue, 0 }  ][line width=0.75]      (0, 0) circle [x radius= 3.35, y radius= 3.35]   ;
\draw    (430,130) -- (490,50) ;
\draw [shift={(430,130)}, rotate = 306.87] [color={rgb, 255:red, 0; green, 0; blue, 0 }  ][fill={rgb, 255:red, 0; green, 0; blue, 0 }  ][line width=0.75]      (0, 0) circle [x radius= 3.35, y radius= 3.35]   ;
\draw    (150,170) -- (150,130) ;
\draw [shift={(150,170)}, rotate = 270] [color={rgb, 255:red, 0; green, 0; blue, 0 }  ][fill={rgb, 255:red, 0; green, 0; blue, 0 }  ][line width=0.75]      (0, 0) circle [x radius= 3.35, y radius= 3.35]   ;
\draw    (240,130) -- (260,170) ;
\draw [shift={(260,170)}, rotate = 63.43] [color={rgb, 255:red, 0; green, 0; blue, 0 }  ][fill={rgb, 255:red, 0; green, 0; blue, 0 }  ][line width=0.75]      (0, 0) circle [x radius= 3.35, y radius= 3.35]   ;
\draw    (220,170) -- (240,130) ;
\draw [shift={(220,170)}, rotate = 296.57] [color={rgb, 255:red, 0; green, 0; blue, 0 }  ][fill={rgb, 255:red, 0; green, 0; blue, 0 }  ][line width=0.75]      (0, 0) circle [x radius= 3.35, y radius= 3.35]   ;
\draw    (240,170) -- (240,130) ;
\draw [shift={(240,170)}, rotate = 270] [color={rgb, 255:red, 0; green, 0; blue, 0 }  ][fill={rgb, 255:red, 0; green, 0; blue, 0 }  ][line width=0.75]      (0, 0) circle [x radius= 3.35, y radius= 3.35]   ;

\draw (111,182.4) node [anchor=north west][inner sep=0.75pt]    {$T$};
\draw (276,182.4) node [anchor=north west][inner sep=0.75pt]    {$\mathcal{BL}( T)$};
\draw (471,182.4) node [anchor=north west][inner sep=0.75pt]    {$\mathcal{B}( T)$};
\draw (111,32.4) node [anchor=north west][inner sep=0.75pt]  [font=\small]  {$v$};
\draw (481,32.4) node [anchor=north west][inner sep=0.75pt]  [font=\small]  {$v$};
\draw (291,32.4) node [anchor=north west][inner sep=0.75pt]  [font=\small]  {$v$};

\end{tikzpicture}

    \caption{The branch-leaf representation and branch representation of a tree $T$.}
    \label{fig:branch_leaf}
\end{figure}
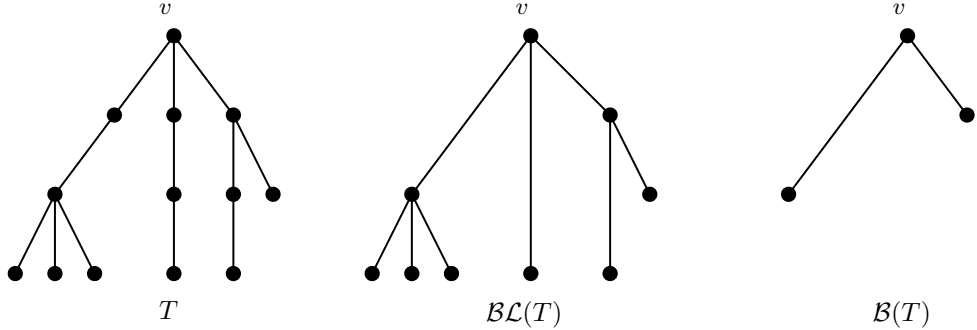

We partition the vertices of $T$ into sets $B_T, L_T, $ and $\tau_T$, where $B_T$ denotes the set of branch vertices of $T$, $L_T$ the set of leaves, and $\tau_T$ the set of vertices of degree 2, called the \textit{trunks} of $T$. We further define a subset $R_T$ of $B_T$ consisting of branch vertices adjacent to at most one leaf in $\mathcal{BL}(T)$. In other words, a branch vertex $b$ belongs to $R_T$ if there exists at most one leaf $l\in L_T$ such that for every $b'\in B_T-b$, the unique $b'-l$ path contains $b$. 
In Figure \ref{fig:branch_leaf} above, $|B_T|=3$ and $R_T=\{v\}$.

Maximum boundary independent broadcasts on trees were studied by Neilson in \cite{neilsonphd}, who determined that $\alpha_{bn}(T)\leq n-|B_T|+|R_T|$ for any tree $T$ with $B_T\neq \emptyset$. Neilson asked whether this upper bound could be improved to $\alpha_{bn}(T)\leq n-|B_T|+\alpha(T[R_T])$, where $T[R_T]$ is the subgraph of $T$ induced by $R_T$.

A \textit{caterpillar} is a tree with a diametrical path $D$ such that every vertex lies on $D$ or is adjacent to a vertex on $D$. 
We proceed to show that $\alpha_{bn}(C)\leq n-|B_C|+\alpha(C[R_C])$ for all caterpillars $C$, and determine $\alpha_{bn}(C)$ exactly for certain subclasses of caterpillars. Observe that if $C$ is a caterpillar, $C[R_C]$ is a path or a forest of paths.

The following lemma will be useful throughout this section.

\begin{lemma} \label{lemma:leaf_hears_f2} \textup{\cite{neilsonphd}} If $f$ is an $\alpha_{bn}$-broadcast on a tree $T$, no leaf of $T$ hears a broadcast from any non-leaf vertex. 
\end{lemma}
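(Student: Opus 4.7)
The plan is to argue by contradiction. Suppose that $f$ is an $\alpha_{bn}$-broadcast on $T$ and that some leaf $\ell$ hears $f$ from a broadcaster $v \in V_f^+$ that is not a leaf, so $1 \leq d(\ell,v) \leq f(v)$. The goal is to produce a bn-independent broadcast $f'$ on $T$ with $\sigma(f') > \sigma(f)$, contradicting maximality.

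A useful preliminary observation, to be established first, is that in a bn-independent broadcast a leaf $\ell$ can lie in the $f$-ball of at most one broadcaster. Indeed, if $\ell \in N_f(v) \cap N_f(w)$ for distinct $v, w \in V_f^+$, then bn-independence would force $\ell \in B_f(v) \cap B_f(w)$, i.e.\ $d(\ell,v) = f(v)$ and $d(\ell,w) = f(w)$; the unique neighbour $u$ of $\ell$ would then satisfy $d(u,v) = f(v) - 1$ and $d(u,w) = f(w) - 1$, so $u$ would be a non-boundary vertex in both $N_f(v)$ and $N_f(w)$, making the edge $\ell u$ covered by both $v$ and $w$ -- contradicting the fact (itself a consequence of bn-independence) that no edge is covered by two broadcasters. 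Consequently, under our standing assumption, $\ell$ hears only $v$ in $f$ and $d(\ell,w) > f(w)$ for every $w \in V_f^+ \setminus \{v\}$.

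The natural candidate construction is to shift $v$'s broadcast to the leaf and lengthen it: set $f'(v) = 0$, $f'(\ell) = f(v) + d(\ell,v)$ (capped at $e(\ell)$ if necessary), and $f'(x) = f(x)$ otherwise, which gives $\sigma(f') = \sigma(f) + d(\ell,v) > \sigma(f)$. Validity $f'(\ell) \leq e(\ell)$ should follow from $v$ being non-leaf, which yields a neighbour $v'$ of $v$ off the $v$-$\ell$ geodesic and the inequality $e(\ell) \geq d(\ell,v) + 1$; combined with $f(v) \leq e(v)$ and the depth of the opposite subtree of $v$ this typically strengthens to $e(\ell) \geq d(\ell,v) + f(v)$. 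The hard part is bn-independence of $f'$. The triangle inequality gives $N_{f'}(\ell) \supseteq N_f(v)$, with equality on the side of $v$ opposite $\ell$; since $v$ itself lies non-boundarily in its own $f$-ball, bn-independence of $f$ already forbids any other broadcaster from reaching $v$, which controls overlap on that side. The main obstacle is the subcase in which the enlarged ball $N_{f'}(\ell)$ contains another broadcaster $w \in V_f^+ \setminus \{v\}$: because $w$ is non-boundary in its own ball, $w$ would then be heard by $\ell$ and by $w$ without lying on both boundaries, violating bn-independence of $f'$. Resolving this subcase is the technical heart of the proof; one either caps $f'(\ell)$ just below $d(\ell,w)$ and transfers the recovered weight to an $f$-undominated leaf, or replaces $v$'s single broadcast with several smaller leaf-broadcasts, one per subtree of $T - v$, so that each new broadcast stays inside the region previously covered by $v$ and no bad overlap is introduced. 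The tree structure, together with the fact that broadcasts placed at leaves waste no strength beyond themselves, makes these local adjustments always net a strict weight increase.
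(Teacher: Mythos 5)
The paper does not actually prove this lemma; it is quoted from Neilson's thesis \cite{neilsonphd}, so there is no in-paper proof to compare against. Judged on its own, your attempt has the right overall shape (move the broadcast from the non-leaf $v$ to the leaf $\ell$ and lengthen it to gain $d(\ell,v)$ in weight), and your preliminary observation that a leaf can hear at most one broadcaster under a bn-independent broadcast is correct and cleanly argued. But the attempt is a sketch with two genuine gaps, one of which you explicitly flag and do not close.

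First, the validity step. The inequality $e(\ell)\geq d(\ell,v)+f(v)$ does \emph{not} follow just from $v$ being a non-leaf: if the eccentric vertex of $v$ lies in the same branch of $T-v$ as $\ell$ and the other branches at $v$ are shallow, one can have $e(\ell)<d(\ell,v)+f(v)$ even though $d(\ell,v)\leq f(v)\leq e(v)$ (e.g.\ $v$ adjacent to a leaf $b$ and to $u_1$, with $\ell$ and a path of length $2$ both hanging off $u_1$, and $f(v)=2$). Such an $f$ is not maximum, so the lemma is safe, but that means the validity of $f'$ must be extracted from the maximality of $f$ or handled via the cap --- and once you cap $f'(\ell)$ below $d(\ell,v)+f(v)$, your claimed strict increase $\sigma(f')>\sigma(f)$ is no longer automatic and needs a separate argument. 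Second, and more importantly, you yourself identify the bn-independence of $f'$ in the presence of other broadcasters as ``the technical heart of the proof'' and then only gesture at two possible repairs (capping and redistributing weight to an undominated leaf, or splitting $v$'s broadcast into several leaf broadcasts, one per branch) without carrying either out or verifying that the resulting broadcast is bn-independent and strictly heavier. Until one of those constructions is executed in full --- including checking that the redistributed weight really lands on vertices that can legally receive it and that no new boundary overlap is created --- the proof is not complete.
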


In particular, if $f$ is a maximum boundary independent broadcast on a caterpillar, only leaves and trunks broadcast.

Given a caterpillar $C$, its \textit{spine} is an (arbitrarily chosen) diametrical path. Label the leaves of $C$ as $l_1, ..., l_m$ such that $l_1$ and $l_m$ are the endpoints the spine and for all $i\leq j$, $d_C(l_1, l_i)\leq d_C(l_1, l_j)$. Call $l_2, ..., l_{m-1}$ the \textit{inner leaves} of $C$.

\begin{lemma}
\label{lemma:cat_endpoints_broadcast}
Let $f$ be an $\alpha_{bn}$-broadcast on a caterpillar $C$ such that $|V_f^1|$ is maximized. Then $l_1, l_m\in V_f^+$.

\end{lemma}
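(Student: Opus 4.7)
The plan is to argue by contradiction: assume $l_1 \notin V_f^+$ (the argument for $l_m$ is symmetric) and show that $f$ fails to be an $\alpha_{bn}$-broadcast with maximum $|V_f^1|$. I split on whether $l_1$ is dominated by $f$. If $l_1$ is undominated, I define $f'$ by $f'(l_1)=1$ and $f'(v)=f(v)$ otherwise. The only potential new overlap is at $l_1$'s unique neighbour $v_1$; but if some broadcaster $v$ had $v_1 \in N_f(v)\setminus B_f(v)$, then $d(v,l_1)=d(v,v_1)+1\le f(v)$ would force $l_1$ to be dominated by $v$, a contradiction. Hence $f'$ remains boundary independent with $\sigma(f')>\sigma(f)$, violating that $f$ is an $\alpha_{bn}$-broadcast.

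Assume now $l_1$ is dominated. By Lemma~\ref{lemma:leaf_hears_f2}, every broadcaster $w$ with $l_1\in N_f(w)$ is a leaf. I next argue that $l_1$ is heard by exactly one broadcaster: if $l_1\in N_f(w)\cap N_f(w')$ for distinct leaves $w,w'$, then $v_1$ lies strictly inside both balls (at distance one less than the respective strengths), which contradicts $v_1 \in PN_f(w)$ forced by boundary independence. Hence $l_1\in PN_f(w)$.

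If $l_1\in B_f(w)$, then $f(w)=d(l_1,w)\ge 2$ (since $w$ is a leaf distinct from the non-leaf $v_1$, using $\mathrm{diam}(C)\ge 2$), and the ``swap'' $f'(w)=f(w)-1$, $f'(l_1)=1$ does the job. Indeed, $l_1\notin N_{f'}(w)$, $v_1$ becomes the new boundary of $w$ and also lies on $l_1$'s boundary, and every non-boundary vertex of $w$ in $f'$ was already in $PN_f(w)$ and lies outside $N_{f'}(l_1)=\{l_1,v_1\}$. This gives an $\alpha_{bn}$-broadcast with $|V^1_{f'}|\ge |V_f^1|+1$, contradicting the choice of $f$.

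The main obstacle is ruling out the strictly overdominated case $l_1\in N_f(w)\setminus B_f(w)$. Write $f(w)=i+1+k$ with $k\ge 1$, where $w$ is attached to the spine vertex $u_i$ at distance $i+1$ from $l_1$. The idea is that $w$'s broadcast wastes $k$ units of strength reaching ``past'' $l_1$ into empty space, while hearing independence forces every other broadcaster to lie at distance at least $f(w)+1$ from $w$; consequently the entire spine segment from $l_1$ through $u_{2i+k-1}$ lies in $PN_f(w)$ and is insulated from other broadcasts. I would construct $f'$ by setting $f'(w)=i$ (so $l_1\notin N_{f'}(w)$), adding $f'(l_1)=1$, and redistributing the freed $k$ units as strength-$1$ broadcasts at alternating vertices of the now-uncovered portion of the spine on the $l_m$ side of $u_i$, giving $\sigma(f')=\sigma(f)$ and $|V^1_{f'}|>|V^1_f|$, the desired contradiction. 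The technically demanding step is the explicit boundary-independence check in this compensation case, especially at the interface near $u_{2i+k}$ where the redistributed broadcasts may meet the influence region of another broadcaster of $f$; the placements and strengths of the added broadcasts must be chosen with care to respect both hearing independence between the new broadcasters and the shared-boundary condition with $w$.
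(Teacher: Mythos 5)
Your first two cases are handled correctly, and in fact more carefully than the paper's own proof: the undominated case, the argument that $l_1$ hears exactly one broadcaster $w$ and that $w$ is a leaf, and the swap $f'(w)=f(w)-1$, $f'(l_1)=1$ when $l_1\in B_f(w)$ all check out, including the verification that the only vertices whose status changes ($l_1$ and $v_1$) lie in $PN_f(w)$ and therefore cannot disturb any other broadcaster. For what it is worth, the paper applies that single swap uniformly to every $w$ with $l_1\in N_f(w)$ and never separates out the overdominated case, even though the swap is not boundary independent there (after reducing $f(w)$ by one, $v_1$ is still strictly interior to $w$'s ball yet now also hears $l_1$); so the subtlety you isolate is real and is glossed over in the published argument.

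The genuine gap is exactly where you flag it: the case $l_1\in N_f(w)\setminus B_f(w)$, and your proposed repair does not close it. With $d(w,l_1)=i+1$ and $f(w)=i+1+k$, $k\ge 1$, setting $f'(w)=i$ frees $k+1$ units, one of which goes to $f'(l_1)=1$; but the spine segment that $w$ ceases to cover on the $l_m$ side is only $u_{2i},\dots,u_{2i+k}$, i.e.\ $k+1$ vertices. Strength-one broadcasts must be pairwise at distance at least $2$ to remain boundary independent, so an alternating placement yields at most $\lceil (k+1)/2\rceil$ units, which is less than the required $k$ for every $k\ge 3$; already for $k=2$ the count forces you to broadcast from $u_{2i+k}$, which lay on $B_f(w)$ and may hear another broadcaster, making it ineligible. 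So $\sigma(f')<\sigma(f)$ in general and no contradiction is reached. A cleaner first step is to observe that the broadcast of strength $i+1+k$ from $w$ and the broadcast of strength $2i+k$ from $l_1$ have \emph{identical} neighbourhoods and identical boundaries (both consist of the spine up to $u_{2i+k}$ together with the pendant leaves up to $u_{2i+k-1}$, with boundary $\{u_{2i+k}\}$ plus the leaves at $u_{2i+k-1}$), so exchanging one for the other preserves boundary independence and changes the cost by $i-1$. This kills $i\ge 2$ outright by maximality of $\sigma(f)$, but the residual case $i=1$, where the exchange is cost-neutral and does not change $|V_f^1|$, still needs a dedicated argument; neither your sketch nor the paper supplies one.
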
 

\begin{proof}
Suppose not. If $l_1\in N_f(v)$ for some $v\in V_f^+$, define a new broadcast $f'$ by $f'(v)=f(v-1)$, $f'(l_1)=1$, and $f'(w)=f(w)$ for all $w\neq v, l_1$. If some vertex $u$ does not hear $f'$, broadcasting at strength 1 from $u$ produces a boundary independent broadcast of greater cost than $f$, a contradiction. Therefore $f'$ is an $\alpha_{bn}$-broadcast on $C$. By Lemma \ref{lemma:leaf_hears_f2}, $v$ is a leaf, hence $d_C(l_1, v)\geq 2$ and so $v\notin V_f^1$. But then $|V_{f'}^1|>|V_f^1|$, a contradiction. 

It follows that $l_1\in V_f^+$. Similarly, $l_m\in V_f^+$. 
\end{proof}

\begin{theorem}
\label{theorem:caterpillars} 
If $C$ is a caterpillar, then $\alpha_{bn}(C)\leq n-|B_C|+\alpha(C[R_C])$.
\end{theorem}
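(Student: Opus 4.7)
The plan is to fix $f$ as an $\alpha_{bn}$-broadcast on $C$ with $|V_f^1|$ maximized, and to extract from $f$ an independent set in $C[R_C]$ whose size controls the excess of $\sigma(f)$ over $n-|B_C|$. By Lemma~\ref{lemma:leaf_hears_f2} and the remark following it, $V_f^+ \subseteq L_C \cup \tau_C$; by Lemma~\ref{lemma:cat_endpoints_broadcast} we may also assume $l_1,l_m \in V_f^+$. I would then define
\[
I = \{\, b \in R_C : f(p) \geq 2 \text{ for some pendant } p \text{ of } b \,\},
\]
and aim to establish both (a) $I$ is independent in $C[R_C]$, and (b) $\sigma(f) \leq n - |B_C| + |I|$. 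Since $|I| \leq \alpha(C[R_C])$, the theorem would then follow.

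For step~(a), I would first observe that in a caterpillar every $b \in R_C$ has exactly one pendant in $C$: the leaf neighbours of $b$ in $\mathcal{BL}(C)$ are $b$'s pendants in $C$ together with $l_1$ (when $b$ is the branch vertex closest to $l_1$) and $l_m$ (when $b$ is closest to $l_m$), so membership in $R_C$ forces both closest-branch roles to be absent and the pendant count to equal one. Hence if $b,b' \in I$ were adjacent in $C[R_C]$, they would be consecutive spine vertices with unique pendants $p,p'$, each broadcasting at strength at least~$2$. But the path $p,b,b',p'$ has length $3$, so $f(p)+f(p') \geq 4 > 3$, contradicting boundary independence.

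For step~(b) -- the main obstacle -- I plan a charging argument. For each broadcaster $v$ with $f(v)=k$, I would distribute the $k$ units of its cost to $k$ distinct vertices in $N_f(v)$ so that (i) every non-branch vertex of $C$ is charged at most once in total, and (ii) no branch vertex outside $I$ is ever charged. The essential rule is that when a pendant $p$ of $b \in R_C$ broadcasts at strength at least $2$, one unit of $p$'s cost is assigned to $b$; since $d_C(p,b)=1<f(p)$ we have $b \in N_f(p)\setminus B_f(p) \subseteq PN_f(p)$ by boundary independence, so no other broadcaster can also charge $b$, and $b \in I$ by construction. All remaining charges target non-branch vertices, and boundary independence makes the interiors of distinct broadcasts disjoint, so non-branch interior vertices can be charged without collision. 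Summing over all broadcasters then yields $\sigma(f) \leq n-|B_C|+|I|$.

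The main technical difficulty is verifying that the charging scheme is always feasible: for every broadcaster $v$ of strength $k$, one must exhibit $k$ distinct eligible vertices in $N_f(v)$ with no conflicts. I expect this to reduce to a case analysis by broadcaster type (endpoint leaf, trunk, pendant of strength $1$, pendant of strength at least $2$), with the delicate sub-cases being those in which $v$'s $f$-neighbourhood reaches across several branch vertices outside $I$: those branches are skipped in the charging, and we must find enough non-branch vertices (pendants of the skipped branches together with trunks along the spine) to absorb the remaining units. The hypothesis that $|V_f^1|$ is maximum should provide additional leverage, since then $PB_f(v)=\emptyset$ for every $v \in V_f^{++}$ (as in the proof of Theorem~\ref{theorem:2alpha}), restricting how far each broadcast can reach beyond its interior and ensuring that suitable charge targets remain available.
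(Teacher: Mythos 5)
Your architecture --- extract $I\subseteq R_C$ from the strength-$\geq 2$ pendants, show $I$ is independent, and prove $\sigma(f)\leq n-|B_C|+|I|$ by a charging argument --- is plausible and genuinely different from the paper's, and step (a) is correct. But the proposal has a real gap: the feasibility of the charging, which carries the entire weight of the bound, is left as an expectation, and the rule you actually state (charge only non-branch \emph{interior} vertices, plus $b\in I$) is not sufficient as written. Two concrete illustrations. First, an inner pendant $p$ of $b$ with $f(p)=3$ and $b$ having a single pendant has interior $\{p,b,b^-,b^+\}$, where $b^\pm$ are the spine neighbours of $b$; if $b^+$ were a branch vertex outside $I$ you would have too few eligible targets. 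This configuration is in fact impossible, but seeing that requires an argument you have not supplied: a leaf $q$ with $d(p,q)=f(p)$ cannot hear a second broadcaster (equality would force $q$ to be an interior vertex of a $p$--$r$ geodesic), so $q\in PB_f(p)$, contradicting $PB_f(v)=\emptyset$ for $v\in V_f^{++}$; hence no leaf lies on the boundary of a strength-$\geq 2$ broadcast, which constrains which spine neighbours can be branch vertices. Second, when $f(p)=2$ and $b$ is an end branch vertex with one pendant (so $b\notin R_C$), the interior is only $\{p,b\}$ and you are forced to charge a \emph{boundary} trunk of $p$'s broadcast; ``interiors of distinct broadcasts are disjoint'' gives no protection there, and a separate argument is needed to show that boundary vertex is not also charged by $l_1$. (Minor point: the fact $PB_f(v)=\emptyset$ for $v\in V_f^{++}$ comes from the proof of the $n-\delta(G)$ bound, Theorem 3.3, not from Theorem \ref{theorem:2alpha}.)

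The paper sidesteps all of this with a structural reduction your plan omits and which I would recommend proving first: for an $\alpha_{bn}$-broadcast maximizing $|V_f^1|$, every inner leaf broadcasts at strength at most $2$. This is obtained by explicitly rebalancing a strength-$k$ inner-leaf broadcast with $k\geq 3$ into strength-$1$ broadcasts at alternate vertices of its neighbourhood (odd $k$), or a strength-$2$ broadcast plus strength-$1$ broadcasts (even $k$), preserving cost while increasing $|V_f^1|$. With that in hand, the strength-$2$ inner leaves sit over an independent subset of $R_C$, the remaining inner leaves contribute at most $|L_C|-2$, and the trunk-and-endpoint contribution is bounded by $|V(C')|-1$ by restricting $f$ to the subgraph $C'$ obtained by deleting inner leaves and branch vertices and applying $\alpha_{bn}(C')\leq |V(C')|-\delta(C')$. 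If you want to keep the charging route, the strength-$\leq 2$ reduction would shrink your case analysis to the point of being manageable; without it, the hard cases are exactly the ones your sketch defers.
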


\begin{proof}

Let $f$ be an $\alpha_{bn}$-broadcast on $C$ such that $|V_f^1|$ is maximized, and let $l_i$ be an inner leaf of $C$ adjacent to a branch vertex $b_i$. 
If $f(l_i)\geq 3$, then since $l_1$ and $l_m$ are broadcasting, $l_i$ covers at least $2(f(l_i)-1)$ edges on the spine. 

If $f(l_i)$ is odd, let $f'$ be a broadcast defined by $f'(l_i)=1$, $f'(v)=1$ for all $v$ at an even distance from $l_i$, and $f'(v)=f(v)$ for all $v\notin N_f[l_i]$. 

If $f(l_i)$ is even, define $f'$ by $f'(l_i)=2$, $f'(v)=1$ for all $v$ at odd distance at least 3 from $l_i$, and $f'(v)=f(v)$ for all $v\notin N_f[l_i]$.

Figure \ref{fig:even_odd} illustrates the construction of $f'$.

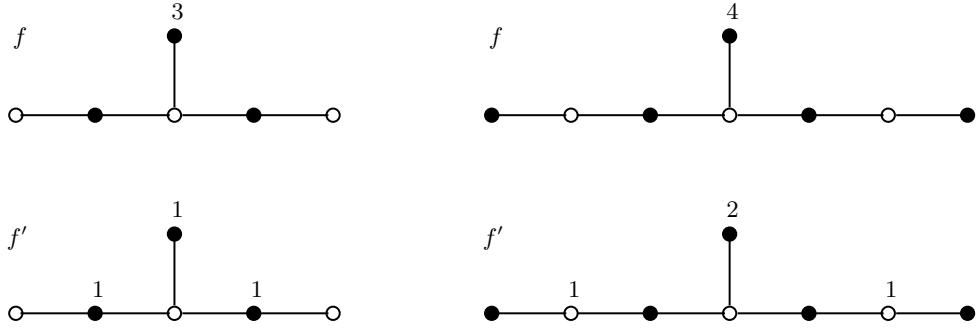
\begin{figure}[H]
    \centering

\tikzset{every picture/.style={line width=0.75pt}} 
\begin{tikzpicture}[x=0.75pt,y=0.75pt,yscale=-1,xscale=1]

\draw    (280,130) -- (317.65,130) ;
\draw [shift={(320,130)}, rotate = 0] [color={rgb, 255:red, 0; green, 0; blue, 0 }  ][line width=0.75]      (0, 0) circle [x radius= 3.35, y radius= 3.35]   ;
\draw [shift={(280,130)}, rotate = 0] [color={rgb, 255:red, 0; green, 0; blue, 0 }  ][fill={rgb, 255:red, 0; green, 0; blue, 0 }  ][line width=0.75]      (0, 0) circle [x radius= 3.35, y radius= 3.35]   ;
\draw    (360,130) -- (397.65,130) ;
\draw [shift={(400,130)}, rotate = 0] [color={rgb, 255:red, 0; green, 0; blue, 0 }  ][line width=0.75]      (0, 0) circle [x radius= 3.35, y radius= 3.35]   ;
\draw [shift={(360,130)}, rotate = 0] [color={rgb, 255:red, 0; green, 0; blue, 0 }  ][fill={rgb, 255:red, 0; green, 0; blue, 0 }  ][line width=0.75]      (0, 0) circle [x radius= 3.35, y radius= 3.35]   ;
\draw    (440,130) -- (477.65,130) ;
\draw [shift={(480,130)}, rotate = 0] [color={rgb, 255:red, 0; green, 0; blue, 0 }  ][line width=0.75]      (0, 0) circle [x radius= 3.35, y radius= 3.35]   ;
\draw [shift={(440,130)}, rotate = 0] [color={rgb, 255:red, 0; green, 0; blue, 0 }  ][fill={rgb, 255:red, 0; green, 0; blue, 0 }  ][line width=0.75]      (0, 0) circle [x radius= 3.35, y radius= 3.35]   ;
\draw    (323,130) -- (359,130) ;
\draw    (404,130) -- (440,130) ;
\draw    (484,130) -- (520,130) ;
\draw [shift={(520,130)}, rotate = 0] [color={rgb, 255:red, 0; green, 0; blue, 0 }  ][fill={rgb, 255:red, 0; green, 0; blue, 0 }  ][line width=0.75]      (0, 0) circle [x radius= 3.35, y radius= 3.35]   ;
\draw    (400,126) -- (400,90) ;
\draw [shift={(400,90)}, rotate = 270] [color={rgb, 255:red, 0; green, 0; blue, 0 }  ][fill={rgb, 255:red, 0; green, 0; blue, 0 }  ][line width=0.75]      (0, 0) circle [x radius= 3.35, y radius= 3.35]   ;
\draw    (280,230) -- (317.65,230) ;
\draw [shift={(320,230)}, rotate = 0] [color={rgb, 255:red, 0; green, 0; blue, 0 }  ][line width=0.75]      (0, 0) circle [x radius= 3.35, y radius= 3.35]   ;
\draw [shift={(280,230)}, rotate = 0] [color={rgb, 255:red, 0; green, 0; blue, 0 }  ][fill={rgb, 255:red, 0; green, 0; blue, 0 }  ][line width=0.75]      (0, 0) circle [x radius= 3.35, y radius= 3.35]   ;
\draw    (360,230) -- (397.65,230) ;
\draw [shift={(400,230)}, rotate = 0] [color={rgb, 255:red, 0; green, 0; blue, 0 }  ][line width=0.75]      (0, 0) circle [x radius= 3.35, y radius= 3.35]   ;
\draw [shift={(360,230)}, rotate = 0] [color={rgb, 255:red, 0; green, 0; blue, 0 }  ][fill={rgb, 255:red, 0; green, 0; blue, 0 }  ][line width=0.75]      (0, 0) circle [x radius= 3.35, y radius= 3.35]   ;
\draw    (440,230) -- (477.65,230) ;
\draw [shift={(480,230)}, rotate = 0] [color={rgb, 255:red, 0; green, 0; blue, 0 }  ][line width=0.75]      (0, 0) circle [x radius= 3.35, y radius= 3.35]   ;
\draw [shift={(440,230)}, rotate = 0] [color={rgb, 255:red, 0; green, 0; blue, 0 }  ][fill={rgb, 255:red, 0; green, 0; blue, 0 }  ][line width=0.75]      (0, 0) circle [x radius= 3.35, y radius= 3.35]   ;
\draw    (323,230) -- (359,230) ;
\draw    (404,230) -- (440,230) ;
\draw    (484,230) -- (520,230) ;
\draw [shift={(520,230)}, rotate = 0] [color={rgb, 255:red, 0; green, 0; blue, 0 }  ][fill={rgb, 255:red, 0; green, 0; blue, 0 }  ][line width=0.75]      (0, 0) circle [x radius= 3.35, y radius= 3.35]   ;
\draw    (400,226) -- (400,190) ;
\draw [shift={(400,190)}, rotate = 270] [color={rgb, 255:red, 0; green, 0; blue, 0 }  ][fill={rgb, 255:red, 0; green, 0; blue, 0 }  ][line width=0.75]      (0, 0) circle [x radius= 3.35, y radius= 3.35]   ;
\draw    (80,130) -- (117.65,130) ;
\draw [shift={(120,130)}, rotate = 0] [color={rgb, 255:red, 0; green, 0; blue, 0 }  ][line width=0.75]      (0, 0) circle [x radius= 3.35, y radius= 3.35]   ;
\draw [shift={(80,130)}, rotate = 0] [color={rgb, 255:red, 0; green, 0; blue, 0 }  ][fill={rgb, 255:red, 0; green, 0; blue, 0 }  ][line width=0.75]      (0, 0) circle [x radius= 3.35, y radius= 3.35]   ;
\draw    (160,130) -- (197.65,130) ;
\draw [shift={(200,130)}, rotate = 0] [color={rgb, 255:red, 0; green, 0; blue, 0 }  ][line width=0.75]      (0, 0) circle [x radius= 3.35, y radius= 3.35]   ;
\draw [shift={(160,130)}, rotate = 0] [color={rgb, 255:red, 0; green, 0; blue, 0 }  ][fill={rgb, 255:red, 0; green, 0; blue, 0 }  ][line width=0.75]      (0, 0) circle [x radius= 3.35, y radius= 3.35]   ;
\draw    (42.35,130) -- (79,130) ;
\draw [shift={(40,130)}, rotate = 0] [color={rgb, 255:red, 0; green, 0; blue, 0 }  ][line width=0.75]      (0, 0) circle [x radius= 3.35, y radius= 3.35]   ;
\draw    (124,130) -- (160,130) ;
\draw    (120,126) -- (120,90) ;
\draw [shift={(120,90)}, rotate = 270] [color={rgb, 255:red, 0; green, 0; blue, 0 }  ][fill={rgb, 255:red, 0; green, 0; blue, 0 }  ][line width=0.75]      (0, 0) circle [x radius= 3.35, y radius= 3.35]   ;
\draw    (80,230) -- (117.65,230) ;
\draw [shift={(120,230)}, rotate = 0] [color={rgb, 255:red, 0; green, 0; blue, 0 }  ][line width=0.75]      (0, 0) circle [x radius= 3.35, y radius= 3.35]   ;
\draw [shift={(80,230)}, rotate = 0] [color={rgb, 255:red, 0; green, 0; blue, 0 }  ][fill={rgb, 255:red, 0; green, 0; blue, 0 }  ][line width=0.75]      (0, 0) circle [x radius= 3.35, y radius= 3.35]   ;
\draw    (160,230) -- (197.65,230) ;
\draw [shift={(200,230)}, rotate = 0] [color={rgb, 255:red, 0; green, 0; blue, 0 }  ][line width=0.75]      (0, 0) circle [x radius= 3.35, y radius= 3.35]   ;
\draw [shift={(160,230)}, rotate = 0] [color={rgb, 255:red, 0; green, 0; blue, 0 }  ][fill={rgb, 255:red, 0; green, 0; blue, 0 }  ][line width=0.75]      (0, 0) circle [x radius= 3.35, y radius= 3.35]   ;
\draw    (42.35,230) -- (79,230) ;
\draw [shift={(40,230)}, rotate = 0] [color={rgb, 255:red, 0; green, 0; blue, 0 }  ][line width=0.75]      (0, 0) circle [x radius= 3.35, y radius= 3.35]   ;
\draw    (124,230) -- (160,230) ;
\draw    (120,226) -- (120,190) ;
\draw [shift={(120,190)}, rotate = 270] [color={rgb, 255:red, 0; green, 0; blue, 0 }  ][fill={rgb, 255:red, 0; green, 0; blue, 0 }  ][line width=0.75]      (0, 0) circle [x radius= 3.35, y radius= 3.35]   ;

\draw (397,72.4) node [anchor=north west][inner sep=0.75pt]  [font=\small]  {$4$};
\draw (117,72.4) node [anchor=north west][inner sep=0.75pt]  [font=\small]  {$3$};
\draw (117,172.4) node [anchor=north west][inner sep=0.75pt]  [font=\small]  {$1$};
\draw (317,212.4) node [anchor=north west][inner sep=0.75pt]  [font=\small]  {$1$};
\draw (157,212.4) node [anchor=north west][inner sep=0.75pt]  [font=\small]  {$1$};
\draw (77,212.4) node [anchor=north west][inner sep=0.75pt]  [font=\small]  {$1$};
\draw (477,212.4) node [anchor=north west][inner sep=0.75pt]  [font=\small]  {$1$};
\draw (397,172.4) node [anchor=north west][inner sep=0.75pt]  [font=\small]  {$2$};
\draw (37,84.4) node [anchor=north west][inner sep=0.75pt]  [font=\small]  {$f$};
\draw (277,84.4) node [anchor=north west][inner sep=0.75pt]  [font=\small]  {$f$};
\draw (34,184.4) node [anchor=north west][inner sep=0.75pt]  [font=\small]  {$f'$};
\draw (274,184.4) node [anchor=north west][inner sep=0.75pt]  [font=\small]  {$f'$};

\end{tikzpicture}

    \caption{The odd and even cases. }
    \label{fig:even_odd}
\end{figure}

It follows that $f'$ is a boundary independent broadcast on $C$ such that $\sigma(f')= \sigma(f)$ and $|V_{f'}^1|>|V_f^1|$, a contradiction. Thus, $f(l_i)\leq 2$ for each inner leaf $l_i$. Since $f$ is boundary independent, the set of branch vertices adjacent to leaves that broadcast at strength 2 must form an independent set. 

By Lemma 8.1, no branch vertices broadcast under $f$. Let $C'$ denote the subgraph of $C$ induced by removing all inner leaves and branch vertices, and let $f_{C'}$ denote the restriction of $f$ to $C'$. 
Since $f_{C'}$ is boundary independent, $\sigma(f_{C'})\leq \alpha_{bn}(C')\leq |V(C')|-\delta(C')=|V(C')|-1$.
Taking the sum over all broadcasting vertices, we find that 
\\
${\alpha_{bn}(C)\leq (|V(C')|-1)+(L_C-2)+\alpha(C[R_C])\leq n-|B_C|+\alpha(C[R_C])}$. \end{proof}

\begin{corollary}
Let $C$ be a caterpillar with $|V(C)|\geq 3$.
\begin{itemize}
    \item[i.] If $\tau_C=\emptyset$, then $\alpha_{bn}(C)=|L_C|+\alpha(C[R_C])$.
    \item[ii.]  If $C$ has no two adjacent trunks and no vertices of degree 3, then $\alpha_{bn}(C)=|L_C|+|\tau_C|$.
\end{itemize}
\end{corollary}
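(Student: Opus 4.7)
The plan is to combine the upper bound from Theorem~\ref{theorem:caterpillars} with a matching construction for each part.

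For (i), since $\tau_C = \emptyset$ we have $n = |L_C| + |B_C|$, so Theorem~\ref{theorem:caterpillars} reads $\alpha_{bn}(C) \leq |L_C| + \alpha(C[R_C])$. For the lower bound, I would first note that in a trunk-free caterpillar a branch vertex lies in $R_C$ exactly when it is an interior spine vertex of degree~$3$, and such a vertex has a unique inner pendant leaf. Let $S$ be a maximum independent set in $C[R_C]$, and for each $b \in S$ denote its unique pendant leaf by $p(b)$. Define $f$ by $f(p(b)) = 2$ for $b \in S$, $f(l) = 1$ for every other leaf $l$, and $f = 0$ elsewhere; then $\sigma(f) = |L_C| + |S|$. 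Boundary independence reduces to verifying $d_C(u,v) \geq f(u) + f(v)$ for every pair of broadcasting leaves: two strength-$1$ leaves are at distance $\geq 2$; a strength-$2$ leaf $p(b)$ and a strength-$1$ leaf are pendant on distinct branch vertices and so at distance $\geq 3$; and two strength-$2$ leaves $p(b), p(b')$ satisfy $d_C(p(b), p(b')) = 2 + d_C(b, b') \geq 4$ by the independence of $S$ in $C[R_C]$.

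For (ii), the degree hypothesis forces every branch vertex of $C$ to have degree at least~$4$. A short case analysis over interior, first, and last branch vertices then shows that every branch vertex separates at least two leaves (pendant leaves of $b$, together with $l_1$ or $l_m$ when $b$ is the closest branch vertex to a spine endpoint and is not adjacent to it), so $R_C = \emptyset$. Theorem~\ref{theorem:caterpillars} then gives $\alpha_{bn}(C) \leq n - |B_C| = |L_C| + |\tau_C|$. For the matching construction I would start with $f(l) = f(t) = 1$ for every leaf $l$ and every trunk $t$. The only pairs at distance less than~$2$ occur when a spine endpoint is adjacent to a trunk; if $l_1$'s spine neighbour is a trunk $t$, I replace $f(l_1) = f(t) = 1$ by $f(l_1) = 2$, $f(t) = 0$, preserving the total cost. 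Since no two trunks are adjacent, the vertex at distance~$2$ from $l_1$ along the spine is forced to be a branch vertex and hence does not broadcast, so the new boundary created at that branch vertex is compatible with its strength-$1$ pendant leaves, which sit at distance exactly~$3$ from $l_1$. The analogous fix is applied at $l_m$ if needed.

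The main obstacle is the systematic verification of boundary independence in the lower-bound construction for (ii), in particular ensuring that the boosted strength-$2$ broadcast at a spine endpoint does not overlap with the pendant-leaf broadcasts of the adjacent branch vertex. A minor remark is that the formula in (ii) implicitly requires $|B_C| \geq 1$: the degenerate case $C = P_3$ satisfies both hypotheses of (ii) but gives $\alpha_{bn}(P_3) = 2 \neq |L_C| + |\tau_C| = 3$, and therefore lies outside the natural scope of the statement.
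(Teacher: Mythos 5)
Your proposal follows the same overall strategy as the paper's proof: the upper bound comes from Theorem~\ref{theorem:caterpillars} in both parts, and in part (i) your lower-bound construction (strength $2$ on the unique pendant leaf of each vertex of a maximum independent set of $C[R_C]$, strength $1$ on every other leaf) is exactly the paper's broadcast, with the same distance checks. The genuinely different — and better — content is in part (ii). The paper assigns strength $1$ to every leaf and every trunk and asserts that ``by definition, no two broadcasting vertices are adjacent,'' but that assertion fails in precisely the case you isolate: when a spine endpoint $l_1$ (or $l_m$) is adjacent to a trunk $t$, the pair $l_1,t$ has $f(l_1)+f(t)=2>1=d_C(l_1,t)$, so the paper's broadcast is not boundary independent there. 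Your repair ($f(l_1)=2$, $f(t)=0$, cost preserved), together with the observation that the hypothesis of no two adjacent trunks forces the vertex at distance $2$ from $l_1$ to be a non-broadcasting branch vertex whose pendant leaves sit exactly on the boundary of the boosted broadcast, is exactly what is needed to make the lower bound go through. Your closing remark is also a genuine correction to the statement itself: $P_3$ satisfies every stated hypothesis of (ii) yet has $\alpha_{bn}(P_3)=2<|L_C|+|\tau_C|=3$, so (ii) implicitly requires $B_C\neq\emptyset$ (equivalently, $P_3$ must be excluded). In short, your argument is correct and repairs two defects in the paper's own treatment of part (ii).
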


\begin{proof}

Suppose $\tau_C=\emptyset$, so that every vertex of $C$ is either a branch vertex or a leaf. By Theorem  \ref{theorem:caterpillars}, $\alpha_{bn}(C)\leq n-|B_C|+\alpha(C[R_C]) = |L_C|+\alpha(C[R_C])$. Observe that $C[R_C]$ is a forest of paths, and let $S$ be a maximum independent set of $C[R_C]$. 
Define a broadcast $f$ on $C$ by 

\[
f(v)=\left\{
\begin{tabular}
[c]{rl}%
2 & if $v\in L_T$ and $v$ is adjacent to a vertex in $S$
\\
1 & if $v\in L_T$ and $v$ is not adjacent to a vertex in $S$
\\
0 & if $v\notin L_T$.
\end{tabular}
\right.
\]

As only leaves broadcast, two broadcasts from $l_i$ and $l_j$ may overlap only if one or both vertices broadcast at strength 2. 
Suppose they do. Without loss of generality, assume $f(l_i)=2$. Let $b_i$ be the neighbour of $l_i$ in $S$. But then since $b_i$ has only one leaf, $f(l_j)=2$ and $b_j\in S$, hence $d(l_i, l_j)\geq 4$. It follows that $f$ is a boundary independent broadcast on $C$. Thus, $\sigma(f)=|L_C|+\alpha(C[R_C])\leq \alpha_{bn}(C)$.

Suppose instead that $C$ has no two adjacent trunks and no vertices of degree 3. Since $R_C=\emptyset$, $\alpha_{bn}(C)\leq n-|B_C|+\alpha(C[R_C]) = |L_C|+\tau_C$ by Theorem  \ref{theorem:caterpillars}.
Define a broadcast $f$ on $C$ by 

\[
f(v)=\left\{
\begin{tabular}
[c]{rl}%
1 & if $v\in L_C$ or $v\in \tau_C$
\\
0 & otherwise.
\end{tabular}
\right.
\]

By definition, no two broadcasting vertices are adjacent. It follows that $f$ is a boundary independent broadcast, hence $\sigma(f)= |L_C|+|\tau_C|\leq \alpha_{bn}(C)$.
\end{proof}

\section{A polynomial time algorithm for trees}

In Section 7, we found that the problem of determining the cost of a bn-independent broadcast on a general graph $G$ is NP-complete.
However, the complexity of boundary independence in trees is unknown.

In \cite{bessy}, Bessy and Rautenbach proved that the hearing independence number $\alpha_h(T)$ can be determined for a tree $T$ in $O(n^9)$ time. In this section, we show that their algorithm can be modified to determine the boundary independence number $\alpha_{bn}(T)$ in $O(n^9)$ time.

A \textit{rooted tree} $(T, r)$ is a tree in which a distinguished vertex $r$ serves as a point of reference for all vertices of $T$. A vertex $v$ is said to be a \textit{descendant} of $u$ if $u$ lies along the unique $r-v$ path, in which case $u$ is an \textit{ancestor} of $v$. The descendants adjacent to $u$ are known as the \textit{children} of $u$. 

\subsection{Definitions and Notation}

Let $T$ be a tree of order $n\geq 3$ in which an arbitrary non-leaf vertex $r$ is chosen to be the root.
For each $u\in V(T)$, fix an arbitrary linear order on its children. If $v_1, ..., v_k$ are the children of $u$ in this linear order, define $T_{u, i}$ as the subtree of $T$ induced by $u$ and all vertices $w$ such that the unique $u-w$ paths contains one of $v_1, ..., v_i$ (see Figure \ref{fig:tui}).
In addition, define $T_{u, 0}$ as the subtree consisting only of the vertex $u$. If $\text{c}_T(v)$ denotes the number of children of $u$ in $T$ rooted at $r$, then, in total, there are at most
\begin{equation}
    \sum\limits_{u\in V(T)} (\text{c}_T(u)+1) 
    = deg(r)+1+\sum\limits_{u\in V(T)\backslash \{r\}}deg(u)
    = 2n-1
    =O(n)
\end{equation} such subtrees $T_{u, i}$. (Note that if $u$ is a leaf, then $c_T(u)=0$ and the only subtree counted is $T_{u, 0}=\{u\}$).

\begin{figure}[H]
    \centering

\tikzset{every picture/.style={line width=0.75pt}} 

\begin{tikzpicture}[x=0.75pt,y=0.75pt,yscale=-1,xscale=1]

\draw    (290,50) -- (240,90) -- (200,130) -- (190,170) ;
\draw [shift={(190,170)}, rotate = 104.04] [color={rgb, 255:red, 0; green, 0; blue, 0 }  ][fill={rgb, 255:red, 0; green, 0; blue, 0 }  ][line width=0.75]      (0, 0) circle [x radius= 3.35, y radius= 3.35]   ;
\draw [shift={(290,50)}, rotate = 141.34] [color={rgb, 255:red, 0; green, 0; blue, 0 }  ][fill={rgb, 255:red, 0; green, 0; blue, 0 }  ][line width=0.75]      (0, 0) circle [x radius= 3.35, y radius= 3.35]   ;
\draw    (290,50) -- (340,90) ;
\draw [shift={(340,90)}, rotate = 38.66] [color={rgb, 255:red, 0; green, 0; blue, 0 }  ][fill={rgb, 255:red, 0; green, 0; blue, 0 }  ][line width=0.75]      (0, 0) circle [x radius= 3.35, y radius= 3.35]   ;
\draw [shift={(290,50)}, rotate = 38.66] [color={rgb, 255:red, 0; green, 0; blue, 0 }  ][fill={rgb, 255:red, 0; green, 0; blue, 0 }  ][line width=0.75]      (0, 0) circle [x radius= 3.35, y radius= 3.35]   ;
\draw    (200,130) -- (210,170) ;
\draw [shift={(210,170)}, rotate = 75.96] [color={rgb, 255:red, 0; green, 0; blue, 0 }  ][fill={rgb, 255:red, 0; green, 0; blue, 0 }  ][line width=0.75]      (0, 0) circle [x radius= 3.35, y radius= 3.35]   ;
\draw [shift={(200,130)}, rotate = 75.96] [color={rgb, 255:red, 0; green, 0; blue, 0 }  ][fill={rgb, 255:red, 0; green, 0; blue, 0 }  ][line width=0.75]      (0, 0) circle [x radius= 3.35, y radius= 3.35]   ;
\draw    (240,90) -- (240,130) ;
\draw [shift={(240,130)}, rotate = 90] [color={rgb, 255:red, 0; green, 0; blue, 0 }  ][fill={rgb, 255:red, 0; green, 0; blue, 0 }  ][line width=0.75]      (0, 0) circle [x radius= 3.35, y radius= 3.35]   ;
\draw [shift={(240,90)}, rotate = 90] [color={rgb, 255:red, 0; green, 0; blue, 0 }  ][fill={rgb, 255:red, 0; green, 0; blue, 0 }  ][line width=0.75]      (0, 0) circle [x radius= 3.35, y radius= 3.35]   ;
\draw    (240,90) -- (280,130) ;
\draw [shift={(280,130)}, rotate = 45] [color={rgb, 255:red, 0; green, 0; blue, 0 }  ][fill={rgb, 255:red, 0; green, 0; blue, 0 }  ][line width=0.75]      (0, 0) circle [x radius= 3.35, y radius= 3.35]   ;
\draw    (270,210) -- (280,170) -- (290,210) ;
\draw [shift={(290,210)}, rotate = 75.96] [color={rgb, 255:red, 0; green, 0; blue, 0 }  ][fill={rgb, 255:red, 0; green, 0; blue, 0 }  ][line width=0.75]      (0, 0) circle [x radius= 3.35, y radius= 3.35]   ;
\draw [shift={(270,210)}, rotate = 284.04] [color={rgb, 255:red, 0; green, 0; blue, 0 }  ][fill={rgb, 255:red, 0; green, 0; blue, 0 }  ][line width=0.75]      (0, 0) circle [x radius= 3.35, y radius= 3.35]   ;
\draw    (280,130) -- (280,170) ;
\draw [shift={(280,170)}, rotate = 90] [color={rgb, 255:red, 0; green, 0; blue, 0 }  ][fill={rgb, 255:red, 0; green, 0; blue, 0 }  ][line width=0.75]      (0, 0) circle [x radius= 3.35, y radius= 3.35]   ;
\draw    (240,130) -- (240,170) ;
\draw [shift={(240,170)}, rotate = 90] [color={rgb, 255:red, 0; green, 0; blue, 0 }  ][fill={rgb, 255:red, 0; green, 0; blue, 0 }  ][line width=0.75]      (0, 0) circle [x radius= 3.35, y radius= 3.35]   ;
\draw    (340,130) -- (340,90) -- (380,130) ;
\draw [shift={(380,130)}, rotate = 45] [color={rgb, 255:red, 0; green, 0; blue, 0 }  ][fill={rgb, 255:red, 0; green, 0; blue, 0 }  ][line width=0.75]      (0, 0) circle [x radius= 3.35, y radius= 3.35]   ;
\draw [shift={(340,130)}, rotate = 270] [color={rgb, 255:red, 0; green, 0; blue, 0 }  ][fill={rgb, 255:red, 0; green, 0; blue, 0 }  ][line width=0.75]      (0, 0) circle [x radius= 3.35, y radius= 3.35]   ;
\draw    (360,170) -- (380,130) -- (400,170) ;
\draw [shift={(400,170)}, rotate = 63.43] [color={rgb, 255:red, 0; green, 0; blue, 0 }  ][fill={rgb, 255:red, 0; green, 0; blue, 0 }  ][line width=0.75]      (0, 0) circle [x radius= 3.35, y radius= 3.35]   ;
\draw [shift={(360,170)}, rotate = 296.57] [color={rgb, 255:red, 0; green, 0; blue, 0 }  ][fill={rgb, 255:red, 0; green, 0; blue, 0 }  ][line width=0.75]      (0, 0) circle [x radius= 3.35, y radius= 3.35]   ;
\draw    (380,130) -- (380,149.8) -- (380,170) ;
\draw [shift={(380,170)}, rotate = 90] [color={rgb, 255:red, 0; green, 0; blue, 0 }  ][fill={rgb, 255:red, 0; green, 0; blue, 0 }  ][line width=0.75]      (0, 0) circle [x radius= 3.35, y radius= 3.35]   ;
\draw  [dash pattern={on 0.84pt off 2.51pt}] (150,70) -- (150,230) -- (300,230) -- (300,70) -- cycle ;
\draw  [dash pattern={on 0.84pt off 2.51pt}] (170,78) -- (170,180) -- (260,180) -- (260,78) -- cycle ;

\draw (275,40.4) node [anchor=north west][inner sep=0.75pt]  [font=\footnotesize]  {${\textstyle r}$};
\draw (190,113.4) node [anchor=north west][inner sep=0.75pt]  [font=\footnotesize]  {$v_{1}$};
\draw (221,81.4) node [anchor=north west][inner sep=0.75pt]  [font=\footnotesize]  {$u$};
\draw (224,113.4) node [anchor=north west][inner sep=0.75pt]  [font=\footnotesize]  {$v_{2}$};
\draw (276,113.4) node [anchor=north west][inner sep=0.75pt]  [font=\footnotesize]  {$v_{3}$};
\draw (172,183.4) node [anchor=north west][inner sep=0.75pt]    {$T_{u, 2}$};
\draw (152,233.4) node [anchor=north west][inner sep=0.75pt]    {$T_{u, 3}$};

\end{tikzpicture}
    \caption{A tree $T$ rooted at $r$ and a vertex $u$ with three children.}
    \label{fig:tui}
\end{figure}
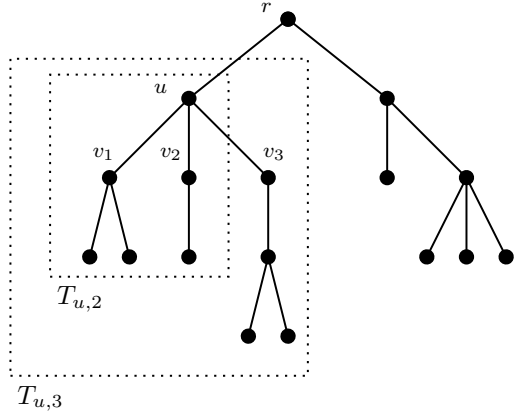

Bessy and Rautenbach's algorithm, based on dynamic programming, successively considers subtrees $T_{u, i}$ such that the vertices $u$ are ordered by nonincreasing distance from the root $r$.

Given a boundary independent broadcast $f$ on $T$, its restriction to $T_{u, i}$ satisfies the following conditions:
\begin{itemize}
    \item[(C1)]  $f(x)\leq e_T(x)$ for all $x\in V(T_{u, i})$ 
    \item[(C2)] $d_T(x, y)\geq f(x)+f(y)$ for every two distinct broadcasting vertices $x, y\in V(T_{u, i})$.
\end{itemize}

As in \cite{bessy}, we find that if $f(y)>0$ for some 
$y\in V(T\backslash T_{u, i})$, then $y$ imposes upper bounds
on $f(x)$ for all $x\in V(T_{u, i})$. Specifically, by C2, if $d_T(x, y)\leq f(y)$, then $x\in N_f(y)$ and hence $f(x)=0$. If $f(y)<d_T(x, y)$, then $f(x)\leq d_T(x, y)-f(y)$. 

We may express this upper bound as a function $g_{p, q}(d_T(u, x))$ as follows. First, consider a function $h_{t}:\mathbb{Z}\rightarrow \mathbb{N}_0$ such that $h_t(d)=\text{max}\{0, d-t\}$.

For a vertex $y\in V_f^+$, let $q=f(y)$, $p=f(y)-d_T(u, y)$, and define $g_{p, q}: \mathbb{Z} \rightarrow \mathbb{N}_0$ by $g_{p, q}(d)=\text{max}\{0, d-p\}$.
Note that $g_{p, q}$ is equivalent to $h_t$ under the restriction $t=f(y)-d_T(u, y)$, and so we have the bound $f(x)\leq g_{p, q}(d_T(u, x))$. Figure \ref{fig:f_visualization} below motivates the choice of $p$ and $q.$

\begin{figure}[H]
    \centering

\tikzset{every picture/.style={line width=0.75pt}} 

\begin{tikzpicture}[x=0.75pt,y=0.75pt,yscale=-1,xscale=1]

\draw    (190,63) -- (190,210) ;
\draw [shift={(190,60)}, rotate = 90] [fill={rgb, 255:red, 0; green, 0; blue, 0 }  ][line width=0.08]  [draw opacity=0] (8.93,-4.29) -- (0,0) -- (8.93,4.29) -- cycle    ;
\draw    (170,190) -- (407,190) ;
\draw [shift={(410,190)}, rotate = 180] [fill={rgb, 255:red, 0; green, 0; blue, 0 }  ][line width=0.08]  [draw opacity=0] (8.93,-4.29) -- (0,0) -- (8.93,4.29) -- cycle    ;
\draw [line width=1.5]    (190,190) -- (271,190) ;
\draw [line width=1.5]    (271,190) -- (390,70) ;
\draw    (183,150) -- (197,150) ;
\draw  [dash pattern={on 0.84pt off 2.51pt}]  (197,150) -- (310,150) -- (310,183) ;
\draw    (270,183) -- (270,197) ;
\draw    (310,183) -- (310,197) ;

\draw (416,182.4) node [anchor=north west][inner sep=0.75pt]    {$d$};
\draw (172,143.4) node [anchor=north west][inner sep=0.75pt]  [font=\footnotesize]  {$1$};
\draw (169,39.4) node [anchor=north west][inner sep=0.75pt]    {$g_{p, q}( d)$};
\draw (253,202.4) node [anchor=north west][inner sep=0.75pt]  [font=\footnotesize]  {$f( y)$};
\draw (291,202.4) node [anchor=north west][inner sep=0.75pt]  [font=\footnotesize]  {$f( y) +1$};

\end{tikzpicture}

    \caption{When $d=d_T(x, y)\geq f(y)+1$, the upper bound on $f(x)$ imposed by $g_{p, q}(d)$ increases linearly as a function of $d$.}
    \label{fig:f_visualization}
\end{figure}
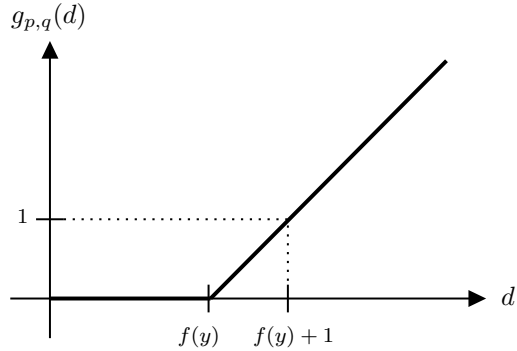
Define integers $p_{in},\ q_{in}$ and $y_{in}$ as follows. If $V_{f}^{+}\backslash V(T_{u, i})$ is empty, let $p_{in}=-n$ and $q_{in}=1$. Otherwise,
let $p_{in}=\max\{p=f(y)-d_T(u, y) :y\in V_{f}^{+}\backslash V(T_{u, i})\}$. Let
$y_{in}\in V_{f}^{+}\backslash V(T_{u, i})$ be a vertex for which this
maximum holds. If $y_{in}$ overdominates $u$, then, by bn-independence,
$y_{in}$ is the unique such vertex; otherwise, there may be more than one such
vertex and we choose $y_{in}$ arbitrarily. Let $q_{in}=f(y_{in})$. Note that

\begin{equation}
(p_{in},q_{in})=\left\{
\begin{tabular}
[c]{ll}%

$(-n,1)$ & $\text{ if } V_{f}^{+}\cap V(T\setminus T_{u, i})=\varnothing$\\
$(f(y_{in})-d_{T}(u,y_{in}),f(y_{in}))$ & $\text{ otherwise.}$
\end{tabular}
\right.  \label{eq_pin}
\end{equation}

Since $d_{T}(u,y_{in})\geq0$, it follows that $p_{in}\leq q_{in}$ in either case.

Among all broadcasting vertices in $T\backslash T_{u, i}$, $y_{in}$ imposes the strictest upper bound on $f(x)$ for all $x\in T_{u, i}$. That is, for all $x\in V(T_{u, i})$ and all $q=f(y)$, where $y\in V_{f}^{+}\backslash V(T_{u, i})$, we have that $f(x)\leq g_{p_{in}, q_{in}}(d_T(u, x))\leq g_{p, q}(d_T(u, x))$.

The existence of $p_{in}$ and $q_{in}$ implies that bounds on $f(x)$ for all $x\in V(T_{u, i})$ can be encoded with just these two values. Symmetrically,
the upper bounds on the possible
values of $f$ on the vertices of $T\backslash T_{u, i}$ that are imposed by broadcasting vertices in $T_{u, i}$,
again expressed as a function of the distance from $u$ in $T$, can be encoded with two integers
$p_{out}$ and $q_{out}$. That is, if
$V_{f}^{+}\cap V(T_{u, i})=\varnothing$, let $p_{out}=-n$ and $q_{out}=1$;
otherwise, let $p_{out}=\max\{p:y\in V_{f}^{+}\cap V(T_{u, i})\}$, and choose
$y_{out}$ similar to $y_{in}$. Then%
\begin{equation} 
(p_{out},q_{out})=\left\{
\begin{tabular}
[c]{ll}%
$(-n,1)$ & if $V_{f}^{+}\cap V(T_{u, i})=\varnothing$\\
$(f(y_{out})-d_{T}(u,y_{out}),f(y_{out}))$ & otherwise,
\end{tabular}
\right.  \label{eq_pout}
\end{equation}
and $p_{out}\leq q_{out}$ in either case.

For all $O(n^4)$ possible choices for $p_{in}, q_{in}, p_{out}, q_{out}$ with  $-n\leq p_{in}, p_{out} \leq n$ and $1\leq q_{in}, q_{out}\leq n$, the algorithm determines the maximum contribution $\sum\limits_{x\in V(T_{u, i})}f(x) $ satisfying:

(C3) $f(x)\leq g_{p_{in}, q_{in}}(d_T(u, x))$ for every vertex $x$ of $T_{u, i}$.

(C4) If $f(y)>0$ for some vertex $y$ of $T_{u, i}$, then 
    $ g_{p_{out}, q_{out}}(d)\leq g_{f(y)-d_T(u, y), f(y)}(d)$
    for every positive integer $d$.

\subsection{The Algorithm}

The following lemma shows we can check C4 in linear time for a given vertex $y$.

\begin{lemma}
\label{lemma:501}
If $t, f$ and \textup{dist} are integers such that $-n\leq t\leq n$ and $f,$ \textup{dist} $\in [n]$, then
    $h_{t}(d)\leq h_{f-\textup{dist}}(d)$ for every positive integer $d$
if and only if $\textup{dist}\geq \textup{max}\{f, f-t\}$.
\end{lemma}

\begin{proof}
Suppose $t\leq 0$. Since $d$ is positive, $d\geq t+1$, hence $h_t(d)=d-t$. Moreover,

\begin{align*}
    h_{f-\textup{dist}}(d)&=\begin{cases}
 &0 \text{ if } d\leq f-\textup{dist} \\
 &d-f+\textup{dist} \text{ if } d\geq f-\textup{dist}+1. 
\end{cases}
\end{align*}

\noindent Therefore, $h_t(d) \leq h_{f-\text{dist}}(d)$ if and only if 

(i) $d\geq f-\text{dist}+1$, and 

(ii) $d-t\leq d-f+\text{dist}$

\noindent for all positive integers $d$. With $d=1$, (i) becomes $\text{dist}\geq f$. Therefore, (i) and (ii) together are equivalent to $\text{dist}\geq \text{max}\{f, f-t\}$.

Suppose instead that $t\geq 1$. When $d\geq t+1$, $h_t(d)$ is positive, in which case we once again have that $d\geq f-\text{dist}+1$ and $d-t\leq d-f+\text{dist}$. Since we require that $h_t(d)\leq h_{f-\textup{dist}}(d)$ for every positive integer $d$, we again see that $\text{dist}\geq f$ and $\text{dist}\geq f-t$, and
the lemma follows. \end{proof}

We can thus check C4 using Lemma \ref{lemma:501} with $t=p_{out}$, $f=f(y)$ and dist$\,=d_T(u, y)$.

Given $u\in V(T)$ and $i\in [k]_0$, let $p_{in}$, $p_{out},$ $q_{in}$, $q_{out}$ be integers with  $-n\leq p_{in}, p_{out}\leq n$ and $1\leq q_{in}, q_{out}\leq n$. A function
\begin{center}
     $f:V(T_{u_i})\rightarrow \mathbb{N}_0$ is
(($p_{in}$,  $q_{in}$), ($p_{out}$, $q_{out}$))-\textit{compatible} 
\end{center}
if conditions C1, C2, C3, and C4 hold under $f$. Let $\alpha_{bn}(T_{u, i},(p_{in},q_{in}), (p_{out},  q_{out}))$ be the maximum weight of such a function. 

\begin{lemma}
\label{lemma:502}
For any non-leaf root $r$ of $T$, 
$\alpha_{bn}(T)=\alpha_{bn}(T_{r, c_T(r)}, (-n, 1), (n, 1))$. 
\end{lemma}

\begin{proof}

By definition, $T_{r, c_T(r)}=T$. Since $g_{-n, 1}(d)=d+n$ for all nonnegative integers $d$, $g_{-n, 1}(d)\geq n$. Therefore, since $f(x)\leq e_T(x)\leq n$ for all $x\in V(T)$, condition C3 holds by C1. 
Similarly, since $g_{n, 1}(d)=0$ for all nonnegative integers $d$, C4 holds by C1 and Lemma \ref{lemma:501} with $t=p_{out}$. \end{proof}

We now show how to determine $\alpha_{bn}(T_{u, i},(p_{in},q_{in}), (p_{out}, q_{out}))$ recursively for all $O(n^5)$ choices for $(u, i)$, $p_{in},q_{in}, p_{out},$ and $q_{out}$. Recall that $T_{u, 0}$ consists of only the single vertex $u$. 

The following lemma holds for all vertices $u$ of $T$, but is particularly
useful when $u$ is a leaf.

\begin{lemma}
\label{lemma:503}

For any vertex $u$ of $T$,

\begin{align*}
    \alpha_{bn}(T_{u, 0},(p_{in},q_{in}), (p_{out},  q_{out}))=\begin{cases}
 0 &\text{ if } q_{out}> p_{out}\\
 \min\{e_T(u), g_{p_{in}, q_{in}}(0), p_{out} \} &\text{ if } q_{out}\leq p_{out}.
\end{cases}
\end{align*}
\end{lemma}

\begin{proof}
First suppose $q_{out}>p_{out}$. By (\ref{eq_pout}), either $V_{f}^{+}\cap
V(T_{u,0})=\varnothing$ and so $f(u)=0$, or $y_{out}$ is a vertex of $T_{u,0}$
such that $d_{T}(u,y_{out})>0$, which is impossible because $V(T_{u,0}%
)=\{u\}$. Therefore $f(u)=0$ and $\alpha_{bn}(T_{u, 0},(p_{in},q_{in}), (p_{out},  q_{out}))=0$. 

Now suppose that $q_{out}\leq p_{out}$. Then (as mentioned above)
$q_{out}=p_{out}$ and, by (\ref{eq_pout}), $V_{f}^{+}\cap V(T_{u,i}%
)\neq\varnothing$ and $d_{T}(u,y_{out})=0$; that is, $u=y_{out}$ and
$f(u)=f(y_{out})=q_{out}>0$. By C1, $f(u)\leq e_{T}(u),$ and by
C3, $f(u)\leq g_{p_{in},q_{in}}(0)$. Hence $\alpha_{bn}(T_{u, 0},(p_{in},q_{in}), (p_{out},  q_{out}))= \min\{e_T(u), g_{p_{in}, q_{in}}(0), p_{out}\}$. 
\end{proof}

The following technical lemma describes the key recursive step of the algorithm. See Figure \ref{fig:rooted_tree}. 

\begin{lemma}
\label{lemma504recursive}
Let $u\in V(T)$ and $i\in [k]$ be given and let $v_1, ..., v_k$ denote the children of $u$. Suppose $v_i$ has $k_i$ children.  A function $f:V(T_{u, i})\rightarrow \mathbb{N}_0$ is $((p_{in}, q_{in}),(p_{out}, q_{out}))$-compatible if and only if there exist integers 
$p_{\text {in }}^{(0)}, p_{\mathrm{out}}^{(0)}, p_{\text {in }}^{(1)}, p_{\mathrm{out}}^{(1)}, q_{\text {in }}^{(0)}, q_{\mathrm{out}}^{(0)}, q_{\text {in }}^{(1)}, q_{\mathrm{out}}^{(1)}$ with $-n~\leq p_{\mathrm{in}}^{(0)}, p_{\mathrm{out}}^{(0)}, p_{\mathrm{in}}^{(1)}, p_{\mathrm{out}}^{(1)} \leq n$ and $1 \leq q_{\mathrm{in}}^{(0)}, q_{\mathrm{out}}^{(0)}, q_{\mathrm{in}}^{(1)}, q_{\mathrm{out}}^{(1)} \leq n$ satisfying the following conditions: 

(i) the restriction of $f$ to $V(T_{u, i-1})$ is $((p_{ {in }}^{(0)}, q_{ {in }}^{(0)}),(p_{\mathrm{out}}^{(0)}, q_{\mathrm{out}}^{(0)}))$-compatible;

(ii) the restriction of $f$ to $V(T_{v_{i}, k_{i}})$ is $((p_{ {in }}^{(1)}, q_{ {in }}^{(1)}),(p_{ {out }}^{(1)}, q_{ {out }}^{(1)}))$-compatible;

\noindent and for every nonnegative integer $d$: 

(iii) $g_{p_{ {in }}^{{(0) }}, q_{ {in }}^{ (0)}}(d)=\min \{g_{p_{ {in }}, q_{ {in }}}(d),\, g_{p_{out}^{(1)}, q_{out}^{(1)}}(d+1)\}$;

(iv) $g_{p_{ {in }}^{(1)}, q_{ {in }}^{(1)}}(d)=\min \{g_{p_{ {in }}, q_{ {in }}}(d+1),\, g_{p_{ {out }}^{(0)}, q_{ {out }}^{(0)}}(d+1)\}$;

(v) $g_{p_{ {out }}, q_{ {out }}}(d) \leq \min \{g_{p_{ {out }}^{(0)}, q_{ {out }}^{(0)}}(d),\, g_{p_{ {out }}^{(1)}, q_{ {out }}^{(1)}}(d+1)\}$.

\end{lemma}

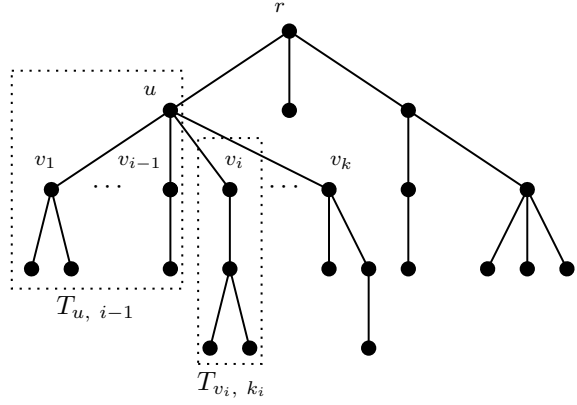
\begin{figure}[H]
\centering
\tikzset{every picture/.style={line width=0.75pt}} 
\begin{tikzpicture}[x=0.75pt,y=0.75pt,yscale=-1,xscale=1]

\draw    (380,150) -- (380,110) -- (320,70) -- (260,110) -- (260,150) ;
\draw [shift={(260,150)}, rotate = 90] [color={rgb, 255:red, 0; green, 0; blue, 0 }  ][fill={rgb, 255:red, 0; green, 0; blue, 0 }  ][line width=0.75]      (0, 0) circle [x radius= 3.35, y radius= 3.35]   ;
\draw [shift={(380,150)}, rotate = 270] [color={rgb, 255:red, 0; green, 0; blue, 0 }  ][fill={rgb, 255:red, 0; green, 0; blue, 0 }  ][line width=0.75]      (0, 0) circle [x radius= 3.35, y radius= 3.35]   ;
\draw    (380,110) -- (440,150) ;
\draw [shift={(440,150)}, rotate = 33.69] [color={rgb, 255:red, 0; green, 0; blue, 0 }  ][fill={rgb, 255:red, 0; green, 0; blue, 0 }  ][line width=0.75]      (0, 0) circle [x radius= 3.35, y radius= 3.35]   ;
\draw [shift={(380,110)}, rotate = 33.69] [color={rgb, 255:red, 0; green, 0; blue, 0 }  ][fill={rgb, 255:red, 0; green, 0; blue, 0 }  ][line width=0.75]      (0, 0) circle [x radius= 3.35, y radius= 3.35]   ;
\draw    (200,150) -- (260,110) ;
\draw [shift={(260,110)}, rotate = 0] [color={rgb, 255:red, 0; green, 0; blue, 0 }  ][fill={rgb, 255:red, 0; green, 0; blue, 0 }  ][line width=0.75]      (0, 0) circle [x radius= 3.35, y radius= 3.35]   ;
\draw [shift={(200,150)}, rotate = 326.31] [color={rgb, 255:red, 0; green, 0; blue, 0 }  ][fill={rgb, 255:red, 0; green, 0; blue, 0 }  ][line width=0.75]      (0, 0) circle [x radius= 3.35, y radius= 3.35]   ;
\draw    (260,150) -- (260,190) ;
\draw [shift={(260,190)}, rotate = 90] [color={rgb, 255:red, 0; green, 0; blue, 0 }  ][fill={rgb, 255:red, 0; green, 0; blue, 0 }  ][line width=0.75]      (0, 0) circle [x radius= 3.35, y radius= 3.35]   ;
\draw [shift={(260,150)}, rotate = 90] [color={rgb, 255:red, 0; green, 0; blue, 0 }  ][fill={rgb, 255:red, 0; green, 0; blue, 0 }  ][line width=0.75]      (0, 0) circle [x radius= 3.35, y radius= 3.35]   ;
\draw    (260,110) -- (290,150) ;
\draw [shift={(290,150)}, rotate = 53.13] [color={rgb, 255:red, 0; green, 0; blue, 0 }  ][fill={rgb, 255:red, 0; green, 0; blue, 0 }  ][line width=0.75]      (0, 0) circle [x radius= 3.35, y radius= 3.35]   ;
\draw    (190,190) -- (200,150) -- (204.42,167.69) -- (210,190) ;
\draw [shift={(210,190)}, rotate = 75.96] [color={rgb, 255:red, 0; green, 0; blue, 0 }  ][fill={rgb, 255:red, 0; green, 0; blue, 0 }  ][line width=0.75]      (0, 0) circle [x radius= 3.35, y radius= 3.35]   ;
\draw [shift={(190,190)}, rotate = 284.04] [color={rgb, 255:red, 0; green, 0; blue, 0 }  ][fill={rgb, 255:red, 0; green, 0; blue, 0 }  ][line width=0.75]      (0, 0) circle [x radius= 3.35, y radius= 3.35]   ;
\draw    (300,230) -- (290,190) -- (280,230) ;
\draw [shift={(280,230)}, rotate = 104.04] [color={rgb, 255:red, 0; green, 0; blue, 0 }  ][fill={rgb, 255:red, 0; green, 0; blue, 0 }  ][line width=0.75]      (0, 0) circle [x radius= 3.35, y radius= 3.35]   ;
\draw [shift={(300,230)}, rotate = 255.96] [color={rgb, 255:red, 0; green, 0; blue, 0 }  ][fill={rgb, 255:red, 0; green, 0; blue, 0 }  ][line width=0.75]      (0, 0) circle [x radius= 3.35, y radius= 3.35]   ;
\draw    (420,190) -- (440,150) -- (460,190) ;
\draw [shift={(460,190)}, rotate = 63.43] [color={rgb, 255:red, 0; green, 0; blue, 0 }  ][fill={rgb, 255:red, 0; green, 0; blue, 0 }  ][line width=0.75]      (0, 0) circle [x radius= 3.35, y radius= 3.35]   ;
\draw [shift={(420,190)}, rotate = 296.57] [color={rgb, 255:red, 0; green, 0; blue, 0 }  ][fill={rgb, 255:red, 0; green, 0; blue, 0 }  ][line width=0.75]      (0, 0) circle [x radius= 3.35, y radius= 3.35]   ;
\draw    (380,150) -- (380,170) -- (380,177.8) -- (380,190) ;
\draw [shift={(380,190)}, rotate = 90] [color={rgb, 255:red, 0; green, 0; blue, 0 }  ][fill={rgb, 255:red, 0; green, 0; blue, 0 }  ][line width=0.75]      (0, 0) circle [x radius= 3.35, y radius= 3.35]   ;
\draw    (290,150) -- (290,190) ;
\draw [shift={(290,190)}, rotate = 90] [color={rgb, 255:red, 0; green, 0; blue, 0 }  ][fill={rgb, 255:red, 0; green, 0; blue, 0 }  ][line width=0.75]      (0, 0) circle [x radius= 3.35, y radius= 3.35]   ;
\draw  [dash pattern={on 0.84pt off 2.51pt}] (180,90) -- (266,90) -- (266,200) -- (180,200) -- (180,90) -- cycle ;
\draw  [dash pattern={on 0.84pt off 2.51pt}] (306,124) -- (306,238) -- (274,238) -- (274,124) -- cycle ;
\draw    (260,110) -- (340,150) ;
\draw [shift={(340,150)}, rotate = 26.57] [color={rgb, 255:red, 0; green, 0; blue, 0 }  ][fill={rgb, 255:red, 0; green, 0; blue, 0 }  ][line width=0.75]      (0, 0) circle [x radius= 3.35, y radius= 3.35]   ;
\draw    (320,70) ;
\draw [shift={(320,70)}, rotate = 0] [color={rgb, 255:red, 0; green, 0; blue, 0 }  ][fill={rgb, 255:red, 0; green, 0; blue, 0 }  ][line width=0.75]      (0, 0) circle [x radius= 3.35, y radius= 3.35]   ;
\draw    (340,190) -- (340,150) -- (360,190) ;
\draw [shift={(360,190)}, rotate = 63.43] [color={rgb, 255:red, 0; green, 0; blue, 0 }  ][fill={rgb, 255:red, 0; green, 0; blue, 0 }  ][line width=0.75]      (0, 0) circle [x radius= 3.35, y radius= 3.35]   ;
\draw [shift={(340,190)}, rotate = 270] [color={rgb, 255:red, 0; green, 0; blue, 0 }  ][fill={rgb, 255:red, 0; green, 0; blue, 0 }  ][line width=0.75]      (0, 0) circle [x radius= 3.35, y radius= 3.35]   ;
\draw    (320,70) -- (320,110) ;
\draw [shift={(320,110)}, rotate = 90] [color={rgb, 255:red, 0; green, 0; blue, 0 }  ][fill={rgb, 255:red, 0; green, 0; blue, 0 }  ][line width=0.75]      (0, 0) circle [x radius= 3.35, y radius= 3.35]   ;
\draw    (440,150) -- (440,190) ;
\draw [shift={(440,190)}, rotate = 90] [color={rgb, 255:red, 0; green, 0; blue, 0 }  ][fill={rgb, 255:red, 0; green, 0; blue, 0 }  ][line width=0.75]      (0, 0) circle [x radius= 3.35, y radius= 3.35]   ;
\draw    (360,190) -- (360,230) ;
\draw [shift={(360,230)}, rotate = 90] [color={rgb, 255:red, 0; green, 0; blue, 0 }  ][fill={rgb, 255:red, 0; green, 0; blue, 0 }  ][line width=0.75]      (0, 0) circle [x radius= 3.35, y radius= 3.35]   ;

\draw (245,96.4) node [anchor=north west][inner sep=0.75pt]  [font=\footnotesize]  {${\textstyle u}$};
\draw (232,131.4) node [anchor=north west][inner sep=0.75pt]  [font=\footnotesize]  {$v_{i-1}$};
\draw (190,131.4) node [anchor=north west][inner sep=0.75pt]  [font=\footnotesize]  {$v_{1}$};
\draw (286,131.4) node [anchor=north west][inner sep=0.75pt]  [font=\footnotesize]  {$v_{i}$};
\draw (311,54.4) node [anchor=north west][inner sep=0.75pt]  [font=\footnotesize]  {$r$};
\draw (201,202.4) node [anchor=north west][inner sep=0.75pt]    {$T_{u,\ i-1}{}$};
\draw (272,240.4) node [anchor=north west][inner sep=0.75pt]    {$T_{v_{i} ,\ k_{i}}{}$};
\draw (339,131.4) node [anchor=north west][inner sep=0.75pt]  [font=\footnotesize]  {$v_{k}$};
\draw (219,144.4) node [anchor=north west][inner sep=0.75pt]    {$\cdots $};
\draw (308,144.4) node [anchor=north west][inner sep=0.75pt]    {$\cdots $};

\end{tikzpicture}

\caption{A rooted tree and a vertex $u$ with $k$ children, with $T_{u, i-1}$ and $T_{v_i, k_i}$ as shown.}
\label{fig:rooted_tree}
\end{figure}

\begin{proof}
Suppose first that $f$ is $((p_{in}, q_{in}),(p_{out}, q_{out}))$-compatible. Define $p_{in}^{(0)}, q_{in}^{(0)},p_{out}^{(0)}, q_{out}^{(0)}$ with respect to $T_{u, i-1}$ as in (2) and (3). Define
$p_{in}^{(1)}, q_{in}^{(1)},p_{out}^{(1)}, q_{out}^{(1)}$ with respect to $T_{v_i, k_i}$ similarly. 
Given a vertex  $w\in V(T\backslash T_{u, i})$, the distance from $w$ to $v_i$ is one more than the distance from $w$ to $u$. Thus, $(v)$ holds by C4. 

Let $y_{in}^{(0)}$ be an arbitrarily chosen vertex for which $f(y_{in}^{(0)})=q_{in}^{(0)}$ and $f(y_{in})-d_T(u, y_{in})=p_{in}^{(0)} $. If $y_{in}^{(0)}$ lies in $V(T\backslash T_{u, i})$, the bounds imposed by $p_{in}$ on $f(x)$ for all $x\in V(T_{u, i})$ match those imposed by $p_{in}^{(0)}$ on the vertices of $T_{u, i-1}$, hence $(p_{in}^{(0)}, q_{in}^{(0)})=(p_{in}, q_{in})$. Otherwise, $y_{in}^{(0)}$ lies in $V(T_{v_i, k_i})$. Since $g_{p, q}(d+1)=g_{p-1, q}(d)$ when $q\geq 1$, $g_{p_{in}^{(0)}, q_{in}^{(0)}}(d) = g_{p_{in}^{(1)}, q_{in}^{(1)}}(d+1)$.
Therefore $g_{p_{ {in }}^{{(0) }}, q_{ {in }}^{ (0)}}(d)=\min \{g_{p_{ {in }}, q_{ {in }}}(d), g_{p_{ {out }}^{(1)}, q_{ {out }}^{(1)}}(d+1)\}$ for every nonnegative integer $d$. Similarly, the minimum in $(iv)$ equals $g_{p_{{in}}^{(1)}, q_{{in}}^{(1)}}(d)$. Therefore $(iii)$ and $(iv)$ hold.

Since $f$ is $((p_{ {in }}, q_{ {in }}),(p_{ {out }}, q_{ {out }}))$-compatible, $f(x)\leq  g_{p_{ {in }}, q_{ {in }}}({d}_{T}(u, x))$ for all ${x\in V(T_{u, i-1})}$. Furthermore, by C2, ${f(x)\leq d_T(x, v_i)-f(v_i)\leq  g_{p_{ {out }}^{(1)}, q_{ {out }}^{(1)}}({d}_{T}(v_{i}, x))=g_{p_{ {out }}^{(1)}, q_{ {out }}^{(1)}}({d}_{T}(u, x)+1)}$, and so $(i)$ holds for $g_{p_{\mathrm{in}}^{(0)}, q_{\mathrm{in}}^{(0)}}(d)$ as in $(iii)$. Similarly, $(ii)$ holds for $g_{p_{\mathrm{in}}^{(1)}, q_{\mathrm{in}}^{(1)}}(d)$ as in $(iv).$ 

Conversely, suppose there exist integers 
$$
p_{\text {in }}^{(0)}, p_{\mathrm{out}}^{(0)}, p_{\text {in }}^{(1)}, p_{\mathrm{out}}^{(1)}, q_{\text {in }}^{(0)}, q_{\mathrm{out}}^{(0)}, q_{\text {in }}^{(1)}, q_{\mathrm{out}}^{(1)}
$$
with $-n \leq p_{\mathrm{in}}^{(0)}, p_{\mathrm{out}}^{(0)}, p_{\mathrm{in}}^{(1)}, p_{\mathrm{out}}^{(1)} \leq n$ and $1 \leq q_{\mathrm{in}}^{(0)}, q_{\mathrm{out}}^{(0)}, q_{\mathrm{in}}^{(1)}, q_{\mathrm{out}}^{(1)} \leq n$ satisfying $(i)-(v)$. 

By $(i)$ and $(ii)$, $f$ satisfies C1; that is, $f(x)\leq e_T(x)$ for all $x\in V(T_{u, i-1})$ and $x\in V(T_{v_i, k_i})$. 

Conditions $(i)$ and $(ii)$ further imply that C2 holds for any $x, y\in V(T_{u, i-1})$ and for any $x, y\in V(T_{v_i, k_i})$. Thus, to prove $f$ satisfies C2, it remains to show that $f(x)+f(y)\leq d_T(x, y)$ for all $x\in V(T_{u, i-1})$ and $y\in V(T_{v_i, k_i})$ such that $f(x), f(y)>0$. 

By $(i)$, the restriction  of $f$ to $V(T_{u, i-1})$ satisfies C3; that is, $f(x)\leq g_{p_{in}^{(0)}, q_{in}^{(0)}}(d_T(u, x))$. By $(iii)$,
\begin{align*}
    f(x)&\leq g_{p_{out}^{(1)}, q_{out}^{(1)}}(d_T(u, x)+1).
\end{align*}
Thus, by $(ii)$ and C4, 
\begin{align*}
    f(x)&\leq g_{f(y)-d_T(u, y), f(y)}(d_T(u, x)+1)
    \\
    &=g_{f(y)-d_T(v_i, y), f(y)}(d_T(v_i, x))
    \\
    &\leq \text{min}\{0, d_T(x, y)-f(y)\}.
\end{align*}
Since $x$ is broadcasting, $f(x)>0$ and so $f(x)\leq d_T(x, y)-f(y)$, satisfying C2.

By $(iii)$, $g_{p_{ {in }}^{(0)}, q_{ {in }}^{(0)}}(d) \leq g_{p_{\mathrm{in}}, q_{\mathrm{in}}}(d)$, and by $(iv)$,  $g_{p_{\mathrm{in}}^{(1)}, q_{\mathrm{in}}^{(1)}}(d) \leq g_{p_{\mathrm{in}}, q_{\mathrm{in}}}(d+1)$ for every nonnegative integer $d$. Thus $f$ satisfies (C3). 

Finally, consider $y\in V(T_{u, i})$ such that $f(y)>0$. If $y\in V(T_{u, i-1})$, then by $(v)$,
$$
g_{p_{out}, q_{out}}(d) \leq  g_{p_{out}^{(0)}, q_{out}^{(0)}}(d)
$$
for all positive integers $d$. Again by $(i)$, we may apply C4 to the restriction of $f$ to $T_{u, i-1}$, obtaining
$$
g_{p_{out}, q_{out}}(d) \leq  g_{f(y)-d_T(u, y), f(y)}(d).
$$
Similarly, if $f(y)>0$ for some $y\in V(T_{v_i, k_i})$, then
\begin{align*}
    g_{p_{out}, q_{out}}(d) &\leq  g_{p_{out}^{(1)}, q_{out}^{(1)}}(d+1)
    \\
    & \leq  g_{f(y)-d_T(v_i, y), f(y)}(d+1).
    \\
    & =  g_{f(y)-d_T(u, y), f(y)}(d)
\end{align*}
for every positive integer $d$. Therefore $f$ satisfies $C1-C4$. 
\end{proof}

Observe that for any $i\in [k]$, $V(T_{u, i-1})\cup V(T_{v_i, k_i})=V(T_{u, i})$. The following is a consequence of Lemma \ref{lemma504recursive}. 

\begin{corollary}
\label{corollary:505}
Suppose $u\in V(T)$ has $k$ children. Let $i\in [k]$ be given and let $v_i$ have $k_i$ children. Then
\begin{equation}
\begin{aligned}
        &\alpha_{bn}(T_{u, i}, (p_{in}, q_{in}),(p_{out}, q_{out})) \\
    &=\max \{\alpha_{bn}(T_{u, i-1}, (p_{in}^{(0)}, q_{in}^{(0)}),(p_{out}^{(0)}, q_{out}^{(0)})) 
    + \alpha_{bn}(T_{v_i, k_i}, (p_{in}^{(1)}, q_{in}^{(1)}),(p_{out}^{(1)}, q_{out}^{(1)}))\}
\end{aligned}
\label{eq_sum_max}
\end{equation}
over all choices of $p_{\text {in }}^{(0)}, p_{\mathrm{out}}^{(0)}, p_{\text {in }}^{(1)}, p_{\mathrm{out}}^{(1)}, q_{\text {in }}^{(0)}, q_{\mathrm{out}}^{(0)}, q_{\text {in }}^{(1)}, q_{\mathrm{out}}^{(1)}$
with $-n \leq p_{\mathrm{in}}^{(0)}, p_{\mathrm{out}}^{(0)}, p_{\mathrm{in}}^{(1)}, p_{\mathrm{out}}^{(1)} \leq n$ and 
\\
$1 \leq q_{\mathrm{in}}^{(0)}, q_{\mathrm{out}}^{(0)}, q_{\mathrm{in}}^{(1)}, q_{\mathrm{out}}^{(1)} \leq n$ satisfying conditions $(iii), (iv),$ and $(v)$ of Lemma \ref{lemma504recursive}. 
\end{corollary}

\begin{sloppypar}
\begin{proof}
Let $p_{\text {in }}^{(0)}, p_{\mathrm{out}}^{(0)}, p_{\text {in }}^{(1)}, p_{\mathrm{out}}^{(1)}, q_{\text {in }}^{(0)}, q_{\mathrm{out}}^{(0)}, q_{\text {in }}^{(1)}, q_{\mathrm{out}}^{(1)}$ be integers satisfying the conditions of the lemma such that the maximum in (4) is attained. 
Let $f$ be a broadcast such that the restrictions of $f$ to $T_{u, i-1}$ and $T_{v_i, k_i}$ correspond to $\alpha_{bn}(T_{u, i-1}, (p_{in}^{(0)}, q_{in}^{(0)}),(p_{out}^{(0)}, q_{out}^{(0)})) $ and $\alpha_{bn}(T_{v_i, k_i}, (p_{in}^{(1)}, q_{in}^{(1)}),(p_{out}^{(1)}, q_{out}^{(1)}))$, respectively. By definition, the restrictions are 
$((p_{ {in }}^{(0)}, q_{ {in }}^{(0)}),(p_{\mathrm{out}}^{(0)}, q_{\mathrm{out}}^{(0)}))$-compatible and {$((p_{ {in }}^{(1)}, q_{ {in }}^{(1)}),(p_{ {out }}^{(1)}, q_{ {out }}^{(1)}))$-compatible}, hence conditions $(i)$ and $(ii)$ in the statement of Lemma \ref{lemma504recursive} are satisfied. Thus, by Lemma \ref{lemma504recursive}, the restriction of $f$ to $T_{u, i}$ is boundary independent.\end{proof} \end{sloppypar} 

We may now prove the main theorem of this section. 

\begin{theorem}
\label{theorem:on9}
Given a tree $T$ with $n \geq 3$ vertices, its boundary independent broadcast number $\alpha_{bn}(T)$ can be determined in $O(n^9)$ time.
\end{theorem}

\begin{proof} 
Select a non-leaf vertex $r$ as the root and process the vertices of $T$ in order of nonincreasing distance from $r$. Recall that by (1), there are $O(n)$ choices for $(u, i)$.
Suppose $u$ and $i$ are given. For each of the $O(n^4)$ choices for $p_{in}, q_{in}, p_{out},$ and $q_{out}$, 
the value $\alpha_{bn}(T_{u, i}, (p_{in}, q_{in}), (p_{out},q_{out}))$ can be determined in $O\left(n^{4}\right)$ time as follows.
If $i=0$, apply Lemma \ref{lemma:503} to find $\alpha_{bn}(T_{u, i}, (p_{in}, q_{in}), (p_{out},q_{out}))$ in linear time. 
Otherwise:
\begin{itemize}
    \item For each of the $O(n^4)$ possible choices for the four integers $p_{\text {out }}^{(0)}, p_{\text {out }}^{(1)}, q_{\text {out }}^{(0)}$, and $q_{\text {out }}^{(1)}$, check condition $(v)$ from Lemma \ref{lemma504recursive} in constant time.
    
    \item Given $p_{in}, q_{in}, p_{\text {out}}^{(0)}, p_{ {out }}^{(1)}, q_{\text {out}}^{(0)}$, and $q_{\text {out }}^{(1)},$ apply conditions $(iii)$ and $(iv)$ of Lemma \ref{lemma504recursive} to determine $p_{\text {in }}^{(0)}, p_{\text {in }}^{(1)}, q_{\text {in }}^{(0)}$, and $q_{\text {in }}^{(1)}$ in constant time. 
    
    \item Finally, add $\alpha_{bn}(T_{u, i-1},(p_{ {in }}^{(0)}, q_{ {in }}^{(0)}),(p_{ {out }}^{(0)}, q_{ {out }}^{(0)}))$ and $\alpha_{bn}(T_{v_{i}, k_{i}},(p_{ {in }}^{(1)}, q_{ {in }}^{(1)}),(p_{ {out }}^{(1)}, q_{ {out }}^{(1)}))$ and apply Corollary \ref{corollary:505}. 
\end{itemize}
Since there are $O(n)$ choices for $(u, i)$, it follows that $\alpha_{bn}(T_{r, c_T(r)}, (-n, 1),(n, 1))$ can be determined in $O(n^9)$ time. By Lemma \ref{lemma:502}, this value equals the boundary independent broadcast number $\alpha_{bn}(T)$.
\end{proof}

We illustrate the algorithm with a simple example. Consider the tree $T$ below, in which one of the two non-leaf vertices is arbitrarily defined to be the root. The remaining vertices are labelled $u_1$ through $u_4$ such that $d_T(u_i, r)\leq d_T(u_j, r)$ whenever $i>j$.

\begin{figure}[H]
    \centering

\tikzset{every picture/.style={line width=0.75pt}} 

\begin{tikzpicture}[x=0.75pt,y=0.75pt,yscale=-1,xscale=1]

\draw    (398,118) -- (418,78) ;
\draw [shift={(418,78)}, rotate = 296.57] [color={rgb, 255:red, 0; green, 0; blue, 0 }  ][fill={rgb, 255:red, 0; green, 0; blue, 0 }  ][line width=0.75]      (0, 0) circle [x radius= 3.35, y radius= 3.35]   ;
\draw [shift={(398,118)}, rotate = 296.57] [color={rgb, 255:red, 0; green, 0; blue, 0 }  ][fill={rgb, 255:red, 0; green, 0; blue, 0 }  ][line width=0.75]      (0, 0) circle [x radius= 3.35, y radius= 3.35]   ;
\draw    (438,118) -- (418,78) -- (378,118) ;
\draw [shift={(378,118)}, rotate = 135] [color={rgb, 255:red, 0; green, 0; blue, 0 }  ][fill={rgb, 255:red, 0; green, 0; blue, 0 }  ][line width=0.75]      (0, 0) circle [x radius= 3.35, y radius= 3.35]   ;
\draw [shift={(438,118)}, rotate = 243.43] [color={rgb, 255:red, 0; green, 0; blue, 0 }  ][fill={rgb, 255:red, 0; green, 0; blue, 0 }  ][line width=0.75]      (0, 0) circle [x radius= 3.35, y radius= 3.35]   ;
\draw    (438,118) -- (438,158) ;
\draw [shift={(438,158)}, rotate = 90] [color={rgb, 255:red, 0; green, 0; blue, 0 }  ][fill={rgb, 255:red, 0; green, 0; blue, 0 }  ][line width=0.75]      (0, 0) circle [x radius= 3.35, y radius= 3.35]   ;
\draw    (195,118) -- (235,118) ;
\draw [shift={(235,118)}, rotate = 0] [color={rgb, 255:red, 0; green, 0; blue, 0 }  ][fill={rgb, 255:red, 0; green, 0; blue, 0 }  ][line width=0.75]      (0, 0) circle [x radius= 3.35, y radius= 3.35]   ;
\draw [shift={(195,118)}, rotate = 0] [color={rgb, 255:red, 0; green, 0; blue, 0 }  ][fill={rgb, 255:red, 0; green, 0; blue, 0 }  ][line width=0.75]      (0, 0) circle [x radius= 3.35, y radius= 3.35]   ;
\draw    (275,118) -- (315,118) ;
\draw [shift={(315,118)}, rotate = 0] [color={rgb, 255:red, 0; green, 0; blue, 0 }  ][fill={rgb, 255:red, 0; green, 0; blue, 0 }  ][line width=0.75]      (0, 0) circle [x radius= 3.35, y radius= 3.35]   ;
\draw [shift={(275,118)}, rotate = 0] [color={rgb, 255:red, 0; green, 0; blue, 0 }  ][fill={rgb, 255:red, 0; green, 0; blue, 0 }  ][line width=0.75]      (0, 0) circle [x radius= 3.35, y radius= 3.35]   ;
\draw    (235,118) -- (275,118) ;
\draw    (235,78) -- (235,118) ;
\draw [shift={(235,118)}, rotate = 90] [color={rgb, 255:red, 0; green, 0; blue, 0 }  ][fill={rgb, 255:red, 0; green, 0; blue, 0 }  ][line width=0.75]      (0, 0) circle [x radius= 3.35, y radius= 3.35]   ;
\draw [shift={(235,78)}, rotate = 90] [color={rgb, 255:red, 0; green, 0; blue, 0 }  ][fill={rgb, 255:red, 0; green, 0; blue, 0 }  ][line width=0.75]      (0, 0) circle [x radius= 3.35, y radius= 3.35]   ;

\draw (404,64.4) node [anchor=north west][inner sep=0.75pt]  [font=\footnotesize]  {$r$};
\draw (431,164.4) node [anchor=north west][inner sep=0.75pt]  [font=\footnotesize]  {$u_{1}$};
\draw (371,123.4) node [anchor=north west][inner sep=0.75pt]  [font=\footnotesize]  {$u_{2}$};
\draw (393,123.4) node [anchor=north west][inner sep=0.75pt]  [font=\footnotesize]  {$u_{3}$};
\draw (421,123.4) node [anchor=north west][inner sep=0.75pt]  [font=\footnotesize]  {$u_{4}$};
\draw (250,130.4) node [anchor=north west][inner sep=0.75pt]    {$T$};

\end{tikzpicture}

    \caption{A tree $T$ and its rooted structure.}
    \label{fig:exampletree}
\end{figure}
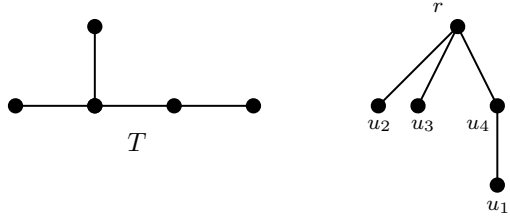

Note that $e_T(u_i)=3$ for $i\in \{1, 2, 3\}$.  
By Lemma \ref{lemma:503}, for each $1\leq i\leq 3$, we have that 

\begin{align*}
    \alpha_{bn}(T_{u_i, 0},(p_{in},q_{in}), (p_{out},  q_{out}))=\begin{cases}
 &0 \text{ if } q_{out}> p_{out}\\
 &\min\{3, g_{p_{in}, q_{in}}(0), p_{out} \} \text{ if } q_{out}\leq p_{out}
\end{cases}
\end{align*}

\begin{sloppypar}
\noindent over all choices for $p_{in}, q_{in}, p_{out},$ and $q_{out}$ such that $-5\leq p_{in}, p_{out}\leq 5$ and ${1\leq q_{in}, q_{out}\leq 5}$. 
Observe that since $T_{u_i,0}$ consists of only a single vertex, $u_i$ is broadcasting if and only if $q_{out}=p_{out}$. Then $\alpha_{bn}(T_{u_i, 0},(p_{in},q_{in}), (p_{out},  q_{out}))=0$ if $p_{in}\geq 0$, and  $\alpha_{bn}(T_{u_i, 0},(p_{in},q_{in}), (p_{out},  q_{out}))\in \{1, 2, 3\}$ if $p_{in}<0$. 

Otherwise, $q_{out}>p_{out}$, in which case $\alpha_{bn}(T_{u_i, 0},(p_{in},q_{in}), (p_{out},  q_{out}))=0$ as expected. 
\end{sloppypar}

Recall that by Lemma \ref{lemma:leaf_hears_f2}, given a tree $T$ and an $\alpha_{bn}$-broadcast $f$ on $T$, no leaf of $T$ hears $f$ from a non-leaf.

In particular, no vertex adjacent to a leaf belongs to $V_f^+$, hence the restriction of an $\alpha_{bn}$-broadcast $f$ to $T_{u_4, 0}=\{u_4\}$ has weight 0. By Lemma \ref{corollary:505}, we have that 
 \begin{align*}
     & \alpha_{bn}(T_{u_4, 1},(p_{in},q_{in}), (p_{out},  q_{out}))
     \\
    &=\max \{\alpha_{bn}(T_{u_4, 0}, (p_{in}^{(0)}, q_{in}^{(0)}),(p_{out}^{(0)}, q_{out}^{(0)})) 
    + \alpha_{bn}(T_{u_1, 0}, (p_{in}^{(1)}, q_{in}^{(1)}),(p_{out}^{(1)}, q_{out}^{(1)}))\}
\\
    &=\max \{\alpha_{bn}(T_{u_1, 0}, (p_{in}^{(1)}, q_{in}^{(1)}),(p_{out}^{(1)}, q_{out}^{(1)}))\}
\end{align*}

\noindent \begin{sloppypar}
 over all choices of $p_{\text {in }}^{(0)}, p_{\mathrm{out}}^{(0)}, p_{\text {in }}^{(1)}, p_{\mathrm{out}}^{(1)}, q_{\text {in }}^{(0)}, q_{\mathrm{out}}^{(0)}, q_{\text {in }}^{(1)}, q_{\mathrm{out}}^{(1)}$
with $-5 \leq p_{\mathrm{in}}^{(0)}, p_{\mathrm{out}}^{(0)}, p_{\mathrm{in}}^{(1)}, p_{\mathrm{out}}^{(1)} \leq 5$ and $1 \leq q_{\mathrm{in}}^{(0)}, q_{\mathrm{out}}^{(0)}, q_{\mathrm{in}}^{(1)}, q_{\mathrm{out}}^{(1)} \leq 5$ satisfying conditions $(iii), (iv),$ and $(v)$ of Lemma \ref{lemma504recursive}.  \end{sloppypar}

We may calculate $\alpha_{bn}(T_{r, 1},(p_{in},q_{in}), (p_{out},  q_{out}))$ as $\max \{\alpha_{bn}(T_{u_2, 0}, (p_{in}^{(1)}, q_{in}^{(1)}),(p_{out}^{(1)}, q_{out}^{(1)}))\}$ similarly. Thus, to determine $\alpha_{bn}(T_{r, 2},(p_{in},q_{in}), (p_{out}, q_{out})$, we need only consider the subtrees $T_{u_2, 0}$ and $T_{u_3, 0}$.  

Finally, by Lemmas \ref{lemma:502} and \ref{corollary:505}, we may calculate $\alpha_{bn}(T)$ as 
\begin{equation}
\begin{aligned}
    &\alpha_{bn}(T_{r, 3}, (-n, 1),(n, 1))\\
    &=\max \{\alpha_{bn}(T_{r, 2}, (p_{in}^{(0)}, q_{in}^{(0)}),(p_{out}^{(0)}, q_{out}^{(0)})) 
    + \alpha_{bn}(T_{u_4, 1}, (p_{in}^{(1)}, q_{in}^{(1)}),(p_{out}^{(1)}, q_{out}^{(1)}))\}
\end{aligned}
    \label{eq_sum_max}
\end{equation}
\noindent again over all choices of $p_{\text {in }}^{(0)}, p_{\mathrm{out}}^{(0)}, p_{\text {in }}^{(1)}, p_{\mathrm{out}}^{(1)}, q_{\text {in }}^{(0)}, q_{\mathrm{out}}^{(0)}, q_{\text {in }}^{(1)}, q_{\mathrm{out}}^{(1)}$
with $-5 \leq p_{\mathrm{in}}^{(0)}, p_{\mathrm{out}}^{(0)}, p_{\mathrm{in}}^{(1)}, p_{\mathrm{out}}^{(1)} \leq 5$ and $1 \leq q_{\mathrm{in}}^{(0)}, q_{\mathrm{out}}^{(0)}, q_{\mathrm{in}}^{(1)}, q_{\mathrm{out}}^{(1)} \leq 5$ satisfying 
Lemma \ref{lemma504recursive}.

\begin{figure}[H]
    \centering

\tikzset{every picture/.style={line width=0.75pt}} 

\begin{tikzpicture}[x=0.75pt,y=0.75pt,yscale=-1,xscale=1]

\draw    (210,100) -- (230,60) ;
\draw [shift={(230,60)}, rotate = 296.57] [color={rgb, 255:red, 0; green, 0; blue, 0 }  ][fill={rgb, 255:red, 0; green, 0; blue, 0 }  ][line width=0.75]      (0, 0) circle [x radius= 3.35, y radius= 3.35]   ;
\draw [shift={(210,100)}, rotate = 296.57] [color={rgb, 255:red, 0; green, 0; blue, 0 }  ][fill={rgb, 255:red, 0; green, 0; blue, 0 }  ][line width=0.75]      (0, 0) circle [x radius= 3.35, y radius= 3.35]   ;
\draw    (250,100) -- (230,60) -- (190,100) ;
\draw [shift={(190,100)}, rotate = 135] [color={rgb, 255:red, 0; green, 0; blue, 0 }  ][fill={rgb, 255:red, 0; green, 0; blue, 0 }  ][line width=0.75]      (0, 0) circle [x radius= 3.35, y radius= 3.35]   ;
\draw [shift={(250,100)}, rotate = 243.43] [color={rgb, 255:red, 0; green, 0; blue, 0 }  ][fill={rgb, 255:red, 0; green, 0; blue, 0 }  ][line width=0.75]      (0, 0) circle [x radius= 3.35, y radius= 3.35]   ;
\draw    (250,100) -- (250,140) ;
\draw [shift={(250,140)}, rotate = 90] [color={rgb, 255:red, 0; green, 0; blue, 0 }  ][fill={rgb, 255:red, 0; green, 0; blue, 0 }  ][line width=0.75]      (0, 0) circle [x radius= 3.35, y radius= 3.35]   ;
\draw  [dash pattern={on 0.84pt off 2.51pt}] (180,40) -- (250,40) -- (221.29,104.82) -- (220,110) -- (180,110) -- cycle ;
\draw  [dash pattern={on 0.84pt off 2.51pt}] (238,90) -- (262,90) -- (262,150) -- (238,150) -- cycle ;

\draw (216,46.4) node [anchor=north west][inner sep=0.75pt]  [font=\footnotesize]  {$r$};
\draw (182,113.4) node [anchor=north west][inner sep=0.75pt]    {$T_{r,\ 2} \ $};
\draw (231,152.4) node [anchor=north west][inner sep=0.75pt]    {$T_{u_{4} ,\ 1} \ $};

\end{tikzpicture}

    \caption{The partition of $T$ considered in \ref{eq_sum_max}.}
    \label{fig:my_label}
\end{figure}
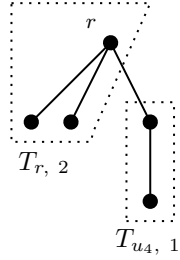

\begin{sloppypar}
Repetitive calculations show that this maximum is achieved when $\alpha_{bn}(T_{r, 2}, (p_{in}^{(0)}, q_{in}^{(0)}),(p_{out}^{(0)}, q_{out}^{(0)})) =2
$ and $ \alpha_{bn}(T_{u_4, 1}, (p_{in}^{(1)}, q_{in}^{(1)}),(p_{out}^{(1)}, q_{out}^{(1)}))=2$. Since $f(r)=f(u_4)=0$ under an $\alpha_{bn}$-broadcast $f$, we find that maximum boundary independence is achieved when $f(u_1)=2$ and $f(u_2)=f(u_3)=1$. Hence $\alpha_{bn}(T)=4.$
\end{sloppypar}

\section{Open problems}

It is unknown whether there exists a graph $G$ for which $\frac{i_h(G)}{i_{bn}(G)}=\frac{5}{4}$. As there exist graphs with $i_{bn}(G)=5$ and $i_h(G)=6$ (Figure \ref{fig:65counterexample}), a sharp upper bound  for $\frac{i_h(G)}{i_{bn}(G)}$ must lie between $\frac{6}{5}$ and $\frac{5}{4}$. 

\begin{problem}
Improve the bound $\frac{i_h(G)}{i_{bn}(G)}\leq \frac{5}{4}$, or show it is best possible for an infinite family of graphs.
\end{problem}

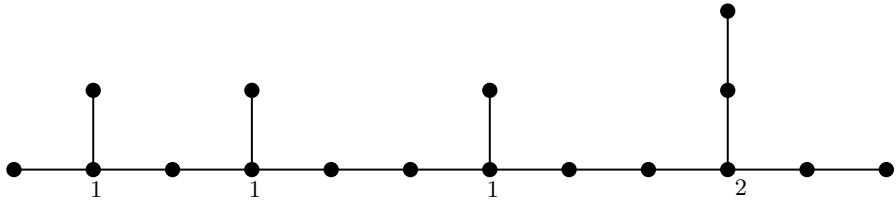
\begin{figure}[H]
    \centering

\tikzset{every picture/.style={line width=0.75pt}} 

\begin{tikzpicture}[x=0.75pt,y=0.75pt,yscale=-1,xscale=1]

\draw    (60,150) -- (100,150) ;
\draw [shift={(60,150)}, rotate = 0] [color={rgb, 255:red, 0; green, 0; blue, 0 }  ][fill={rgb, 255:red, 0; green, 0; blue, 0 }  ][line width=0.75]      (0, 0) circle [x radius= 3.35, y radius= 3.35]   ;
\draw    (100,150) -- (140,150) ;
\draw [shift={(100,150)}, rotate = 0] [color={rgb, 255:red, 0; green, 0; blue, 0 }  ][fill={rgb, 255:red, 0; green, 0; blue, 0 }  ][line width=0.75]      (0, 0) circle [x radius= 3.35, y radius= 3.35]   ;
\draw    (260,150) -- (300,150) ;
\draw [shift={(260,150)}, rotate = 0] [color={rgb, 255:red, 0; green, 0; blue, 0 }  ][fill={rgb, 255:red, 0; green, 0; blue, 0 }  ][line width=0.75]      (0, 0) circle [x radius= 3.35, y radius= 3.35]   ;
\draw    (220,150) -- (260,150) ;
\draw [shift={(220,150)}, rotate = 0] [color={rgb, 255:red, 0; green, 0; blue, 0 }  ][fill={rgb, 255:red, 0; green, 0; blue, 0 }  ][line width=0.75]      (0, 0) circle [x radius= 3.35, y radius= 3.35]   ;
\draw    (180,150) -- (220,150) ;
\draw [shift={(180,150)}, rotate = 0] [color={rgb, 255:red, 0; green, 0; blue, 0 }  ][fill={rgb, 255:red, 0; green, 0; blue, 0 }  ][line width=0.75]      (0, 0) circle [x radius= 3.35, y radius= 3.35]   ;
\draw    (140,150) -- (180,150) ;
\draw [shift={(140,150)}, rotate = 0] [color={rgb, 255:red, 0; green, 0; blue, 0 }  ][fill={rgb, 255:red, 0; green, 0; blue, 0 }  ][line width=0.75]      (0, 0) circle [x radius= 3.35, y radius= 3.35]   ;
\draw    (300,150) -- (340,150) ;
\draw [shift={(300,150)}, rotate = 0] [color={rgb, 255:red, 0; green, 0; blue, 0 }  ][fill={rgb, 255:red, 0; green, 0; blue, 0 }  ][line width=0.75]      (0, 0) circle [x radius= 3.35, y radius= 3.35]   ;
\draw    (340,150) -- (380,150) ;
\draw [shift={(380,150)}, rotate = 0] [color={rgb, 255:red, 0; green, 0; blue, 0 }  ][fill={rgb, 255:red, 0; green, 0; blue, 0 }  ][line width=0.75]      (0, 0) circle [x radius= 3.35, y radius= 3.35]   ;
\draw [shift={(340,150)}, rotate = 0] [color={rgb, 255:red, 0; green, 0; blue, 0 }  ][fill={rgb, 255:red, 0; green, 0; blue, 0 }  ][line width=0.75]      (0, 0) circle [x radius= 3.35, y radius= 3.35]   ;
\draw    (300,110) -- (300,150) ;
\draw [shift={(300,110)}, rotate = 90] [color={rgb, 255:red, 0; green, 0; blue, 0 }  ][fill={rgb, 255:red, 0; green, 0; blue, 0 }  ][line width=0.75]      (0, 0) circle [x radius= 3.35, y radius= 3.35]   ;
\draw    (100,110) -- (100,150) ;
\draw [shift={(100,110)}, rotate = 90] [color={rgb, 255:red, 0; green, 0; blue, 0 }  ][fill={rgb, 255:red, 0; green, 0; blue, 0 }  ][line width=0.75]      (0, 0) circle [x radius= 3.35, y radius= 3.35]   ;
\draw    (180,110) -- (180,150) ;
\draw [shift={(180,110)}, rotate = 90] [color={rgb, 255:red, 0; green, 0; blue, 0 }  ][fill={rgb, 255:red, 0; green, 0; blue, 0 }  ][line width=0.75]      (0, 0) circle [x radius= 3.35, y radius= 3.35]   ;
\draw    (420,150) -- (380,150) ;
\draw [shift={(420,150)}, rotate = 180] [color={rgb, 255:red, 0; green, 0; blue, 0 }  ][fill={rgb, 255:red, 0; green, 0; blue, 0 }  ][line width=0.75]      (0, 0) circle [x radius= 3.35, y radius= 3.35]   ;
\draw    (460,150) -- (420,150) ;
\draw [shift={(460,150)}, rotate = 180] [color={rgb, 255:red, 0; green, 0; blue, 0 }  ][fill={rgb, 255:red, 0; green, 0; blue, 0 }  ][line width=0.75]      (0, 0) circle [x radius= 3.35, y radius= 3.35]   ;
\draw    (420,110) -- (420,150) ;
\draw [shift={(420,110)}, rotate = 90] [color={rgb, 255:red, 0; green, 0; blue, 0 }  ][fill={rgb, 255:red, 0; green, 0; blue, 0 }  ][line width=0.75]      (0, 0) circle [x radius= 3.35, y radius= 3.35]   ;
\draw    (420,70) -- (420,110) ;
\draw [shift={(420,70)}, rotate = 90] [color={rgb, 255:red, 0; green, 0; blue, 0 }  ][fill={rgb, 255:red, 0; green, 0; blue, 0 }  ][line width=0.75]      (0, 0) circle [x radius= 3.35, y radius= 3.35]   ;
\draw    (500,150) -- (460,150) ;
\draw [shift={(500,150)}, rotate = 180] [color={rgb, 255:red, 0; green, 0; blue, 0 }  ][fill={rgb, 255:red, 0; green, 0; blue, 0 }  ][line width=0.75]      (0, 0) circle [x radius= 3.35, y radius= 3.35]   ;

\draw (422,153.4) node [anchor=north west][inner sep=0.75pt]  [font=\small]  {$2$};
\draw (97,154.4) node [anchor=north west][inner sep=0.75pt]  [font=\small]  {$1$};
\draw (177,154.4) node [anchor=north west][inner sep=0.75pt]  [font=\small]  {$1$};
\draw (297,154.4) node [anchor=north west][inner sep=0.75pt]  [font=\small]  {$1$};

\end{tikzpicture}

    \caption{A tree with $i_{bn}(T)=5$ and $i_h(T)=\text{rad}(T)=6$.}
    \label{fig:65counterexample}
\end{figure}

Recall that $\gamma_b(G)$ denotes the minimum cost of a dominating broadcast on $G$. 
Neilson showed in \cite{neilsonphd} that $\gamma_b(G)\leq i_{bn}(G)$ for all graphs $G$.
Since $i_{bn}(G)\leq i_h(G)\leq \text{rad}(G)$, we have that $i_{bn}(G)=i_h(G)$ for all $G$ such that $\gamma_b(G)=\text{rad}(G)$, known as \textit{radial} graphs. Radial trees were characterized by Herke and Mynhardt  in \cite{herke}.

Let $f$ be an $i_{bn}$-broadcast on a graph $G$.
If $|V_f^+|=1$, or if $V_f^+=V_f^1$, then $i_h(G)=i_{bn}(G)$ and $f$ is an $i_h$-broadcast on $G$. In particular, $i_{bn}(G)=i_h(G)$ for 
paths and cycles. Equality also holds for graphs $G$ such that $\text{rad}(G)\leq 5$, for if $f$ is an $i_{bn}$-broadcast with $\sigma(f)\leq 4$, then $G-U_f^E$ consists of either a single component (in which case $\sigma(f)=\text{rad}(G)$), or two components, each with two broadcasts of strength 1. It would be of interest to further classify graphs for which these parameters are equal. 

\begin{question}
    For which graphs $G$ is $i_{bn}(G)=i_{h}(G)$?
\end{question}

Trees with exactly one branch vertex are known as \textit{generalized spiders}.
The generalized spider $S=S(n_1, n_2, ..., n_k)$ consists of a branch vertex $b$ of degree $k$, and $k\geq 3$ paths or `legs' $L_1, ..., L_k$, each with one endpoint at $b$, such that $\ell(L_i)=n_i$ for all $1\leq i\leq k$.
It was shown in \cite{thesis} that generalized spiders satisfy the equality in Question 10.2.

\begin{theorem} \textup{\cite{thesis}}
Let $S=S(n_1, n_2, ..., n_k)$ be a generalized spider.
Then $i_{bn}(S)=i_h(S)$.
\end{theorem}

However, a closed formula to determine the exact values of these parameters remains unknown.

\begin{problem}
Determine $i_{bn}(S)$ for all generalized spiders $S$.
\end{problem}

The problem of determining $\gamma(G)$ for a given graph $G$ is known to be NP-complete \cite{garey}. In \cite{heggernes}, however, Heggernes and Lokshtanov showed that the minimum broadcast domination problem is solvable in polynomial time for all graphs. 

\begin{problem}
Study the complexity of determining $i_{bn}(G)$ and $i_h(G)$ for trees or other graph classes. 
\end{problem}

In \cite{neilsonphd}, Neilson determined that $\alpha_{bn}(T)\leq n-|B_T|+|R_T|$ for all trees with at least one branch vertex, which we improved to $\alpha_{bn}(C)\leq n-|B_C|+\alpha(C[R_C])$ for caterpillars.  It is unknown whether this result can be extended to all trees. 

\begin{question}
For any tree $T$ of order $n$ and at least one branch vertex, is it true that $\alpha_{bn}(T)\leq n-|B_T|+\alpha(C[R_T])$? 
\end{question}

In particular, we found that equality holds for caterpillars $C$ with no trunks, as well as caterpillars with no branches of degree 3 and no two adjacent trunks, when $|V(C)|\geq 3$.

\begin{problem}
    Characterize trees $T$ such that $\alpha_{bn}(T)= n-|B_T|+\alpha(C[R_T])$.
\end{problem}

\begin{problem}
Fine a closed formula to determine $\alpha_{bn}(C)$ exactly for caterpillars or other classes of trees. 
\end{problem}

In Section 9, we showed that the maximum bn-independence problem is solvable in polynomial time on all trees. For further research, more efficient algorithms may be possible when considering the additional constraints of boundary independence compared to hearing independence. 
 
\begin{problem}
Improve the running time in Theorem \ref{theorem:on9}, or show it is best possible. 
\end{problem}

\newpage

\end{document}